\newcommand {\comment}[1]{}
\def    \N  {{\mathbb N}}
\def    \Z  {{\mathbb Z}}
\def    \R  {{\mathbb R}}
\def    \C  {{\mathbb C}}
\def    \CP {{\mathbb C}{\mathbb P}}
\def    \tCP {{\mathbb C}{\mathbb P}{^2}}
\def    \oCP {{\mathbb C}{\mathbb P}{^1}}
\def    \reg   {{reg}}
\def    \gen   {{sp}}
\def    \half   {{\frac{1}{2}}}
\def    \s {{\mathcal{S}}}
\def    \M {{\mathcal{M}}}
\def    \z {{\mathcal{Z}}}
\def    \F  {{\mathcal{F}}}
\def    \H  {{\mathcal{H}}}
\def    \J  {{\mathcal{J}}}
\def    \B  {{\mathcal{B}}}
\def    \E  {{\mathcal{E}}}
\def   \eoe  {{\unskip\ \hglue0mm\hfill$\between$\smallskip\goodbreak}}
\DeclareMathOperator {\Id} {Id}
\DeclareMathOperator {\id} {Id}
\DeclareMathOperator {\pr} {pr}
\DeclareMathOperator {\cl} {cl}
\DeclareMathOperator {\FS} {FS}
\DeclareMathOperator {\Striv} {Striv}
\DeclareMathOperator {\pt} {pt}
\newtheorem{Lemma}{Lemma}
\newtheorem{Theorem}[Lemma]{Theorem}
\newtheorem{Proposition}[Lemma]{Proposition}
\newtheorem{Corollary}[Lemma]{Corollary}
\newtheorem{Claim}[Lemma]{Claim}
\newtheorem {Definition}[Lemma]{Definition}
\theoremstyle{remark} 
\newtheorem{Remark}[Lemma]{Remark}
\newtheorem{Example}[Lemma]{Example}
\newtheorem{noTitle}[Lemma]{}
\numberwithin{Lemma}{section}
\begin{document}
\title[Holomorphic shadows in the eyes of model theory]{Holomorphic shadows in the eyes of model theory}
\author{Liat Kessler}
\address{M.I.T. Department of Mathematics, Cambridge MA 02139, U.S.A}
\email{kessler@math.mit.edu}

\begin{abstract}

We define a subset of an almost complex manifold $(M,J)$ to be a \emph{holomorphic shadow} if it is the image of
a J-holomorphic map from a compact complex manifold. 
Notice that a J-holomorphic curve is a holomorphic shadow, and so is a complex subvariety of a compact complex manifold.

We show that under some conditions on an almost complex structure $J$ on a manifold $M$,
the holomorphic shadows in the Cartesian products of $(M,J)$  
form a Zariski-type structure. 
Checking this leads to non-trivial geometric questions and results. 
We then apply the work of Hrushovski and Zilber on Zariski-type structures. 

We also restate results of Gromov and McDuff on $J$-holomorphic curves in symplectic geometry in the language of shadows structures. 
\end{abstract}

\maketitle

\section{Introduction}

An almost complex manifold $(M,J)$ is a manifold $M$ with a complex structure $J$
on the fibers of the tangent bundle $TM$. 
A smooth ($C^{\infty}$) map is J-holomorphic if for every point in the domain the differential is a 
complex linear map between the tangent spaces. In this paper we study almost complex manifolds and J-holomorphic  maps using the framework of Zariski-type structures form model theory.

Zariski-type structures were
introduced and studied by Zilber and Hrushovski \cite{imam, imam1}, \cite{zilber}, \cite{ezilber} in their study of strongly minimal sets.
A \emph{Zariski-type structure} is a set $M$ with a collection 
of compatible Noetherian 
topologies, one on each $M^{n}$ for $n \in \N$, 
with an assignment of  \emph{dimension} to the closed sets, 
satisfying certain conditions that are reasonable to require if we think of the closed sets as subvarieties.
A topology is called \emph{Noetherian} if the Descending Chain Condition holds for closed sets; by \emph{compatible} we mean that the coordinate projections are continuous and closed.
Zilber showed that such a structure admits elimination of 
quantifiers, which is essential in applications of abstract model theory in concrete areas of mathematics.
 A motivating example is given by taking the complex subvarieties of Cartesian products of a 
compact complex manifold to be the closed sets and the dimension to be the complex dimension. 

One motivation to our study is to give a good definition for an almost complex subvariety; namely, a holomorphic shadow. The interpretation of ``good' is according to the axioms of a Zariski-type structure. Then we can apply results from model theory to characterize an almost complex manifold by properties of the structure of holomorphic shadows in its Cartesian products.

In Section \ref{defalmost} we give the necessary background on almost complex manifolds and J-holomorphic maps.
 We denote   by $\J_{\gen}$ the set of almost complex structures $J$ on $M$ such that  there are no $J$-holomorphic maps from complex manifolds to $(M,J)$ except for curves and constant maps  (see \eqref{jgen}). 
We define a holomorphic shadow and the holomorphic shadows structure. In Section \ref{zariski} we present the axioms of a Zariski-type structure, as defined by \cite{imam}, \cite{zbook}, and prove that for $J \in {\J}_{\gen}$ the holomorphic shadows in Cartesian products of $(M,J)$ form a Zariski-type structure.
At the end of Section \ref{zariski} we give an immediate application of the work of Hrushovski and Zilber to our case.

Another goal of our study is to restate results  from Gromov's theory of J-holomorphic curves in symplectic manifolds in the language of shadows structures, as we do in
Section \ref{sympl}.


\section{Almost complex manifolds and holomorphic shadows} \label{defalmost}

\subsection*{Almost complex manifolds and maps}
An \emph{almost complex structure} on a $2n$-dimensional manifold $X$ is an
automorphism of the tangent bundle, $J \colon TX \to TX$,
such that $J^2 = -\id$. The pair
$(X,J)$ is called an \emph{almost complex manifold}. 
An almost complex structure is \emph{integrable} if it is induced from a complex 
manifold structure.
In dimension two any almost complex manifold is integrable (see, e.g., \cite[Theorem 4.16]{MS:intro}). 
In higher dimensions this is not true \cite{calabi}. 
A submanifold $Y$ of $X$ is called an \emph{almost complex submanifold} if $J TY = TY$. 
We denote by $\J(X)$ the space of all almost complex structures on $X$ with the $C^{\infty}$ topology.

A smooth ($C^{\infty}$) map $f \colon X_{1}\rightarrow X_{2}$ is called \emph{J-holomorphic} if for all $p \in X_{1}$ the differential $df_p \colon T_{p}(X_{1})\rightarrow T_{f(p)}(X_{2})$ is a 
complex linear map, i.e.,
\[df_{p}\circ {J_{1}}_{p} = {J_{2}}_{f(p)}\circ df_{p}.\]
This coincides with the Cauchy Riemann equations if $(X_{1},J_{1})$ and $(X_{2},J_{2})$ are complex manifolds.  
The equation for holomorphic maps between two almost-complex manifolds becomes overdetermined as soon as the complex dimension of
the domain exceeds one, 
so for a generic almost complex structure $J$ on a manifold $X$, there should not be any almost complex submanifolds of complex dimension strictly greater than one. 
We denote by 
\begin{equation} \label{jgen}
\J_{\gen}
\end{equation}
 the subset of almost complex structures such that for every $J$-holomorphic map from a holomorphic disc $\C^k$ to $(X,J)$ there is a neighbourhood of $0$ in $\C^k$ such that the map factors through $\C^k \to \C$.

When the domain of a J-holomorphic map is a compact Riemann surface (i.e., a compact one-dimensional complex manifold), we call the map a \emph{parameterized J-holomorphic curve}, its image is called  a \emph{J-holomorphic curve}.
 When the domain is $\CP^1$, with the standard complex structure, the map is a 
 \emph{parametrized J-holomorphic sphere}. A J-holomorphic map is called \emph{simple} if it cannot be factored through a branched 
covering of the domain. 
In general, a J-holomorphic curve cannot be represented 
as the common zeroes of J-holomorphic functions into $\C$, not even locally. This makes the notion of an 
almost complex variety tricky.

\begin{noTitle} \label{du}
Let $(X,J)$ be an almost complex manifold.
Let  $$\Sigma_k=(\Sigma_k,j)$$ be a compact complex manifold of complex dimension $k$. The J-holomorphic maps from $(\Sigma_k,j)$ to $(X,J)$ are the maps satisfying 
$$\bar{\partial}_{J}(u)=0,$$ where $$\bar{\partial}_J(u) := \half (du + J \circ du \circ j).$$
Let $A \in H_{2k}(X;\Z)$ be a homology class.
The $\bar{\partial}_J$ operator defines a section
$S \colon \B \to \E$ by
$$ S(u):=(u,\bar{\partial}_J(u)) $$
where $\B \subset C^{\infty}(\Sigma_k,X)$ denotes the space of all smooth maps $u \colon \Sigma_k \to X$ that represent the homology class $A$, and the bundle $\E \to \B$ is the infinite dimensional vector bundle whose fiber at $u$ is the space $\E_u = \Omega^{0,1}(\Sigma_k,u^{*}TX)$ of smooth $J$-antilinear $1$-forms on $\Sigma_k$ with values in $u^{*}TX$. 
The moduli space 
$$ \M(A, \Sigma_k ,J)=\{ u  \ \mid \  u \text{ is a }(j,J)\text{-holomorphic map }\Sigma_k \to X \text{ in }A \} $$ is the zero set of this section. 
Denote by
\begin{equation} \label{odu}
D_u=DS(u) \colon \Omega^{0}(\Sigma_k,u^{*}TX) \to \Omega^{0,1}(\Sigma_k,u^{*}TX)
\end{equation}
the composition of the differential $dS(u) \colon T_{u}{\B} \to T_{(u,0)}{\E}$ with the projection $\pi_u \colon T_{(u,0)}=T_{u}{\B} \oplus {\E}_u \to {\E}_u$.
The operator $D_u$ is the \emph{vertical differential} of the section $S$ at $u$.

If $k=1$, then $D_u$ is a real linear Cauchy Riemann operator.
When $k >1$, the image of the map \eqref{odu} is of infinite codimension.

Consider the universal moduli space
$$ \M(A,\Sigma_k,\J)=\{(u,J) \ \mid \ J \in \J, \ u \text{ is a }(j,J)\text{-holomorphic map }\Sigma_k \to X \text{ in }A \} .$$
Here $\J$ is an open subset of $\J(X)$. When $X$ has a symplectic form $\omega$ (see Section \ref{sympl}), we can take $\J$ to be the space of all 
$\omega$-tame almost complex structures.  

Consider the projection map
$$p_{A} \colon \M(A,\Sigma_k,\J) \to \J.$$
The differential $dp_A$ at a point $(u,J)$ is essentially the operator $D_u$, and is surjective at $(u,J)$ when $D_u$ is onto. 

We have the following consequence of the 
Sard-Smale theorem,
the infinite dimensional implicit function theorem,
and the ellipticity of the Cauchy-Riemann equations.
When $k=1$, 
the set $\J_{\reg}(A)$ of regular values for $p_A$ is of the second category in $\J$; for any $J \in \J_{\reg}(A)$, the space of simple 
$J$-holomorphic $\Sigma_1$-curves in $A$ is a smooth manifold of dimension $2c_1(A)+n(2-2g)$, where $c_1$ is the first Chern class of the complex vector bundle $(TX,J)$, 
$2n$ is the dimension of $X$, and $g$ is the genus of $\Sigma_1$
 \cite[Theorem 3.1.5]{MS:JCurves}. 
When $k > 1$, for a generic $J$ the space 
$\M(A,\Sigma_k,J)$ is empty.
\end{noTitle}

\subsection*{Holomorphic shadows}
  
      \begin{Definition}
     A subset of an almost complex manifold $(X,J)$ is a \emph{holomorphic shadow} \footnote{We follow an attempt of Hardt \cite{ha2} to give the name \emph{semianalytic shadows} 
to subanalytic sets} 
    if it is the image of a J-holomorphic map from a compact complex analytic manifold.
    \end{Definition}
    
\begin{Remark}    
As a compact set in a Hausdorff space, each holomorphic shadow is closed in the $C^{\infty}$ topology on $X$.
\end{Remark}

\begin{noTitle} \label{rest}
For a complex analytic subvariety $V$ of a complex analytic manifold $(M,J_M)$, i.e., a subset given locally as the common zeros of a 
finite collection of holomorphic functions,
we say that 
a map $f \colon V \rightarrow X$ is J-holomorphic if for one (hence every) resolution 
of $V$ to a complex analytic manifold $\tilde{V}$, 
${\phi} \colon \tilde{V} \rightarrow V$, the map
$f \circ {\phi}$ is a proper J-holomorphic map from the complex analytic 
manifold $\tilde{V}$. 
\end{noTitle}

By \cite{hiro}, every complex analytic subvariety admits a resolution of singularities, i.e.,  a map ${\phi} \colon \tilde{V} \rightarrow V$, such that $\tilde{V}$ is a complex analytic manifold, the preimage of the nonsingular points of $V$ is a dense subset in $\tilde{V}$ on which $\phi$ is an isomorphism, 
and $\phi$ is a proper map, (in particular, if $V$ is compact so is $\tilde{V}$). 
On the other hand, by the proper mapping theorem, 
 an image of a 
complex analytic subvariety 
by a proper holomorphic map is a complex analytic subvariety. As a result we get the following claim.

\begin{Claim} \label{subim} 
A subset of a compact complex analytic manifold is a complex analytic subvariety if and only if it is a  holomorphic shadow.
\end{Claim}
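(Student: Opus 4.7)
The plan is to verify the two implications separately, using respectively Hironaka's resolution theorem and Remmert's proper mapping theorem, both of which are recalled in the paragraph immediately preceding the claim; so the argument is essentially assembling those two ingredients.

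For the direction ``subvariety $\Rightarrow$ shadow'', let $V$ be a complex analytic subvariety of a compact complex manifold $(X,J_X)$. Then $V$ is closed in $X$, hence compact. By \cite{hiro} there is a proper holomorphic map $\phi \colon \tilde V \to V$ from a complex analytic manifold $\tilde V$; properness together with compactness of $V$ forces $\tilde V$ to be compact. The composition $\iota \circ \phi \colon \tilde V \to X$ with the inclusion $\iota \colon V \hookrightarrow X$ is then a holomorphic map from a compact complex analytic manifold with image exactly $V$, exhibiting $V$ as a holomorphic shadow.

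For the direction ``shadow $\Rightarrow$ subvariety'', let $S \subseteq X$ be a holomorphic shadow, so $S = f(\Sigma)$ for some compact complex analytic manifold $\Sigma$ and some $J_X$-holomorphic map $f \colon \Sigma \to X$. Since $\Sigma$ is compact, $f$ is automatically proper. A compact complex manifold is in particular a complex analytic subvariety of itself, so the proper mapping theorem cited above applies and yields that $S = f(\Sigma)$ is a complex analytic subvariety of $X$.

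I do not expect a real obstacle: both nontrivial theorems are imported as black boxes. The only small care needed is the convention from \ref{rest}, where holomorphicity of a map defined on a singular variety is declared via a resolution; this is harmless here because in the first direction we exhibit $V$ as a shadow using a genuine compact complex manifold $\tilde V$, and in the second direction the domain $\Sigma$ is already smooth. Compactness of $X$ itself is actually not used beyond ensuring that subvarieties of $X$ are compact, which is what makes the resolution step produce a compact $\tilde V$.
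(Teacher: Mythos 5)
Your proof is correct and follows exactly the route the paper itself takes: the paragraph preceding the claim deduces it from Hironaka's resolution of singularities (for the ``subvariety $\Rightarrow$ shadow'' direction, with the same observation that properness of the resolution forces $\tilde V$ to be compact) and Remmert's proper mapping theorem (for the converse). No gaps; your additional remark about the convention for maps on singular varieties is a harmless clarification consistent with the paper's setup.
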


For a compact complex analytic manifold $M$, taking the complex analytic subvarieties of $M^n$, $n \in \N$, to be the closed subsets and the dimension to be the complex dimension gives a Zariski-type structure. This follows from standard facts in complex geometry, as observed by B.\ Zilber \cite{imam1}. 
We show a similar claim in the non-integrable case.

\begin{Definition}\label{str}
Given an almost complex $2{r}$-manifold $(X,J)$ and a collection $\H$ of holomorphic shadows in the finite Cartesian products of $(X,J)$,
we consider the collection of:
 \begin{itemize}
  \item the holomorphic shadows in $\H$,
  \item the diagonals, i.e., sets of the form
     $${{\Delta}^{n}}_{(i_{1},\ldots,i_{k})}= \{\bar{x} \in X^{n} \mid x_{i_{1}}= \ldots = x_{i_{k}}\},$$
  \item subsets of $X^n$ of the form $S \times D_1 \times \ldots  \times D_k$, where $S \in \H$ is a
        shadow in $X^l$, each $D_i$ is a diagonal in $X^{d_i}$, and $\sum_{i=1}^{k}{d_i}=n-l$.
  \item the images of sets as above under permutations of the coordinates,
  \item finite unions of the above sets.
\end{itemize}
We denote this collection ${\s}_{(X,J,\H)}$. 
 \end{Definition}

{\bf Notation}
when $\H$ is the collection of all holomorphic shadows in the finite Cartesian products of $(X,J)$, we write ${\s}_{(X,J)}$ for
${\s}_{(X,J,\H)}$. We call  ${\s}_{(X,J)}$ the \emph{holomorphic shadows structure}.

The holomorphic shadows  structure admits a natural (partial) dimension function:
\begin{itemize}
  \item the dimension of a point is $0$;
  \item the dimension of a non-constant J-holomorphic curve is $1$;
 \end{itemize}

\begin{Theorem} \label{genz}
Let $(X,J)$ be an   almost complex manifold. 
Assume that $J \in \J_{\gen}$. Then there exists a dimension function $\dim$ on ${\s}_{(X,J)}$ that is consistent with the natural partial
 dimension above, such that 
$(X,{\s}_{(X,J)},\dim)$ is a Zariski-type structure that satisfies 
the essential uncountability (EU) property. 
\end{Theorem}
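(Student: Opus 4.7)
The plan is to verify the axioms of a Zariski-type structure from \cite{imam} and \cite{zbook} in turn, exploiting the rigidity afforded by $J \in \J_{\gen}$. Under this hypothesis every J-holomorphic map $f \colon \Sigma \to X$ from a positive dimensional compact complex manifold has one-dimensional image: locally the map factors through $\C^k \to \C$, so its image lies in a J-holomorphic curve, and by compactness the global image is a finite union of such curves. Consequently shadows in $X$ are exhausted by points, J-holomorphic curves, and their finite unions; a shadow in $X^n$ is the image of a J-holomorphic map whose coordinate components are curves or points, and is contained in a product of such curves.

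The first main step is to show that $\s_{(X,J)}$ is the collection of closed sets of a Noetherian topology on each $X^n$. Closure under finite unions and the presence of $\emptyset$ and $X^n$ (as the trivial diagonal $\Delta^n_{(1)}$) are built in. Finite intersections split into cases: two diagonals combine into a bigger diagonal; the intersection of a shadow $f(\Sigma)$ with a diagonal $\{x_i = x_j\}$ is the image under $f$ of the preimage $(f_i, f_j)^{-1}(\Delta_X)$, a proper complex-analytic subvariety of $\Sigma$ (since $\Delta_X \subset X \times X$ is an almost-complex submanifold and is J-pulled back to an analytic subset of the complex source), hence again the source of a shadow; intersections of two shadows, reduced to intersections of products of curves under $\J_{\gen}$, are finite unions of lower-dimensional shadows by finiteness of intersection of distinct irreducible J-holomorphic curves. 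DCC then follows by induction on dimension together with the finiteness of irreducible components of any resolution $\tilde{\Sigma}$. Coordinate projections $\pi \colon X^{m+n} \to X^n$ are closed because $\pi(f(\Sigma)) = (\pi \circ f)(\Sigma)$ is again a shadow, and continuous because the preimage of a shadow $S$ is, up to a permutation of coordinates, $S \times X^m$, the product of a shadow with a trivial diagonal.

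Next I extend the partial dimension: for an irreducible shadow $S = f(\Sigma)$ with resolution $\phi \colon \tilde{\Sigma} \to \Sigma$, set $\dim S := \dim_\C \tilde{\Sigma} - \dim_\C (f \circ \phi)^{-1}(x)$ for a generic $x \in S$, extending to unions by maximum and to products by sum. This is consistent with the natural partial dimension (points get $0$, non-constant curve images get $1$). The remaining Hrushovski–Zilber axioms, in particular additivity under projection, the fiber-dimension inequality $\dim(\pi^{-1}(y) \cap S) \geq \dim S - \dim \pi(S)$, and strict drop of dimension on proper irreducible closed subsets, then reduce to their classical complex-analytic counterparts applied to $\tilde{\Sigma}$ and its subvarieties. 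For essential uncountability, suppose an irreducible closed $S = f(\Sigma)$ were $\bigcup_n S_n$ with each $S_n$ proper closed. Pulling back would give $\tilde{\Sigma} = \bigcup_n (f \circ \phi)^{-1}(S_n)$, a countable union of proper complex-analytic subvarieties of the compact complex manifold $\tilde{\Sigma}$; Baire category on $\tilde{\Sigma}$ forces some preimage to have nonempty interior, contradicting that it is a proper analytic subset of the connected complex manifold $\tilde{\Sigma}$.

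The main obstacle I expect is the finite-intersection closure and Noetherianity of the first step, which carry the geometric content of the theorem without the benefit of the algebraic finiteness underlying the classical Zariski topology. The essential inputs are the $\J_{\gen}$-reduction of shadows to products of curves, the finiteness of intersection of distinct irreducible J-holomorphic curves, and the fact that pullbacks of almost-complex submanifolds by J-holomorphic maps from complex manifolds are complex-analytic subvarieties of the source; once these are in hand the dimension theory and essential uncountability follow in relatively standard fashion from the complex-analytic geometry of the resolutions $\tilde{\Sigma}$.
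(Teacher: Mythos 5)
Your treatment of the topological axioms follows essentially the paper's own route: the reduction of shadows to images of maps into products of $J$-holomorphic curves under $J\in\J_{\gen}$, the pulling back of diagonals to analytic subvarieties of the source, the proper mapping theorem plus resolution of singularities for closure under intersection, and DCC and (EU) by transporting the corresponding statements to analytic subvarieties of the compact source (your Baire-category argument is the standard proof of the fact the paper cites). One looseness here: ``finiteness of intersection of distinct irreducible $J$-holomorphic curves'' does not by itself show that $f_1(M_1)\cap f_2(M_2)$ is a shadow, since two quite different shadows can sit inside the same product of curves; the argument actually needed is the one you list only as an input, namely that $(f_1\times f_2)^{-1}(\Delta_{X^n})$ is an analytic subvariety of $M_1\times M_2$ whose image under $f_1$ composed with the projection to $M_1$ (after resolving) is the intersection. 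That is Claim \ref{inc} in the paper.

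The genuine gap is in the dimension theory. You set $\dim S=\dim_\C\tilde\Sigma-\dim_\C(f\circ\phi)^{-1}(x)$ for generic $x$ and assert that (DI), (FC) and (ADF) ``reduce to their classical complex-analytic counterparts applied to $\tilde\Sigma$.'' Two things are missing. First, well-definedness: a shadow has many parametrizations, and you must show the generic fiber dimension is independent of the choice of $(M,f)$; the paper does this by producing a canonical object, the shadow caster, and proving any two agree on dense open sets (Claim \ref{alun}). Second, and more seriously, the reduction of (FC) and (ADF) is not automatic: the fibers $\pr^{-1}(a)\cap S$ are themselves shadows, not subvarieties of $\tilde\Sigma$, and their shadow-dimension is the dimension of $(f\circ\phi)^{-1}(\pr^{-1}(a))$ minus the generic fiber dimension of $f\circ\phi$ restricted to that set, which need not equal the global generic fiber dimension. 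The paper resolves this with Lemma \ref{greq}: using Moosa's theorem on complex equivalence relations (the quotient of $M$ by $m_1\sim m_2\Leftrightarrow f(m_1)=f(m_2)$), every shadow contains a dense umbra that is an honest complex manifold on which the parametrization is injective, together with Remark \ref{psi} showing the coordinate projections restrict to proper holomorphic maps between umbras; (FC) is then proved by a Jacobian-rank argument in umbra coordinates. Without a substitute for this step your appeal to the classical counterparts does not go through; this, rather than Noetherianity of the topology, is where the new idea of the proof lives.
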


  The set $\J_{\gen}$ is defined in \eqref{jgen}.
  We will give explicit definitions and prove the theorem in the next section.

\section{Zariski-type structures} \label{zariski}
A Zariski-type structure, or a Z-structure, as defined in \cite{imam}, is a set $X$ with a collection $\mathcal{C}$ of subsets of its Cartesian products, 
$X^n$, to be called 
\emph{Z-closed} sets, and a dimension assignment to $\mathcal(C)$, such that:
                \begin{itemize}
         \item (L1) The set $X$ is Z-closed;
         \item (L2) Each point is Z-closed;
         \item (L3) Cartesian products of Z-closed sets are Z-closed; 
         \item (L4) The diagonals are Z-closed;
         \item (L5) Finite unions and intersections of Z-closed sets are Z-closed;
         \item (P) Any of the coordinate projections 
                $$\pr_{i_1,\ldots,i_m} \colon (x_1,\ldots,x_n) \mapsto (x_{1_1},\ldots,x_{i_m}), \, i_1,\ldots, i_m \in \{1, \ldots,n \} $$ 
                   are
          closed and continuous, i.e., the images and inverse images
          of Z-closed sets under these projections are Z-closed;
                \item (DCC) Descending Chain Condition for Z-closed sets: 
          For any Z-closed
          \[C_{1} \supseteq C_{2} \supseteq \ldots \supseteq C_{i} \supseteq \ldots \]
          there is $k$ such that $C_{i} = C_{k}$ for all $i \geq k$. 
                \end{itemize}
          This condition implies that for any Z-closed $C$ there are
                                Z-closed $C_{1},\ldots,C_{m}$, that are distinct and  
                no one is a subset of the other, 
such that 
                                $C=C_{1} \cup \ldots \cup C_{m}$, where $m$ is maximal.
                     These $C_{i}$ are the \emph{irreducible components} of
                     $C$. They are defined up to permutation uniquely.
                   
A Z-closed set $S$ is called \emph{irreducible} if there are no Z-closed subsets $S_1,S_2 \subsetneq S$ such that $S=S_1 \cup S_2$.

 To any Z-closed subset $C$, there is attached a natural number, called $\dim C$,
                     such that:
         \begin{itemize}
                \item (DP) Dimension of a Point is $0$;
                \item (DU) Dimension of a Union: $\dim(C_{1} \cup C_{2}) = \max\{\dim C_{1},\dim C_{2}\};$
                \item (DI) $\dim C_{1} < \dim C$ for $C$ irreducible,
                           $C_{1} \subseteq C, C_{1} \neq C;$
   \item (FC) For any $k \in \N$, for any Z-closed $C \subset X^n$, and projection $pr \colon X^n \to X^m$, the set  
$$p(C,k)=\{a \mid \dim(C \cap {pr}^{-1}(a)) > k \}$$ is constructible.
 \item (ADF) For any irreducible Z-closed $C \subset X^n$ and projection $pr \colon X^n \to X^m$, 
    $$\dim C = \dim pr(C) + \min_{a\in pr(C)}\dim (pr^{-1}(a) \cap C).$$ 
    \end{itemize}
A \emph{constructible} set is a  
finite Boolean combination of Z-closed sets.

We will call these axioms the \emph{Z axioms}.

Other properties that will be relevant are the following:
        \begin{itemize}
         \item (EU) Essential uncountability: If a Z-closed $C \subseteq X^{n}$ is a union of countably
                 many Z-closed subsets, then there are finitely many among the subsets whose union is $C$.
      This implies that if $X$ is not finite it must be uncountable.
                \item (PS) Pre-smoothness: For any irreducible Z-closed $S_{1},S_{2} \subseteq X^{n}$,
                the dimension of any irreducible component of
     $S_{1} \cap S_{2}$ is no less than $\dim(S_{1}) + \dim (S_{2}) - \dim X^{n}.$
         \end{itemize}

In  the Zariski-type structure in which the Z-closed sets are the complex subvarieties of Cartesian products of a 
compact complex manifold, and the dimension is the complex analytic dimension, the properties (EU) and (PS) are satisfied \cite{imam}, \cite{zbook}.

 Zilber \cite{imam1}, \cite{zbook} showed that any Zariski-type structure admits elimination of quantifiers: the projection of a constructible set
is constructible.
     
\subsection*{Proof of Theorem \ref{genz}}

It follows from the definition of a J-holomorphic map that
\begin{Claim}
\begin{itemize}
\item J-holomorphic maps are closed under disjoint union,  Cartesian product and composition (when defined).
\item The canonical coordinate projections $ \pi \colon X^{n+k} \to X^n$ are J-holomorphic.
\end{itemize}
\end{Claim}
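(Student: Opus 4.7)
The plan is to verify each item directly from the defining intertwining relation $df_p \circ (J_1)_p = (J_2)_{f(p)} \circ df_p$, using the convention that a Cartesian power $X^n$ carries the product almost complex structure $J^{\oplus n}$; under this convention the tangent space at $(x_1,\dots,x_n)$ splits canonically as $\bigoplus_{i} T_{x_i} X$ and $J^{\oplus n}$ acts as $J$ on each summand. Once this is fixed, the four assertions reduce to pointwise linear-algebra checks that commute with the relevant block decompositions.

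For disjoint unions $f_1 \sqcup f_2 \colon (X_1 \sqcup X_2, J_1 \sqcup J_2) \to (Y, J_Y)$, the differential at any point coincides with either $df_1$ or $df_2$, so the intertwining relation holds pointwise. For Cartesian products of maps $f_i \colon (X_i,J_i) \to (Y_i,K_i)$, the differential of $f_1 \times f_2$ at $(p_1,p_2)$ decomposes as the block-diagonal operator $df_1 \oplus df_2$ relative to the splittings of source and target tangent spaces; since each block satisfies $df_i \circ J_i = K_i \circ df_i$, so does the direct sum relative to $J_1 \oplus J_2$ and $K_1 \oplus K_2$. Composition $g \circ f$ of J-holomorphic maps is handled by the chain rule: $d(g \circ f)_p \circ (J_1)_p = dg_{f(p)} \circ df_p \circ (J_1)_p = dg_{f(p)} \circ (J_2)_{f(p)} \circ df_p = (J_3)_{gf(p)} \circ d(g \circ f)_p$.

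For the coordinate projection $\pi \colon X^{n+k} \to X^n$, the differential at any point is the linear projection $\bigoplus_{i=1}^{n+k} T_{x_i}X \to \bigoplus_{i=1}^{n} T_{x_i}X$ dropping the last $k$ summands, which manifestly commutes with the diagonal action of $J$ on each side. Smoothness in all four cases is immediate from the smoothness of the constituent maps.

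I do not anticipate any real obstacle: every assertion is a direct pointwise consequence of the definition once the product almost complex structure is named. The only thing worth emphasizing in the write-up is that disjoint unions and Cartesian products of almost complex manifolds carry the obvious induced almost complex structures, so the claim is really a statement that J-holomorphicity is preserved under these canonical constructions.
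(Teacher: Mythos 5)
Your proof is correct and matches the paper's approach: the paper simply asserts that the claim "follows from the definition of a J-holomorphic map," and your pointwise verification of the intertwining relation under the product almost complex structure, the chain rule for compositions, and the block decomposition for projections is exactly the routine check being alluded to.
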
 

As a corollary we get the following claim.

\begin{Claim} \label{123}
\begin{enumerate}
\item A finite union of holomorphic shadows is a holomorphic shadow.\\
\item A finite Cartesian product of  holomorphic shadows is a holomorphic shadow. \\
\item The image of a  holomorphic shadow under a J-holomorphic map 
is a holomorphic shadow.\\
\item The image of a holomorphic shadow under the canonical coordinate projection $X^{n+k} \to X^n$ 
       is a holomorphic shadow.
\end{enumerate}
\end{Claim}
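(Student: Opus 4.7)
The plan is to derive each of the four assertions as a direct formal consequence of the preceding Claim together with the definition of a holomorphic shadow as the image of a J-holomorphic map from a compact complex analytic manifold. For each part I would write the shadows in question in this canonical form and then build a single J-holomorphic map off a compact complex manifold whose image is the desired set.

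For (1), suppose $S_1,\ldots,S_m$ are holomorphic shadows in $X^n$, with $S_i = f_i(V_i)$ and $f_i \colon V_i \to X^n$ a J-holomorphic map from a compact complex manifold $V_i$. The disjoint union $V = V_1 \sqcup \cdots \sqcup V_m$ of finitely many compact complex manifolds carries a natural compact complex manifold structure, and the induced map $f = f_1 \sqcup \cdots \sqcup f_m \colon V \to X^n$ is J-holomorphic by the first bullet of the preceding Claim. Its image is $\bigcup_{i=1}^m S_i$, which therefore is a holomorphic shadow. For (2), write $S_i = f_i(V_i)$ with $S_i \subseteq X^{n_i}$; the product $V_1 \times V_2$ is a compact complex manifold, and the product map $f_1 \times f_2 \colon V_1 \times V_2 \to X^{n_1} \times X^{n_2} = X^{n_1+n_2}$ is J-holomorphic by the Cartesian product closure in the preceding Claim, with image $S_1 \times S_2$; finitely many factors follow by induction.

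For (3), if $S = f(V)$ is a shadow with $f \colon V \to X^n$ J-holomorphic and $V$ compact complex, and $g$ is a J-holomorphic map defined on a domain containing the image of $f$, then $g \circ f$ is J-holomorphic by the composition closure in the preceding Claim, $V$ remains a compact complex manifold, and the image is $g(S)$. For (4), the canonical coordinate projection $\pi \colon X^{n+k} \to X^n$ is J-holomorphic by the second bullet of the preceding Claim, so (4) is the specialization of (3) to $g = \pi$.

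There is no genuine obstacle here: the essential content has already been absorbed into the preceding Claim on closure of the class of J-holomorphic maps under disjoint unions, Cartesian products, compositions, and projections. The only point to verify carefully is that the compact complex manifold structure on the constructed domain ($V_1 \sqcup V_2$ or $V_1 \times V_2$) is indeed the one with respect to which the assembled map is J-holomorphic, which is automatic from the corresponding structure on the pieces.
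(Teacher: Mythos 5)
Your proposal is correct and follows exactly the route the paper intends: the paper simply states Claim \ref{123} as a corollary of the preceding closure properties of J-holomorphic maps (disjoint union, Cartesian product, composition, and projections), and your write-up just makes that derivation explicit. Nothing is missing.
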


To continue, we show the following Lemma.

\begin{Lemma} \label{dgen}
Let $(X,J)$ be an  almost complex manifold. Assume 
that $J \in \J_{\gen}$. Then any J-holomorphic map $f \colon M_f \to X^{n(f)}$ from a compact complex manifold to a Cartesian product of $(X,J)$ satisfies the ``pulling back diagonals property'': the preimage of any diagonal $\Delta^{n(f)}_{i_1,\ldots,i_k}$ is a complex subvariety of $M_f$.
\end{Lemma}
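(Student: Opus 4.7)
The plan is to localize the problem and reduce each case to a complex-analytic statement about holomorphic maps, together with a standard fact about $J$-holomorphic curves. First I would observe that
$$\Delta^{n(f)}_{i_1,\ldots,i_k} \;=\; \bigcap_{j=2}^{k} \Delta^{n(f)}_{i_1,i_j},$$
so since finite intersections of complex subvarieties of $M_f$ are complex subvarieties, it suffices to handle $2$-fold diagonals. Equivalently, for any two $J$-holomorphic maps $u,v \colon M_f \to (X,J)$ I need to show that the locus $E := \{p \in M_f : u(p) = v(p)\}$ is a complex subvariety of $M_f$. This is a local condition, so I would fix $p_0 \in E$ with $u(p_0) = v(p_0) = q$ and identify a neighborhood of $p_0$ with an open set $U \subseteq \C^m$, where $m = \dim_\C M_f$.

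Applying the $\J_{\gen}$ hypothesis to $u$ and $v$, after possibly shrinking $U$, I can write $u = \psi_u \circ \alpha_u$ and $v = \psi_v \circ \alpha_v$, where $\alpha_u, \alpha_v \colon U \to \C$ are holomorphic and $\psi_u,\psi_v$ are $J$-holomorphic disks whose images $\Gamma_u,\Gamma_v$ pass through $q$. By unique continuation for $J$-holomorphic curves, the germs of $\Gamma_u$ and $\Gamma_v$ at $q$ either coincide, or their intersection near $q$ is isolated. In the latter ``transverse'' case, after further shrinking so that $\Gamma_u \cap \Gamma_v = \{q\}$, the identity $u(p) = v(p)$ forces $u(p) = v(p) = q$, so locally $E = u^{-1}(q) \cap v^{-1}(q)$. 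A single preimage $u^{-1}(q) = \alpha_u^{-1}(\psi_u^{-1}(q))$ is complex-analytic, because $\psi_u^{-1}(q)$ is either a discrete subset of the disk (by unique continuation, if $\psi_u \not\equiv q$) or the entire disk, and $\alpha_u$ is holomorphic; the intersection of two such subvarieties is again a complex subvariety.

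In the remaining ``coinciding germs'' case, I pick a simple $J$-holomorphic parameterization $\eta$ of the common germ and use the standard branched-cover factorization of a $J$-holomorphic disk through a simple model to write $\psi_u = \eta \circ m_u$ and $\psi_v = \eta \circ m_v$ with $m_u,m_v$ holomorphic. Setting $a := m_u \circ \alpha_u$ and $b := m_v \circ \alpha_v$, I then have $E = (a,b)^{-1}(Z)$ locally, where
$$Z \;:=\; \{(z,w) : \eta(z) = \eta(w)\}.$$
Since $\eta$ is simple, the discreteness of self-intersection pairs of a simple $J$-holomorphic curve (Proposition~2.5.1 of McDuff--Salamon) shows that $Z$ is the union of the diagonal and a discrete set, hence a complex subvariety of $\C \times \C$. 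Pulling back by the holomorphic map $(a,b) \colon U \to \C^2$ then exhibits $E$ locally as a complex subvariety of $U$, as required.

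The main obstacle is the coinciding-germs case: one must invoke the structure theory of simple $J$-holomorphic curves (branched-cover factorization through a simple parameterization and discreteness of self-intersection pairs) in order to realize $Z$ as a complex subvariety of $\C^2$. Once this is in hand, everything else is formal: the reduction to pairwise diagonals, the unique-continuation dichotomy for the two local curve germs, and the treatment of point fibers $u^{-1}(q)$ use only that holomorphic preimages of complex subvarieties are complex subvarieties.
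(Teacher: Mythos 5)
Your proof is correct, and it rests on the same analytic inputs as the paper's --- the intersection and self-intersection theory of $J$-holomorphic curves from McDuff--Salamon (Appendix E) and Micallef--White --- but it is organized differently: where the paper argues globally, you argue locally. The paper uses $J\in\J_{\gen}$ to assert that each coordinate image $\pr_j(f(M))$ is a point or a $J$-holomorphic curve $g_j(\Sigma^{(j)})$ with $\Sigma^{(j)}$ a compact Riemann surface, factors $f$ globally as $(\prod_j g_j)\circ h$ with $h\colon M\to\prod_j\Sigma^{(j)}$ holomorphic, normalizes the $g_j$ to be simple and pairwise equal or with distinct images, and then pulls back diagonals using the finiteness of the (self-)intersection sets of simple closed curves. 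You instead reduce to two-fold diagonals, invoke the factorization in the definition of $\J_{\gen}$ only near a point, and run the germ dichotomy together with the local factorization through a primitive disk. Since being a complex subvariety is a local condition, nothing is lost, and you entirely avoid the global decomposition $f=(\prod_j g_j)\circ h$, which the paper states rather tersely and which requires gluing the local factorizations into a holomorphic map to the normalization of each image curve; this is arguably the cleaner route given that the definition of $\J_{\gen}$ is itself local. Two minor points to tighten: the (self-)intersection statements you need for the primitive local model $\eta$ are the local ones (McDuff--Salamon Theorem E.1.2 and Exercise E.1.4, the same references the paper uses), not Proposition 2.5.1, which concerns closed curves; and before applying the germ dichotomy you should shrink the disk domain of $\psi_u$ so that it meets $\psi_u^{-1}(q)$ only at $\alpha_u(p_0)$ (unless $\psi_u\equiv q$), so that the image germ of $\Gamma_u$ at $q$ consists of a single branch.
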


\begin{proof}
We first show that the pulling back diagonals property holds for J-holomorphic maps of the form
\begin{equation} \label{form}
\prod_{j=1}^{n}{g_j} \colon \prod_{j=1}^{n}\Sigma^{(j)} \to X^n,
\end{equation}
 where the 
$\Sigma^{(j)}$-s are compact connected Riemann surfaces and the maps $g_j \colon \Sigma^{(j)} \to X$  are J-holomorphic and satisfy the following assumptions:
\begin{equation}  \label{assumption1}
g_j \text{ is simple } \forall j,
\end{equation}
and 
\begin{equation}  \label{assumption2}
g_{i_1}(\Sigma^{i_1})=g_{i_2}(\Sigma^{i_2}) \Rightarrow  \Sigma^{(i_1)}=\Sigma^{(i_2)} \text{ and }g_{i_1}=g_{i_2}.
\end{equation}

Indeed, for an almost complex manifold $(X,J)$, for two $J$-holomorphic maps
$g_i  \colon \Sigma^{(i)} \to X$, $i=1,2$ from compact 
connected 
Riemann surfaces $\Sigma^{(i)}$, either 
the set $$(g_1,g_2)^{-1}(\Delta_{1,2}^2(X))=\{(z_1,z_2) \in \Sigma^{(1)} \times \Sigma^{(2)} \ \mid \ g_1(z_1)=g_2(z_2)\}$$ consists of finitely many points, or
$$g_1(\Sigma^{(1)})=g_2(\Sigma^{(2)}), \text{ hence, by \eqref{assumption2} } \Sigma^{(i_1)}=\Sigma^{(i_2)} = \Sigma \text{ and }g_{i_1}=g_{i_2}.$$
For a simple J-holomorphic $g \colon \Sigma \to X$ from a compact Riemann surface, 
$$(g,g)^{-1}(\Delta_{1,2}^{2}(X))= \Delta_{1,2}^{2}(\Sigma) \cup S,$$ where $S$ consists of finitely many points. 
See \cite[Theorem E.1.2, Exercise E.1.4]{MS:JCurves} and \cite{malta}.
By induction, this implies that the pulling 
back diagonals property holds for maps of the form \eqref{form}.

Now, for $J \in \J_{\gen}$, a holomorphic shadow in $(X,J)$ is either a J-holomorphic curve or a point.
Hence for a holomorphic shadow $S \subset (X^n,J^n)$, the projection of $S$ on any of the coordinates is either a point or a $J$-holomorphic curve. 
So every $J$-holomorphic map $f \colon M \to X^n$ from a compact complex manifold $M$ decomposes as 

\xymatrix{
 M  \ar[d]^{h}  \ar[r]^{f} & f(M)   \ar[d]^{\prod_{j=1}^{n}{\pr_{j}|_{f(M)}}}  \ar[r]^{\subset} &X^n \\
 {\prod_{j=1}^{n} \Sigma^{(j)}}  \ar[r]_{\prod_{j=1}^n{g_j}} & {\prod_{j=1}^{n} C_j } \ar[r]_{\subset} & X^n  }

where $h \colon M \to  {\prod_{j=1}^n \Sigma^j}$ is a holomorphic map, and for all $j$, $g_j \colon \Sigma^{(j)} \to X$ is a $J$-holomorphic map from a compact Riemann surface $\Sigma^{(j)}$. 
Some of these maps might be constant, in that case replace $\Sigma^{(j)}$ with a point.
We can also assume that \eqref{assumption1} and \eqref{assumption2} hold. This reduces this case to the special case \eqref{form} discussed above.

\end{proof}

We show that the pulling back diagonals property imply 
being closed under finite intersections, the Descending Chain Condition, the fact that the image of an irreducible set under a coordinate projection is irreducible, and the Essential Uncountability  
in ${\s}_{(X,J)}$.

{\bf Notation:}
In this subsection, $\F$ is the collection of all J-holomorphic maps from compact complex  manifolds to Cartesian products of $(X,J)$ for  $J \in {\J}_{\gen}$, and $\H$ denotes the collection of all holomorphic shadows in the finite Cartesian products of $(X,J)$ for $J \in \J_{\gen}$.

\begin{Claim} \label{inc} 
 Consider
  \[    f_{1} \colon  M_{1} \rightarrow X^n   \]
\[    f_{2} \colon  M_{2} \rightarrow X^n,  \]
maps in $\F$.
Then ${f_1}^{-1}[f_1(M_1) \cap f_2(M_2)] $ is a complex subvariety of $M_1$. 
 \end{Claim}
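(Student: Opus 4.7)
The plan is to realize $f_1^{-1}[f_1(M_1) \cap f_2(M_2)]$ as the image under the projection $\pi_1 \colon M_1 \times M_2 \to M_1$ of a complex subvariety of $M_1 \times M_2$ cut out by a diagonal condition on $X^{2n}$, and then conclude by Remmert's proper mapping theorem.

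First, I would form the product map $f_1 \times f_2 \colon M_1 \times M_2 \to X^n \times X^n = X^{2n}$, which is $J$-holomorphic since $f_1$ and $f_2$ are, and whose domain is a compact complex manifold. The fiber product
$$
Z \;:=\; \{(x,y) \in M_1 \times M_2 \mid f_1(x) = f_2(y)\}
$$
is precisely the preimage under $f_1 \times f_2$ of the matching set
$$
\Delta \;=\; \{(a_1,\ldots,a_n,b_1,\ldots,b_n) \in X^{2n} \mid a_i = b_i \text{ for all } i = 1,\ldots,n\},
$$
which is the intersection of the $n$ pairwise diagonals $\Delta^{2n}_{(i,\,n+i)}$ in the sense of Definition \ref{str}. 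Applying Lemma \ref{dgen} to the $J$-holomorphic map $f_1 \times f_2$ and to each of these diagonals, each preimage is a complex analytic subvariety of $M_1 \times M_2$. Since a finite intersection of complex analytic subvarieties is again a complex analytic subvariety, $Z$ itself is a complex subvariety of $M_1 \times M_2$.

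Second, the projection $\pi_1 \colon M_1 \times M_2 \to M_1$ is holomorphic and proper because $M_2$ is compact. By Remmert's proper mapping theorem the image $\pi_1(Z)$ is a complex analytic subvariety of $M_1$. Since
$$
\pi_1(Z) \;=\; \{x \in M_1 \mid \exists\, y \in M_2,\ f_1(x) = f_2(y)\} \;=\; f_1^{-1}[f_1(M_1) \cap f_2(M_2)],
$$
the claim follows.

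The only substantive input in this argument is the preceding Lemma \ref{dgen}; once it is in hand, the rest is just bookkeeping to identify $\Delta$ as a finite intersection of the allowed diagonals, together with a standard application of the proper mapping theorem. I do not expect any real obstacle beyond keeping the indexing of the $2n$-fold diagonal consistent.
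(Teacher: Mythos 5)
Your proposal is correct and follows essentially the same route as the paper: pull back the diagonal of $X^n\times X^n$ under $f_1\times f_2$ via Lemma \ref{dgen} to get a complex subvariety $Z\subset M_1\times M_2$, then push it forward by the proper projection $\pi_1$ and invoke the proper mapping theorem. Your extra step of writing $\Delta_{X^n}$ as the intersection of the $n$ pairwise diagonals $\Delta^{2n}_{(i,\,n+i)}$ is just a more explicit bookkeeping of what the paper does implicitly.
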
 

\begin{proof}
By the ''pulling back diagonals`` property, 
$Z=(f_{1} \times f_{2})^{-1}[\Delta_{X^n}]$ is a complex subvariety
of the complex manifold $M_{1} \times M_{2}$.

The preimage ${f_{1}}^{-1}[f_{1}(M_1) \cap f_2(M_2)]$
is the image of the complex subvariety $Z$ by the  canonical projection
$\pi_{1} \colon M_{1} \times M_{2} \rightarrow M_{1}$, hence, by the proper mapping theorem,
it is a complex subvariety of $M_1$.

\end{proof}
Now, the intersection $f_1(M_1) \cap f_2(M_2)$ is the image of 
$f_1 \circ \pi_1 \circ \phi$, where $\phi \colon \tilde{Z} \to Z$ is a resolution of 
$Z=(f_{1} \times f_{2})^{-1}[\Delta_{X^n}]$ to a compact complex analytic manifold $\tilde{Z}$.

As a result, 
\begin{Corollary}
The holomorphic shadows in $\H$ are closed under finite intersections.
\end{Corollary}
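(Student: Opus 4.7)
The plan is to reduce the corollary to the content already assembled in Claim \ref{inc} together with Hironaka's resolution of singularities. First, by induction it suffices to treat the intersection of two holomorphic shadows $S_1, S_2 \subseteq X^n$. By definition of a holomorphic shadow, I can fix $J$-holomorphic maps $f_i \colon M_i \to X^n$ from compact complex manifolds $M_i$ with $f_i(M_i) = S_i$. The goal is then to exhibit $S_1 \cap S_2$ as the image of a single $J$-holomorphic map from a compact complex manifold.

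The key step is the observation recorded immediately after Claim \ref{inc}: set $Z = (f_1 \times f_2)^{-1}[\Delta_{X^n}] \subseteq M_1 \times M_2$, which by the pulling-back-diagonals property of Lemma \ref{dgen} is a complex subvariety of the compact complex manifold $M_1 \times M_2$. Apply \cite{hiro} to obtain a proper resolution $\phi \colon \tilde{Z} \to Z$ by a compact complex manifold $\tilde{Z}$, and consider the composition
\[
F \;=\; f_1 \circ \pi_1 \circ \phi \colon \tilde{Z} \longrightarrow X^n,
\]
where $\pi_1 \colon M_1 \times M_2 \to M_1$ is the canonical projection. Since $\pi_1$ is holomorphic, $\phi$ is holomorphic, and $f_1$ is $J$-holomorphic, the composition $F$ is $J$-holomorphic by the composition property recorded before Claim \ref{123}. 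By construction, $(\pi_1 \circ \phi)(\tilde{Z}) = \pi_1(Z) = f_1^{-1}[f_1(M_1) \cap f_2(M_2)]$, so $F(\tilde{Z}) = f_1(M_1) \cap f_2(M_2) = S_1 \cap S_2$. This exhibits $S_1 \cap S_2$ as a holomorphic shadow.

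I do not anticipate a serious obstacle: all the nontrivial geometry, namely that $Z$ is a genuine complex subvariety, was carried out in Lemma \ref{dgen} and Claim \ref{inc}, and the remaining ingredients (existence of a resolution preserving compactness, and the closure of $J$-holomorphic maps under composition) are available off-the-shelf. Finally, iterating the two-shadow case yields closure under all finite intersections, completing the proof.
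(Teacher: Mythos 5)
Your argument is correct and is essentially identical to the paper's: the paper also deduces the corollary from Claim \ref{inc} by resolving $Z=(f_1\times f_2)^{-1}[\Delta_{X^n}]$ via Hironaka and exhibiting $S_1\cap S_2$ as the image of $f_1\circ\pi_1\circ\phi$. No gaps.
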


Similarly, consider
        \[    f_{1} \colon  M_{1} \rightarrow X^{n+k} \]
        \[    f_{2} \colon  M_{2} \rightarrow X^n,    \]
         in $\F$, and the coordinate projection map 
         \[ \pr_{1,\ldots,n} \colon X^{n+k} \to X^n. \]

     The preimage
     $Z=(\pr_{1,\ldots,n} \circ f_{1},f_{2})^{-1}[\Delta_{X^n}]$ is a complex subvariety
     of the complex manifold $M_{1} \times M_{2}$, hence its image by $f_1$ composed on  
     the  canonical projection ${{M_{1} \times M_{2} \rightarrow M_{1}}}$ (composed on a resolution of $Z$) is a holomorphic shadow $\in \H$
     in $X^{n+K}$. 
     
     \begin{Corollary} \label{int2}
     For $S_1 \in \H $ a holomorphic shadow in $X^{n+k}$, and $S_2 \in \H$ a holomorphic shadow in $X^n$, 
     the intersection $S_1 \cap S_2 \times X^k$ is a holomorphic shadow $\in \H$ in $X^{n+k}$.
     \end{Corollary}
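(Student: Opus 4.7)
The plan is to reduce Corollary \ref{int2} directly to the construction sketched in the paragraph immediately preceding it. Given the shadows $S_1 \in \H$ in $X^{n+k}$ and $S_2 \in \H$ in $X^n$, by definition choose $J$-holomorphic maps $f_1 \colon M_1 \to X^{n+k}$ and $f_2 \colon M_2 \to X^n$ from compact complex manifolds such that $f_1(M_1) = S_1$ and $f_2(M_2) = S_2$. Then I would consider the auxiliary map
\[
(\pr_{1,\ldots,n} \circ f_1,\, f_2) \colon M_1 \times M_2 \longrightarrow X^n \times X^n,
\]
which is $J$-holomorphic since $\pr_{1,\ldots,n}$ is $J$-holomorphic, $f_1,f_2$ are $J$-holomorphic, and both Cartesian products and compositions preserve $J$-holomorphicity.

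The next step is to apply Lemma \ref{dgen} to this map. The diagonal $\Delta_{X^n} \subset X^{2n}$ is the intersection of the diagonals $\Delta^{2n}_{(i,n+i)}$ for $i = 1,\ldots,n$, each of which is of the type covered by Lemma \ref{dgen}. Hence
\[
Z \;=\; (\pr_{1,\ldots,n} \circ f_1,\, f_2)^{-1}[\Delta_{X^n}]
\]
is a finite intersection of complex subvarieties of $M_1 \times M_2$, and so is itself a complex subvariety of the compact complex manifold $M_1 \times M_2$. By Hironaka's theorem, $Z$ admits a resolution $\phi \colon \tilde Z \to Z$ with $\tilde Z$ a compact complex manifold and $\phi$ proper and holomorphic.

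It remains to identify the image. I would simply compute set-theoretically: a point $(m_1,m_2) \in M_1 \times M_2$ lies in $Z$ iff $\pr_{1,\ldots,n}(f_1(m_1)) = f_2(m_2)$, so
\[
f_1 \circ \pi_1 \circ \phi (\tilde Z) \;=\; \{ f_1(m_1) : \pr_{1,\ldots,n}(f_1(m_1)) \in f_2(M_2)\} \;=\; S_1 \cap (S_2 \times X^k).
\]
Since $f_1 \circ \pi_1 \circ \phi$ is a $J$-holomorphic map from the compact complex manifold $\tilde Z$ to $X^{n+k}$, its image $S_1 \cap (S_2 \times X^k)$ is a holomorphic shadow in $\H$, as required. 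The only nontrivial ingredient is the pulling-back-diagonals property from Lemma \ref{dgen} (which is where the hypothesis $J \in \J_{\gen}$ enters); once it is available, the rest is a routine assembly of the proper mapping theorem, Hironaka resolution, and the closure properties of $J$-holomorphic maps collected in Claim \ref{123}.
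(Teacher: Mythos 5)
Your proposal is correct and follows essentially the same route as the paper: the paper's own argument is precisely the paragraph preceding the corollary, which forms $Z=(\pr_{1,\ldots,n}\circ f_1, f_2)^{-1}[\Delta_{X^n}]$, invokes the pulling-back-diagonals property (Lemma \ref{dgen}) to see that $Z$ is a complex subvariety of $M_1\times M_2$, and then takes the image of a resolution of $Z$ under $f_1\circ\pi_1$. Your extra remark that $\Delta_{X^n}\subset X^{2n}$ is a finite intersection of the diagonals $\Delta^{2n}_{(i,n+i)}$, so that Lemma \ref{dgen} applies coordinatewise, is a small point the paper leaves implicit, but it does not change the argument.
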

     \comment{generalize to product with a diagonal}
    
     \begin{Corollary} \label{intj}
    Let $A \in \H$ be a holomorphic shadow in $X^n$. Let $D$ be a diagonal in $X^n$. Then the intersection $A \cap D$
    is a holomorphic shadow in $X^n$.
\end{Corollary}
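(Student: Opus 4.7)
The plan is to mimic the argument of Claim \ref{inc} and Corollary \ref{int2}, but using the diagonal $D$ directly instead of intersecting with another shadow. Write $A = f(M_f)$ for some J-holomorphic map $f \colon M_f \to X^n$ from a compact complex manifold, and let $D = \Delta^n_{i_1,\ldots,i_k}$.

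First I would form the preimage $Z := f^{-1}(D) \subset M_f$. By Lemma \ref{dgen}, since $J \in \J_{\gen}$, the pulling back diagonals property holds for $f$, so $Z$ is a complex analytic subvariety of the compact complex manifold $M_f$. In particular $Z$ is itself compact.

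Next I would invoke Hironaka's resolution of singularities (as recalled just after \ref{rest}) to obtain a proper holomorphic map $\phi \colon \tilde{Z} \to Z$ from a compact complex analytic manifold $\tilde{Z}$ with $\phi$ an isomorphism over the regular locus. The composition
\[
\tilde{Z} \xrightarrow{\phi} Z \hookrightarrow M_f \xrightarrow{f} X^n
\]
is then a J-holomorphic map from a compact complex manifold to $X^n$, and its image is exactly $f(Z) = f(f^{-1}(D)) = f(M_f) \cap D = A \cap D$. Hence $A \cap D$ is a holomorphic shadow in $X^n$, belonging to $\H$.

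There is no real obstacle here beyond correctly invoking Lemma \ref{dgen}; the only thing to watch is that we are using $J \in \J_{\gen}$ implicitly (through the hypothesis of Lemma \ref{dgen}), which is consistent with the standing assumption under which $\H$ was defined in this subsection.
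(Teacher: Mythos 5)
Your proposal is correct and follows essentially the same route as the paper: the paper's proof also writes $A=f(M)$, applies Lemma \ref{dgen} to conclude that $f^{-1}[D]$ is a complex subvariety of $M$, and then observes that its image $A\cap D$ is a holomorphic shadow. You have merely made explicit the final step (resolving $f^{-1}(D)$ by Hironaka to obtain a compact complex manifold mapping onto $A\cap D$), which the paper leaves implicit but uses in the same way elsewhere (e.g.\ in the discussion following Claim \ref{inc}).
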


\begin{proof}
$A$ is the image under a J-holomorphic 
   map from a compact complex manifold $M$ into $X^n$. Then, $A \cap D$
   is the image under $f$ of $f^{-1}[D]$. By Lemma
       \ref{dgen}, $f^{-1}[D]$ is a complex subvariety in $M$.
       Hence, this image is a
       holomorphic shadow.
\end{proof}

\begin{Claim} \label{inv}
Let $S \in \H$ be a holomorphic shadow in $X^{n+m}$. 
Then the inverse image of a holomorphic shadow $C \subseteq X^n$ under the projection 
${\pr_{1,\ldots,n}}|_{S} \colon S \rightarrow X^n$ 
is a holomorphic shadow in $\H$. 
\end{Claim}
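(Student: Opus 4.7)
The plan is to realize the inverse image as the image of a J-holomorphic map from a compact complex manifold, using essentially the same construction that gave Corollary \ref{int2}. Write $S = f_1(M_1)$ for some J-holomorphic $f_1 \colon M_1 \to X^{n+m}$ from a compact complex manifold $M_1$, and $C = f_2(M_2)$ for some J-holomorphic $f_2 \colon M_2 \to X^n$ from a compact complex manifold $M_2$.

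The first step is to form the composed J-holomorphic map $g_1 = \pr_{1,\ldots,n} \circ f_1 \colon M_1 \to X^n$ and consider
\[ Z = (g_1, f_2)^{-1}\bigl[\Delta_{X^n}\bigr] \subset M_1 \times M_2, \]
where $\Delta_{X^n}$ is the diagonal in $X^n \times X^n$. By Lemma \ref{dgen} applied to the J-holomorphic map $(g_1, f_2) \colon M_1 \times M_2 \to X^{2n}$, the set $Z$ is a complex subvariety of the compact complex manifold $M_1 \times M_2$.

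Next, I would verify the set-theoretic identity
\[ (\pr_{1,\ldots,n}|_S)^{-1}(C) \;=\; f_1\bigl(\pi_1(Z)\bigr), \]
where $\pi_1 \colon M_1 \times M_2 \to M_1$ is the canonical projection. Indeed, a point $s \in S$ lies in $(\pr_{1,\ldots,n}|_S)^{-1}(C)$ iff there exist $m_1 \in M_1$ with $f_1(m_1) = s$ and $m_2 \in M_2$ with $g_1(m_1) = f_2(m_2)$, which is exactly the condition $(m_1,m_2) \in Z$ followed by applying $f_1$.

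Finally, by Hironaka's theorem one can choose a resolution $\phi \colon \tilde Z \to Z$ with $\tilde Z$ a compact complex manifold and $\phi$ a proper holomorphic map. The composition
\[ f_1 \circ \pi_1 \circ \phi \colon \tilde Z \longrightarrow X^{n+m} \]
is then a J-holomorphic map from a compact complex manifold whose image equals $(\pr_{1,\ldots,n}|_S)^{-1}(C)$; this exhibits the inverse image as an element of $\H$. There is no real obstacle here beyond checking the set-theoretic description in the middle step; the substance of the claim was already built into Lemma \ref{dgen} and the proper mapping / resolution machinery invoked in the paragraph after Claim \ref{subim}.
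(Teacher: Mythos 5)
Your proposal is correct and follows essentially the same route as the paper: the paper's proof reduces to Claim \ref{inc} (whose own argument is exactly your diagonal pullback $Z=(\pr_{1,\ldots,n}\circ f_1,f_2)^{-1}[\Delta_{X^n}]$ followed by the proper mapping theorem), and your set $\pi_1(Z)$ coincides with the subvariety $M_A=(\pr_{1,\ldots,n}\circ f_S)^{-1}(\pr_{1,\ldots,n}(S)\cap C)$ that the paper uses. Your version is slightly more explicit about the resolution step and the set-theoretic identification, but there is no substantive difference.
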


\begin{proof}
Given 
$$
 \begin{CD}
{M_S}    @> f_S  >>    {S} @> {\pr_{1,\ldots,n}}|_S >> {X^n}, \\ 
  \end{CD} $$
 set
$$A={\pr_{1,\ldots,n}}(S) \cap C \subseteq X^n.$$ By Claim \ref{inc}, 
$A$ is the image under $f_S$ of a complex subvariety 
 $M_A \subseteq M_S$.
It remains to notice that $f_S(M_A)={{\pr_{1,\ldots,n}}|_S}^{-1}(C)$.
\end{proof}

\begin{Claim} \label{DCC}
 The descending chain condition holds               
for holomorphic shadows in $\H$. 
\end{Claim}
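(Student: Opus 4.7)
The plan is to transfer the descending chain to a parameterizing compact complex manifold and invoke the classical Noetherianity of analytic subvarieties there. Suppose $C_1 \supseteq C_2 \supseteq \cdots$ is a descending chain of holomorphic shadows in a fixed $X^n$. I would start by choosing a J-holomorphic map $f_1 \colon M_1 \to X^n$ from a compact complex manifold with $f_1(M_1)=C_1$, and for each $i$ fix some parameterization $f_i \colon M_i \to X^n$ with $f_i(M_i)=C_i$. All the geometry will be pulled back to $M_1$.

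The key step is Claim \ref{inc} applied to the pair $(f_1,f_i)$ for each $i$: it gives that
\[
V_i \;:=\; f_1^{-1}\!\bigl[f_1(M_1)\cap f_i(M_i)\bigr]
\]
is a complex analytic subvariety of $M_1$. Since the chain is descending, $C_i\subseteq C_1 = f_1(M_1)$, so $f_1(M_1)\cap f_i(M_i)=C_i$ and hence $V_i=f_1^{-1}(C_i)$. Monotonicity of preimages yields a descending chain $V_1\supseteq V_2\supseteq\cdots$ of closed complex analytic subsets of the compact complex manifold $M_1$.

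Now I would invoke the classical fact that the closed analytic subsets of a compact complex manifold satisfy the descending chain condition (each analytic subset decomposes into finitely many irreducible components, each carrying a well-defined nonnegative integer dimension, so an infinite strictly descending chain is impossible). This gives an index $k$ with $V_i=V_k$ for all $i\geq k$. Applying $f_1$ and using that $f_1(f_1^{-1}(C_i))=C_i$ (which holds precisely because $C_i\subseteq f_1(M_1)$), one recovers $C_i=f_1(V_i)=f_1(V_k)=C_k$ for all $i\geq k$, as required.

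The substantive geometric content is already packaged in Claim \ref{inc}, which rests on the pulling-back-diagonals property of Lemma \ref{dgen} and therefore on the assumption $J\in\J_{\gen}$; once that input is granted, the DCC reduces routinely to Noetherianity in the analytic category on $M_1$. The only point needing mild care is the identification $V_i=f_1^{-1}(C_i)$, where the hypothesis $C_i\subseteq C_1$ is used in an essential way — without the descending chain assumption, Claim \ref{inc} would only give a complex subvariety of $M_1$ whose image is the intersection $C_1\cap C_i$, not $C_i$ itself.
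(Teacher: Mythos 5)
Your proposal is correct and follows essentially the same route as the paper's proof: pull the chain back to $M_1$ via Claim \ref{inc}, use $C_i\subseteq C_1$ to identify $f_1^{-1}[C_1\cap C_i]$ with $f_1^{-1}[C_i]$, invoke the DCC for analytic subvarieties of the compact complex manifold $M_1$, and push forward by $f_1$. The points you flag as needing care (the descending hypothesis and surjectivity of $f_1$ onto $C_1$) are exactly the ones the paper uses implicitly.
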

\begin{proof}
Consider a descending chain 
\[ S_{1} \supseteq S_{2} \supseteq \ldots \supseteq S_{i} \supseteq \ldots \]
of holomorphic shadows $S_i=f_i(M_i)$ in $\H$. 
By Claim \ref{inc}, 
\[M_1= {f_1}^{-1}[S_{1}] \supseteq {f_1}^{-1}[S_1 \cap S_{2}] \supseteq \ldots \supseteq {f_1}^{-1}[S_1 \cap S_{i}] \supseteq \ldots \]
is a descending chain of complex subvarieties of $M_1$. 
Since $S_1 \cap S_i = S_i$, we get 
\[ {f_1}^{-1}[S_{1}] \supseteq {f_1}^{-1}[S_{2}] \supseteq \ldots \supseteq {f_1}^{-1}[S_{i}] \supseteq \ldots \]

By the descending chain condition                 
for complex subvarieties of a compact complex                 
manifold (see, e.g., \cite{imam1}), 
there is $k$ such that for all $i \geq k$ ${f_1}^{-1}[S_{i}] = {f_1}^{-1}[S_{k}],$ hence 
so are their images under $f_1$, i.e., 
$S_i=S_k$.

\end{proof}

This implies that for any holomorphic shadow $S$ in $\H$ there are           
distinct holomorphic shadows $S_{1},\ldots,S_{m}$ in $\H$ such that $S=S_{1}           
\cup \ldots \cup S_{m}$, where $m$ is maximal.           
These $S_{i}$ are the \emph{irreducible components} of $S$.

We make the following observation. 
\begin{Claim} \label{prirred} 
The image of an irreducible holomorphic shadow $C \in \H$ under a projection $pr \colon X^{n+m} \rightarrow X^n$ is an irreducible
 holomorphic shadow. \end{Claim}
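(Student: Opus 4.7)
The plan is to argue by contradiction: if $\pr(C)$ were reducible, I would pull back a nontrivial decomposition to the domain of a representing $J$-holomorphic map and use the connectedness of that domain to derive a contradiction.

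Fix a $J$-holomorphic map $f\colon M\to X^{n+m}$ from a compact complex manifold $M$ with $f(M)=C$. First I would reduce to the case that $M$ is connected: writing $M=M_1\sqcup\dots\sqcup M_r$ as the disjoint union of its finitely many connected components, each $M_j$ is itself a compact complex manifold, each $f(M_j)$ is a holomorphic shadow, and $C=\bigcup_j f(M_j)$. By irreducibility of $C$ one summand $f(M_{j_0})$ already equals $C$, so I replace $f$ by $f|_{M_{j_0}}$. Observe that $\pr\circ f\colon M\to X^n$ is $J$-holomorphic and $\pr(C)=(\pr\circ f)(M)$ is a holomorphic shadow by Claim \ref{123}(4).

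Now suppose, for contradiction, that $\pr(C)=T_1\cup T_2$ with $T_1,T_2$ proper $\s_{(X,J)}$-closed subsets of $\pr(C)$. I would show that each $V_i:=(\pr\circ f)^{-1}(T_i)$ is a complex analytic subvariety of $M$. By definition each $T_i$ is a finite union of atomic closed sets of the form $S\times D_1\times\dots\times D_k$ (up to permutation of coordinates), where $S$ is a holomorphic shadow and each $D_j$ is a diagonal; such an atom is the intersection in $X^n$ of one shadow-cylinder and finitely many diagonal-cylinders. The preimage under $\pr\circ f$ of a shadow-cylinder is a complex subvariety of $M$ by Claim \ref{inc} applied to $\pr\circ f$ (composed with the appropriate coordinate projection) and a representing map of the shadow. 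The preimage of a diagonal-cylinder is a complex subvariety of $M$ by Lemma \ref{dgen}, since the preimage of a diagonal in $X^n$ under $\pr$ is again a diagonal in $X^{n+m}$. Finite intersections and finite unions of complex subvarieties are complex subvarieties, so $V_1$ and $V_2$ are complex subvarieties of $M$.

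Finally, $V_1\cup V_2=(\pr\circ f)^{-1}(T_1\cup T_2)=(\pr\circ f)^{-1}(\pr(C))=M$. Since $M$ is a connected complex manifold, a proper closed complex subvariety has empty interior (by the identity principle applied to local defining functions), so $V_1$ and $V_2$ cannot both be proper subvarieties of $M$. Hence $V_i=M$ for some $i$, which forces $\pr(C)=(\pr\circ f)(M)\subseteq T_i$, contradicting $T_i\subsetneq\pr(C)$. The main obstacle is just the bookkeeping that establishes each $V_i$ as a complex subvariety by handling all shapes of atomic $\s_{(X,J)}$-closed sets; once Claim \ref{inc} and Lemma \ref{dgen} are in hand this is straightforward, and the remaining steps are routine.
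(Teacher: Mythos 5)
Your proof is correct, but it takes a genuinely different route from the paper's. The paper stays entirely at the level of the shadow structure: writing $pr(C)=S_1\cup S_2$ with $S_1,S_2$ holomorphic shadows properly contained in $pr(C)$, it applies Claim \ref{inv} to conclude that ${{pr}|_C}^{-1}(S_1)$ and ${{pr}|_C}^{-1}(S_2)$ are holomorphic shadows, proper in $C$, whose union is $C$ --- contradicting the irreducibility of $C$ itself. You instead descend to the parametrizing compact complex manifold $M$: after reducing to $M$ connected (a step the paper never needs), you show that the preimages of the $T_i$ under $pr\circ f$ are analytic subvarieties of $M$ and invoke the fact that a connected complex manifold is not the union of two proper analytic subsets. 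Your route costs the extra bookkeeping over the atomic closed sets of ${\s}_{(X,J)}$ and the connectedness reduction, but it buys two things: it establishes irreducibility of $pr(C)$ with respect to all ${\s}_{(X,J)}$-closed sets rather than only with respect to decompositions into holomorphic shadows (which is all the paper's proof considers), and it isolates the useful observation that an irreducible shadow can be represented as the image of a \emph{connected} compact complex manifold. The paper's argument is shorter because Claim \ref{inv} already packages the analytic input (Claim \ref{inc} together with the proper mapping theorem) that you reassemble by hand on $M$.
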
  
\begin{proof}

Otherwise ${pr}(C) = S_1 \cup S_2$ where $S_1$ and $S_2$ are distinct holomorphic shadows that $\neq {pr}(C)$.
By Claim \ref{inv}, ${{pr}|_C}^{-1}(S_1)$ and ${{pr}|_C}^{-1}(S_2)$ are holomorphic shadows in $\H$, they are distinct, $\neq C$ 
and their union equals $C$, to get a contradiction.
\end{proof}

\begin{Claim} \label{eu}
(EU) If a holomorphic shadow $S \in \H$ is a union of countably
                 many holomorphic shadows in $\H$, then there are finitely many among the
    subsets whose union is $S$.
\end{Claim}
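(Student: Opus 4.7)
The plan is to pull the countable cover of $S$ back to the domain of a parametrization, where the Baire category theorem applies to a compact complex manifold. First I would write $S = f(M)$ for a $J$-holomorphic map $f \colon M \to X^n$ from a compact complex analytic manifold $M$, and suppose $S = \bigcup_{i\geq 1} S_i$ with each $S_i \in \H$. The argument proving Claim \ref{inc} shows that
\[
f^{-1}(S_i) \;=\; f^{-1}\bigl(f(M) \cap S_i\bigr)
\]
is a complex analytic subvariety of $M$ for every $i$. Since the $S_i$ cover $f(M)$, the pullbacks cover $M$:
\[
M \;=\; \bigcup_{i \geq 1} f^{-1}(S_i).
\]

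Next I would invoke Baire category. Being compact, $M$ has finitely many connected components $M^{(1)}, \dots, M^{(r)}$, each a compact connected complex manifold and hence a Baire space. A proper complex analytic subvariety of a connected complex manifold has empty interior, so no component can be written as a countable union of proper subvarieties. Therefore for each $\alpha$ there exists an index $i(\alpha)$ with $M^{(\alpha)} \subseteq f^{-1}(S_{i(\alpha)})$. Applying $f$ and using $S_{i(\alpha)} \subseteq S = f(M)$ (so that $f(f^{-1}(S_{i(\alpha)})) = S_{i(\alpha)}$) yields the finite subcover $S = S_{i(1)} \cup \cdots \cup S_{i(r)}$.

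The hard part is the Baire-category step: one must know that in each connected component $M^{(\alpha)}$, the pullback $f^{-1}(S_i) \cap M^{(\alpha)}$ is either all of $M^{(\alpha)}$ or nowhere dense. This is the classical fact that a proper analytic subvariety of a connected complex manifold has empty interior (via the identity principle for holomorphic functions). Everything else is bookkeeping that transfers the countable-cover question from holomorphic shadows in $X^n$ to complex analytic subvarieties of the compact complex manifold $M$, using Claim \ref{inc}.
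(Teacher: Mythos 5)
Your proof is correct and follows essentially the same route as the paper: both reduce the statement to the compact complex manifold $M$ by pulling the cover back through Claim \ref{inc} and observing $M=\bigcup_i f^{-1}[S\cap S_i]$. The only difference is that the paper then cites the (EU) property for complex subvarieties of a compact complex manifold from \cite{imam1}, whereas you supply its standard proof (Baire category on each connected component, using that a proper analytic subvariety of a connected complex manifold is nowhere dense), which is a harmless and self-contained expansion of the same step.
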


 \begin{proof}
 Given $f \in \F$ such that
 $$ f(M_f) = S = \cup_{i \in \N}{S_i}, $$
 where $S_i$ are holomorphic shadows in $\H$, then $$ S \cap S_i = S_i, $$ so
 \[ M_f=f^{-1}[S]= f^{-1}[\cup_{i \in \N}{S_i}]=\cup_{i \in \N}f^{-1}[S_i] = \cup_{i \in  \N}f^{-1}[S \cap S_i]. \]
 By Claim \ref{inc}, 
for all $i$, the set $f^{-1}[S \cap S_i]$ is a complex subvariety of $M$.

 By the (EU) claim for complex subvarieties in a compact 
complex manifold (see \cite{imam1}),  there are finitely many among the
    subsets $f^{-1}[S \cap S_i]$ whose union is $M$, hence  
  there are finitely many among the
    subsets $S_i $ whose union is $S$. 
 \end{proof}

To complete the proof of Theorem \ref{genz}, we need to define a dimension of a holomorphic shadow, and show that it satisfies the dimension axioms. For that we need the following Lemma.

\begin{Lemma} \label{greq}
Let $A$ be a holomorphic shadow in $\H$. 
Then there is a subset $U_A$ in $A$ that satisfy the following.
\begin {enumerate}
\item $U_A$ is isomorphic to a complex manifold in the almost complex sense.
In particular, it is an integrable submanifold.
\item $U_A$ is dense in $A$, in the usual ($C^{\infty}$) topology.
\item The set $A \smallsetminus U_A$ is contained in a holomorphic shadow $P_A$ in $\H$ that is a proper subset of $A$.
\end{enumerate}
\end{Lemma}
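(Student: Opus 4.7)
The plan is to reduce to the case of irreducible $A$ via the DCC in Claim \ref{DCC} and then use the decomposition $f = g \circ h$ from the proof of Lemma \ref{dgen}. Suppose first that $A$ is irreducible; fix a $J$-holomorphic $f \colon M \to X^n$ with $f(M) = A$ and $M$ a connected compact complex manifold, and decompose $f = g \circ h$ where $g = \prod_{j=1}^n g_j \colon \prod_j \Sigma^{(j)} \to X^n$ is a product of simple (or constant) $J$-holomorphic maps from compact Riemann surfaces (with points allowed). Let $N := h(M)$, a compact complex analytic subvariety of $\prod_j \Sigma^{(j)}$. Using DCC together with the irreducibility of $A$, shrink $N$ to an irreducible subvariety still satisfying $g(N) = A$. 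For each $j$, the projection $\pi_j(N)$ is an irreducible analytic subvariety of the Riemann surface $\Sigma^{(j)}$, hence equals either $\Sigma^{(j)}$ or a single point $z_0$; in the latter case replace $\Sigma^{(j)}$ by $\{z_0\}$ (the corresponding $g_j$ becomes constant). After this simplification, $\pi_j(N) = \Sigma^{(j)}$ in every factor.

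Now let $E_j \subset \Sigma^{(j)}$ denote the finite set of critical points of $g_j$ together with the preimages of multiple points of $g_j$ (empty when $\Sigma^{(j)}$ is a point); finiteness uses simplicity of $g_j$ (cf.~\cite[Theorem E.1.2, Exercise E.1.4]{MS:JCurves}). Setting $U_j := \Sigma^{(j)} \setminus E_j$, each $g_j|_{U_j}$ is an injective $J$-holomorphic immersion, hence so is $g|_{\prod U_j}$. Let $N^\circ := N_{\mathrm{sm}} \cap \prod U_j$, and define
\[ U_A := g(N^\circ), \qquad P_A := g(N \setminus N^\circ). \]
Property (1) holds because $N^\circ$ is an open complex submanifold of $\prod \Sigma^{(j)}$ and $g|_{N^\circ}$ is an injective $J$-holomorphic immersion, so $U_A$ is a $J$-invariant integrable submanifold of $X^n$ biholomorphic to $N^\circ$. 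For property (2), the complement $N \setminus N^\circ = N_{\mathrm{sing}} \cup \bigcup_j (N \cap \pi_j^{-1}(E_j))$ is a finite union of proper analytic subvarieties of the irreducible $N$ (each $\pi_j^{-1}(E_j) \cap N$ is proper because $\pi_j(N) = \Sigma^{(j)}$ is not contained in the finite set $E_j$), hence nowhere dense in $N$; density of $U_A$ in $A$ then follows by continuity of $g$ and compactness of $N$. For property (3), $P_A$ is a holomorphic shadow by Claim \ref{123} (the image under $g$ of a compact analytic subvariety, after resolution of singularities); clearly $A \setminus U_A \subset P_A$; and $U_A \cap P_A = \emptyset$: if $g(z) = g(z')$ with $z \in N^\circ \subset \prod U_j$ and $z' \in N \setminus N^\circ$, then $z' \in \prod U_j$ forces $z = z'$ by injectivity of $g|_{\prod U_j}$, contradicting $z' \notin N^\circ$; alternatively $\pi_j(z') \in E_j$ for some $j$ would make $\pi_j(z) \in U_j$ a preimage of a multiple point of $g_j$, contradicting the definition of $E_j$. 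Since $U_A$ is nonempty, we conclude $P_A \subsetneq A$.

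For a general shadow $A$, decompose $A = A_1 \cup \cdots \cup A_r$ into irreducible shadow components via Claim \ref{DCC}, apply the above to each $A_i$ to produce $U_{A_i}, P_{A_i}$, and set
\[ U_A := \bigsqcup_i \bigl( U_{A_i} \setminus {\textstyle\bigcup_{l \neq i}} A_l \bigr), \qquad P_A := \bigcup_i P_{A_i} \,\cup\, \bigcup_{i \neq l} (A_i \cap A_l). \]
The main obstacle will be verifying density of $U_A$ in $A$ in this assembled setting: this reduces to showing that $\bigcup_{l \neq i}(A_i \cap A_l)$ is nowhere dense in $A_i$, i.e.~that every proper subshadow of an irreducible shadow has empty $C^\infty$ interior. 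I plan to prove this by pulling such a proper subshadow $S \subsetneq A_i$ back through $g \colon N_i \to A_i$ (using Claim \ref{inc}) to obtain a proper analytic subvariety of the irreducible $N_i$; restricted to $N_i^\circ$ it remains proper (otherwise $U_{A_i} \subset S$, whence $A_i = \overline{U_{A_i}} \subset S$), and transporting through the local biholomorphism $N_i^\circ \to U_{A_i}$ together with density of $U_{A_i}$ in $A_i$ forces $S$ to have empty interior in $A_i$.
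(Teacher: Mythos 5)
Your proof is correct in substance but takes a genuinely different route from the paper's. The paper does not use the curve factorization at this stage: it observes that the pulling-back-diagonals property makes the fiber relation $m_1 \sim_f m_2 \Leftrightarrow f(m_1)=f(m_2)$ a complex equivalence relation on $M$, invokes Moosa's quotient theorem \cite{moosa2} to produce a holomorphic map $g \colon M \smallsetminus P \to N$ onto a Zariski-dense subset of a compact complex space $N$, and then shows the induced injective map $h \colon V \to X^n$ is $J$-holomorphic with $J$-holomorphic inverse off a proper subvariety (via Lemma \ref{invtheo}); the umbra is $h(V)$ and the shadow caster is the abstract space $N$. You instead exploit the $\J_{\gen}$ hypothesis directly through the factorization $f=(\prod g_j)\circ h$ from the proof of Lemma \ref{dgen}, realize the shadow caster concretely as the subvariety $N=h(M)\subset\prod\Sigma^{(j)}$, and get injectivity and immersivity of $\prod g_j$ on $N^{\circ}$ from the Micallef--White finiteness of critical and double points of simple curves \cite{malta}, \cite[Theorem E.1.2]{MS:JCurves}. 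The trade-off: the paper's argument uses only the pulling-back-diagonals property and hence supports the later Remark that the construction works for any suitable family $\F$, whereas yours is tied to the curve-factorization setting; in exchange yours avoids Moosa's theorem, exhibits the shadow caster explicitly inside a product of Riemann surfaces, and yields the extra property $U_A\cap P_A=\emptyset$ in the irreducible case, which upgrades $\prod g_j|_{N^{\circ}}$ from an injective immersion to a proper embedding. Two points should be written out rather than sketched: the auxiliary fact that a proper subshadow of an irreducible shadow is nowhere dense (needed for your reducible-case gluing; your plan via Claim \ref{inc} and density of the umbra works, but note it already invokes the irreducible case of the lemma), and the justification that $N=h(M)$ is an analytic subvariety, which is the proper mapping theorem applied to the holomorphic lift $h$ asserted in the proof of Lemma \ref{dgen}.
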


\begin{proof}
By definition, $A$ is the image of a compact complex manifold $M$ under a J-holomorphic 
map 
\[f \colon M \rightarrow X^n.\]
By the ``pulling back diagonals property'', the inverse image under $(f,f) \colon M \times M \rightarrow X^n \times X^n$ of the diagonal of 
$X^n \times X^n$ is a complex subvariety in $M \times M$, i.e.,   
the relation ${\sim}_f$, where \[m_{1} \sim_f m_{2} \Leftrightarrow f(m_{1})=f(m_{2}),\] is a complex equivalence relation
in $M$. Moosa \cite{moosa2} showed that in this case there exist a degenerate 
complex subvariety $P$ of 
$M$ (i.e., the intersection of $P$ with each of the irreducible components of $M$
is a proper subvariety of the component, hence of a lower dimension),
 a compact complex analytic space $N$ with a degenerate complex subvariety $Q$ of N, and a holomorphic map 
\[g \colon U=M \smallsetminus P \rightarrow N, \] such that
 $N - g(U) \subset Q$ and for all $a,b \in U$, $g(a)=g(b)$ if and only if $a \sim_f b$.
The set $V=N \smallsetminus Q$ is a Zariski-open set that is dense in $N$ and contained in $g(U)$.  
By replacing $U$ with $g^{-1}V=M \smallsetminus P \smallsetminus g^{-1}Q$ we can assume that $g(U)=V$. By reducing $U$ and $V$, 
we can assume that $g$
 is a submersion, 
i.e., for any $u \in U$, $dg_u$ is onto $T_{g(u)}V$. By the local submersion theorem, 
for any $u \in U$, there are 
holomorphic  local coordinates around $u$ and $g(u)$ such that $g(u_1,\ldots,u_k)=(u_1,\ldots,u_l)$.

We define \[h \colon V \rightarrow X^n\] by $h(c) = x$ if there exists 
$m \in g^{-1}(c)$ such that $f(m) = x$. 
In the holomorphic local coordinates of $U$ and $V=g(U)$ chosen above,  
$h(u_1,\ldots,u_l)=f(u_1,\ldots,u_k)$, i.e., 
locally $f$ is the composition of $h$ with the
canonical submersion.  
Then, $h$ is a well defined one-to-one map 
that is smooth. 
Since  $$ J_{X^n}  dh \circ dg = J_{X^n}  d(h \circ g) = J_{X^n}  df = df J_{M} = d(h \circ g)  J_{X^n} = dh \circ dg   J_M = dh  J_N \circ dg, $$
and $dg$ is a onto,  $h$ is  J-holomorphic.

The zero set $Z$ of the holomorphic function $\det h \colon V \to \C$ is a proper complex subvariety of 
$V$ (of lower dimension); replacing $V$ by  $V \smallsetminus Z$ we get that  
the map $h^{-1}$ is also J-holomorphic; see Lemma \ref{invtheo}. 
The set $$U_A=h(g(U))$$ is dense in $A=f(M)$ since 
$$A=f(M)=f({\cl}_{\C}({U})) \subseteq \cl({f(U)}) =  \cl({h(g(U))}),$$
where ${\cl}_{\C}(\cdot)$ is the Zariski-closure and $\cl(\cdot)$ is closure in the $C^{\infty}$-topology.
$A \smallsetminus U_A$ is contained in the image $P_A$ of $f$ restricted to $P=M \smallsetminus U$. 
 \end{proof}

{\bf Notation:}  

We will call such $U_A$ an \emph{umbra} of the holomorphic shadow $A$, and 
call \emph{penumbra} an 
holomorphic shadow as in part
$(3)$ of the lemma.   The compact complex variety $N_A=N$ in which
 $h^{-1}[U_A]=V_A$ is dense is called a \emph{shadow caster} of $A$. 
 We call $g \colon U \rightarrow N$
 the \emph{map induced by the complex equivalence relation ${\sim}_f$}.

 \begin{Lemma} \label{invtheo}
    \emph{The inverse function theorem in the almost complex category.} 
              Suppose that $ f \colon X \rightarrow Y$ is a J-holomorphic map whose 
              derivative $df_p$ at the point $p$ is an isomorphism. 
              Then $f$ is a local J-holomorphic isomorphism at $p$.\\
       \end{Lemma}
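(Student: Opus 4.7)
The plan is to combine the ordinary smooth inverse function theorem with a direct algebraic verification that the resulting local inverse is $J$-holomorphic. Since $f$ is smooth and $df_p \colon T_pX \to T_{f(p)}Y$ is a linear isomorphism, the classical inverse function theorem produces open neighbourhoods $U$ of $p$ and $V$ of $f(p)$ together with a smooth diffeomorphism $f|_U \colon U \to V$ with smooth inverse $f^{-1} \colon V \to U$. So the only substantive task is to check the Cauchy--Riemann-type relation for $f^{-1}$.

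For this, fix $q \in U$ and set $r=f(q)\in V$. Differentiating $f^{-1}\circ f = \id_U$ at $q$ gives $df^{-1}_r \circ df_q = \id_{T_qX}$, hence $df^{-1}_r = (df_q)^{-1}$. The $J$-holomorphicity of $f$ says
\[
df_q \circ (J_X)_q = (J_Y)_r \circ df_q.
\]
Pre- and post-composing both sides with $(df_q)^{-1} = df^{-1}_r$ yields
\[
(J_X)_q \circ df^{-1}_r = df^{-1}_r \circ (J_Y)_r,
\]
which is exactly the $J$-holomorphicity equation for $f^{-1}$ at $r$. As $q$ varies over $U$, the base point $r = f(q)$ varies over $V$, so $f^{-1}$ is $J$-holomorphic on $V$, and $f|_U \colon U \to V$ is a local $J$-holomorphic isomorphism at $p$.

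There is no real obstacle here: the inverse function theorem is pointwise and smooth, and the integrability or non-integrability of $J_X, J_Y$ plays no role. The one point worth emphasizing is that $J$-holomorphicity is a pointwise $\RR$-linear condition on the differential, so it survives passage to the (smooth) inverse by pure linear algebra; no elliptic regularity or analytic bootstrap is required.
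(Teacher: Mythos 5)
Your proof is correct and follows essentially the same route as the paper: apply the smooth inverse function theorem, then invert the pointwise linear relation $df_q \circ (J_X)_q = (J_Y)_{f(q)} \circ df_q$ to obtain the $J$-holomorphicity of $f^{-1}$. Your version is in fact slightly more careful than the paper's, since you verify the relation at every point $q$ of the neighbourhood rather than only at $p$.
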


\begin{proof}
Suppose that $ f \colon X \rightarrow Y$ is a J-holomorphic map whose derivative $df_p$ at the 
point $p$ is an isomorphism. 
By the inverse function theorem in the smooth category, locally there 
exists an inverse map $f^{-1}$ to $f$. 
It is enough to notice that 
  \[df_{p}\circ {J_{1}}_{p} = {J_{2}}_{f(p)}\circ df_{p}\]
implies, 
\[{J_{1}}_{p} \circ {df_{p}}^{-1}= {df_{p}}^{-1} \circ {J_{2}}_{f(p)},\]
i.e.,  $f^{-1}$ is J-holomorphic.
\end{proof}

 \begin{Remark} \label{resres} 
Let $M,N$ be compact complex manifolds, $U \subset M$, $V \subset N$ open dense subsets, 
and $g \colon U \to V$ a holomorphic map whose graph $G \subset M \times N$ 
is constructible, 
i.e., it is a Boolean combination 
of complex analytic subvarieties of $M \times N$. 
Resolve $\Gamma$ to a compact complex manifold $\tilde{\Gamma}$ by Hironaka's \cite{hiro} 
resolution of singularities $\phi \colon \tilde{\Gamma} \to \Gamma$.  
The set $U$ is naturally embedded in $\Gamma \subset M \times N$. 
The proper transform $\phi^{-1}U$ of $U$ is a Zariski-open dense subset of $\tilde{\Gamma}$. 
By restricting $U$ we may assume that $\phi|_{\phi^{-1}U}$ is an isomorphism onto $U$. 
Composing $\phi|_{\phi^{-1}U}$ with the projection $\pi_N \colon M \times N \to N$ we get 
the map $g$. Thus the holomorphic map 
$\tilde{g}=\pi_N \circ \phi$ can be considered an expansion of $g$.

Now, consider a holomorphic shadow $A \subset X^n$ (in $\H$), that is an image of a $J$-holomorphic 
$f \colon M \to X^n$ (in $\F$). 
The map $g \colon U \to N$ induced by the complex equivalence relation 
${\sim}_f$ is constructible. 
(See \cite[Section 2.2]{moosa}.)
Hence, (by expanding $g$ to $\tilde{g}$ and replacing $M$ by $\tilde{\Gamma}$), 
we may assume that the map induced by ${\sim}_f$ is $g \colon M \to N$.

 \end{Remark}

\begin{Remark} \label{psi}

Given a J-holomorphic 
$f \colon M \to X^{n+k}$, 
and a projection $\pi \colon X^{n+k} \to X^n$, let $U_S$ and $U_{\pi(S)}$ be umbras constructed as in Lemma 
\ref{greq}. 
We can assume that the restriction of $\pi$ 
to $U_S$ is a holomorphic and proper projection onto $U_{\pi(S)}$. 
To see this, first apply Lemma \ref{greq} and remark \ref{resres} to get  
$g_1 \colon M \to N_1$, and $g_2 \colon M \to N_2$, such that
for $i=1,2$, the map $g_i$ restricted to a Zariski-open dense subset $U$ of $M$
is a submersion onto 
a Zariski-open dense subset $V_i$ of $N_i$, and that ${g_i|_{U}}^{-1}(V_i)=U$.
Locally, (up to a holomorphic isomorphism), there are systems of holomorphic coordinates, 
in which $g_i \colon U \to V_i$ is given by the projection 
$(u_1,\ldots,u_l) \to (u_1,\ldots,u_{k_i})$; where $l > k_1 > k_2$. 
This gives a map $\psi \colon V_1 \to V_2$, 
mapping $(u_1,\ldots,u_{k_1})$ to $(u_1,\ldots,u_{k_2})$.
Applying Remark \ref{resres}, we expand $\psi$ to 
$\tilde{\psi} \colon \tilde{N_1} \to N_2$ between compact complex manifolds
(hence proper), using a resolution of singularities $\phi \colon{\tilde{N_1}} \to N_1$.
 By restricting $U,V_1,V_2$, we assume that $\phi|_{\phi^{-1}{V_1}}$ is an isomorphism onto $V_1$,
 and $\tilde{\psi}^{-1}{V_2}= \phi^{-1}{V_1}$, (we denote $\phi^{-1}{V_1}$ again by $V_1$).  
So 
the following diagram commutes.

 \xymatrix{
 X^{n+k}  &  M \ar[l]_{f} 
 & U  \ar[l]_{\supset} \ar[d]_{{\phi^{-1}}|_{V_1} \circ g_1} \ar[r]^{\Id}
  &  U \ar[d]^{g_2} \ar[r]^{\subset} & M  \ar[r]^{\pi \circ f} & X^n\\
  & \tilde{N_1}  & {V_1} \ar[ull]^{h_1} \ar[l]_{\supset} \ar[r]^{\tilde{\psi}} 
  &  V_2 \ar[urr]_{h_2} \ar[r]^{\subset} & N_2  }

The map $\psi$ as the restriction ${\tilde{\psi}}_{V_1} \colon V_1 \to V_2$ is proper.
(A compact set $A$ in the open set $V_2$ is compact in $N_2$, so 
$A'=\tilde{\psi}^{-1}A=\psi^{-1}A$ is 
 compact in $N_1$ and contained in $V_1$, hence is compact in $V_1$.)

\end{Remark}

\begin{Claim} \label{alun} Let $A$ be a holomorphic shadow in $\H$. If $A_1$ and $A_2$ are shadow casters (or shadow umbras) 
of $A$, then there is a 
holomorphic isomorphism  between  open dense subsets of them. \end{Claim}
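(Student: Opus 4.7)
The plan is to construct the desired holomorphic isomorphism directly as the composition $\phi = h_2^{-1}\circ h_1$, where $h_i\colon V_i\to U_i^A$ are the J-holomorphic bijections supplied by Lemma \ref{greq} on open dense $V_i\subset N_i$ (shrunk so that $h_i^{-1}$ is also J-holomorphic, by Lemma \ref{invtheo} after removing the zero locus of $\det h_i$). Since the $V_i$ are open subsets of the complex manifolds $N_i$, a J-holomorphic map between them is holomorphic in the classical sense, so the main content is to verify that the natural domain of $\phi$ is open dense. Following Remark \ref{resres}, I would fix a common J-holomorphic $f\colon M\to X^n$ with $f(M)=A$ and holomorphic maps $g_i\colon M\to N_i$ defined on all of $M$ that induce the equivalence relation $\sim_f$, so that $f=h_i\circ g_i$ on $U=g_i^{-1}(V_i)$.

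Setting $W_1 = V_1\cap h_1^{-1}(U_2^A)$ and $W_2 = V_2\cap h_2^{-1}(U_1^A)$, the map $\phi\colon W_1\to W_2$ is a bijection, J-holomorphic by composition (using J-holomorphicity of $h_2^{-1}$ on $V_2$), hence holomorphic; symmetrically $\phi^{-1}$ is holomorphic. For the case of shadow umbras, the diffeomorphisms $h_1, h_2$ identify $W_1, W_2$ with the common subset $U_1^A\cap U_2^A$ inside $U_1^A$ and $U_2^A$; on this intersection the identity is the desired holomorphic isomorphism (both complex structures being the restriction of $J$).

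The main obstacle is showing $W_1$ is dense in $V_1$ (hence in $N_1$; density of $W_2$ is symmetric). The complement lies in the closed set $h_1^{-1}(P_{A,2})$, where $P_{A,2}$ is a penumbra of $A$. By Claim \ref{inc}, $f^{-1}(P_{A,2})$ is a complex subvariety of $M$, proper since $P_{A,2}\subsetneq A=f(M)$; by the proper mapping theorem, $B = g_1(f^{-1}(P_{A,2}))$ is a complex subvariety of $N_1$ containing $h_1^{-1}(P_{A,2})$. I claim $B\neq N_1$: otherwise, on a Zariski-open dense subset of $V_1$ where the fibers of $g_1$ lie in $U$ and equal single $\sim_f$-classes, each such $v=g_1(u)$ would arise from some $u\in U\cap f^{-1}(P_{A,2})$, giving $h_1(v)=f(u)\in P_{A,2}$; hence $h_1^{-1}(P_{A,2})$ would be a dense closed subset of $V_1$, equal to $V_1$, forcing $U_1^A\subseteq P_{A,2}$ and contradicting the density of $U_1^A$ in $A$. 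Thus $B$ is a proper subvariety of $N_1$, so nowhere dense, and $W_1$ is open dense.
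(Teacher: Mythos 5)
The paper states Claim \ref{alun} with no proof at all, so there is no argument of the author's to compare yours against; your proposal supplies exactly the missing content, and its overall strategy is the natural one: the maps $h_i\colon V_i\to U_i^A$ from Lemma \ref{greq} are holomorphic bijections with J-holomorphic inverses (Lemma \ref{invtheo}), so the whole issue is whether $h_2^{-1}\circ h_1$ is defined on an open dense subset of $N_1$, and you correctly reduce this to showing that $h_1^{-1}(P_{A,2})$ sits inside a proper closed subvariety $B=g_1(f^{-1}(P_{A,2}))$ of $N_1$, using Claim \ref{inc} and the proper mapping theorem. One remark: your opening assumption of ``a common $f\colon M\to X^n$'' for both casters is unnecessary and, read literally, restrictive, since two shadow casters of $A$ may arise from different presentations $f_1\colon M_1\to X^n$ and $f_2\colon M_2\to X^n$ of the same image $A$; but your density argument only uses $g_1$, $f_1$, and the fact that $P_{A,2}$ is a holomorphic shadow properly contained in $A$, so it applies verbatim in the general case and you should simply drop that assumption.

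Two steps need tightening. First, the phrase ``on a Zariski-open dense subset of $V_1$ where the fibers of $g_1$ lie in $U$'' is not justified: $g_1(M\smallsetminus U)$ is a subvariety of $N_1$ of dimension at most $\dim M-1$, which can still be all of $N_1$ when the fibers of $g_1$ are positive-dimensional, so such a dense subset need not exist. The repair is shorter than what you wrote: $h_1\circ g_1$ is continuous on the open set $g_1^{-1}(V_1)$ and agrees with $f$ on the dense subset $U$, hence $f=h_1\circ g_1$ on all of $g_1^{-1}(V_1)$; therefore any $u\in f^{-1}(P_{A,2})$ with $g_1(u)=v\in V_1$ gives $h_1(v)=f(u)\in P_{A,2}$ directly, and $B=N_1$ would force $U_1^A=h_1(V_1)\subseteq P_{A,2}$, hence $A=\cl(U_1^A)\subseteq P_{A,2}$, contradicting $P_{A,2}\subsetneq A$. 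Second, $W_1=V_1\cap h_1^{-1}(U_2^A)$ contains the open dense set $V_1\smallsetminus B$ but need not itself be open; to produce an honest holomorphic isomorphism you should pass to $O_1=(V_1\smallsetminus B_1)\cap\phi^{-1}(V_2\smallsetminus B_2)$ and its image, on which $\phi$ and $\psi=h_1^{-1}\circ h_2$ are mutually inverse holomorphic maps between open dense subsets (openness of $O_1$ coming from continuity of $\phi$, density from $\phi^{-1}(B_2)$ being a proper analytic subset). With these adjustments the proof is complete and fills a genuine omission in the paper.
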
  

\begin{Claim} \label{irred} Let $C$ be a holomorphic shadow in $\H$.  
Then $C$ is irreducible as a holomorphic shadow if and only if a shadow caster of $C$ is irreducible as a 
complex variety. 
\end{Claim}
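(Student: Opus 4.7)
The plan is to exploit the almost complex isomorphism $h \colon V \to U_C$ supplied by Lemma \ref{greq} --- where $V$ is a Zariski-open dense subset of the shadow caster $N$ and $U_C$ is a dense umbra inside the shadow $C$ --- to translate decompositions across the correspondence between $N$ and $C$. I will prove the contrapositive in each direction.

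For ``$N$ reducible implies $C$ reducible,'' suppose $N = N_1 \cup N_2$ with each $N_i$ a proper Zariski-closed subvariety of $N$. Using the expanded $g \colon M \to N$ of Remark \ref{resres}, set $M_i := g^{-1}(N_i)$; these are complex analytic subvarieties of $M$ with $M = M_1 \cup M_2$. Resolving $M_i$ and composing with $f$ presents $C_i := f(M_i)$ as a holomorphic shadow, and $C = f(M) = C_1 \cup C_2$. A direct computation on the submersion locus $U$ using $f = h \circ g$, combined with Zariski-density of $M_i \cap U$ in $M_i$ (from the fiber dimension theorem), gives $C_i = \cl(h(V \cap N_i))$. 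For properness, Zariski-density of $V$ in $N$ forces $V \cap N_i \subsetneq V$, so $h(V \cap N_i)$ is a proper $C^\infty$-closed subset of $U_C$; if $C_i$ equalled $C$, then $h(V \cap N_i)$ would be $C^\infty$-dense in $C$ and hence in $U_C$, forcing $h(V \cap N_i) = U_C$ and $V \cap N_i = V$, a contradiction.

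For ``$C$ reducible implies $N$ reducible,'' suppose $C = C_1 \cup C_2$ with $C_i \subsetneq C$ holomorphic shadows. The key analytic input is that each $h^{-1}(C_i) \subset V$ is a complex analytic subset of $V$. By Claim \ref{inc}, $S_i := f^{-1}(C_i)$ is a complex analytic subvariety of $M$; the identity $f = h \circ g$ on $U$ gives $S_i \cap U = g^{-1}(h^{-1}(C_i))$; and since $g \colon U \to V$ is a surjective holomorphic submersion, this saturated analytic subset of $U$ descends --- locally, in normal coordinates for the submersion --- to an analytic subset $h^{-1}(C_i) \subset V$. Let $N_i$ be the Zariski-closure of $h^{-1}(C_i)$ in $N$, a complex subvariety of $N$. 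From $V \subset h^{-1}(C_1) \cup h^{-1}(C_2)$ combined with Zariski-density of $V$, we get $N = N_1 \cup N_2$. Since $C_i \subsetneq C$ together with density of $U_C$ in $C$ force $U_C \not\subset C_i$, we have $h^{-1}(C_i) \subsetneq V$, a proper analytic subset; hence $\dim N_i = \dim h^{-1}(C_i) < \dim V = \dim N$, so $N_i \subsetneq N$, contradicting irreducibility of $N$.

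The main technical obstacle is the analyticity step in the reverse direction: the pulling-back-diagonals property (Lemma \ref{dgen}) is formulated for J-holomorphic maps from compact complex manifolds, and so does not directly apply to $h \colon V \to X^n$ with its non-compact domain. The fix is to lift the question to the compact $M$ through $f = h \circ g$, apply Claim \ref{inc} there to obtain analyticity of $S_i \subset M$, and then descend the saturated analytic subset $S_i \cap U = g^{-1}(h^{-1}(C_i))$ along the holomorphic submersion $g$ back to $V$. The descent step is local and standard, amounting to the observation that in adapted coordinates $g$ is a holomorphic projection, under which saturated analytic subsets descend to analytic subsets of the target.
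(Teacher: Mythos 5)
The paper states Claim \ref{irred} without proof, so there is no official argument to compare against; what you have written is an attempt to fill that gap, and your overall strategy --- transporting decompositions back and forth through the bijection $h\colon V\to U_C$ and the factorization $f=h\circ g$ on $U$ --- is the right one. Both directions are essentially sound; in particular the descent step in the reverse direction (a $g$-saturated analytic subset of $U$ descends to an analytic subset of $V$ because $g$ is locally a coordinate projection and saturation makes the subset a local product) is correct and is exactly the point that genuinely needs an argument.

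Two spots need tightening. First, in the forward direction, the parenthetical ``Zariski-density of $M_i\cap U$ in $M_i$ (from the fiber dimension theorem)'' is not justified: after the expansion of Remark \ref{resres}, $g^{-1}(N_i)$ may acquire irreducible components lying entirely in $M\smallsetminus U$ (for instance exceptional sets of the resolution contracted into $N_i$), and on those components $f$ need not land in $\cl\bigl(h(V\cap N_i)\bigr)$. The fix is to define $C_i:=f\bigl(\cl_{\C}\bigl((g|_U)^{-1}(N_i\cap V)\bigr)\bigr)=\cl\bigl(h(V\cap N_i)\bigr)$ directly; since $C=\cl(h(V))=\cl\bigl(h(V\cap N_1)\bigr)\cup\cl\bigl(h(V\cap N_2)\bigr)$, the union identity survives and your properness argument (closedness of $h(V\cap N_i)$ in $U_C$ plus injectivity of $h$) then goes through. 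Second, in the reverse direction the assertion $\dim N_i=\dim h^{-1}(C_i)$ for $N_i$ the Zariski closure is not automatic: the Zariski closure in $N$ of an analytic subset of the open set $V$ can be strictly larger when the subset fails to extend analytically across $N\smallsetminus V$. But you do not need the closures at all: once you know $V=h^{-1}(C_1)\cup h^{-1}(C_2)$ with each $h^{-1}(C_i)$ a proper analytic subset of $V$ (proper because $h^{-1}(C_i)=V$ would give $U_C\subseteq C_i$ and hence $C=\cl(U_C)\subseteq C_i$), and that $V$ is irreducible whenever $N$ is, a dimension count inside $V$ already yields the contradiction. With these repairs your argument is a complete proof of the claim.
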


\begin{Proposition} \label{stra}
 A holomorphic shadow $S \subset X^n$ in $\H$ can be decomposed as 
 $$S=S^{(r)} \cup S^{(r-1)} \cup \ldots \cup S^{(0)},$$
 where for all $i$, $S^{(i)}$ is an i-dimensional integrable almost complex 
submanifold 
 of $X^n$, and $\cl({S^{(i)}}) \subseteq S^{(i)} \cup S^{(i-1)} \cup \ldots \cup S^{(0)}$.
 (Here $\cl$ can be interpreted  in the Z-topology or in the  $C^{\infty}$ topology.
 \end{Proposition}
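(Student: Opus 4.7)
The plan is to induct on the complex dimension $r$ of a shadow caster of $S$. Since by Claim \ref{alun} any two shadow casters of $S$ are holomorphically isomorphic on open dense subsets, this dimension is well defined; denote it $\dim S$. The base case $r=0$ is immediate: by Lemma \ref{greq} the umbra $U_S$ is a $0$-dimensional complex manifold, hence finite, and being dense in the compact set $S \subset X^n$ it equals $S$; set $S^{(0)}=S$.

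For the inductive step I would apply Lemma \ref{greq} to $S$, producing an umbra $U_S \subseteq S$ which is an $r$-dimensional integrable almost complex submanifold of $X^n$ dense in $S$, together with a penumbra $P_S \in \H$ with $\dim P_S < r$ and $S \smallsetminus U_S \subseteq P_S$. Since $\H$ is closed under finite intersections (the Corollary following Claim \ref{inc}), the set $S' := S \cap P_S$ is a holomorphic shadow in $\H$ with $\dim S' \leq \dim P_S < r$. By the inductive hypothesis there is a decomposition $S' = {S'}^{(r-1)} \cup \cdots \cup {S'}^{(0)}$ into integrable almost complex submanifolds satisfying the closure condition. Define $S^{(r)} := U_S$ and $S^{(i)} := {S'}^{(i)}$ for $0 \le i < r$.

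Then, because $S \smallsetminus U_S \subseteq S \cap P_S = S'$,
$$S \;=\; U_S \cup (S \smallsetminus U_S) \;\subseteq\; S^{(r)} \cup S' \;=\; \bigcup_{i=0}^{r} S^{(i)} \;\subseteq\; S,$$
so equality holds. For the closure axiom on the top stratum: $S$ is the image of a compact complex manifold, hence compact and therefore closed in $X^n$ in the $C^\infty$ topology, and Z-closed since it is a holomorphic shadow; thus $\cl(S^{(r)}) = \cl(U_S) \subseteq S = \bigcup_{i=0}^{r} S^{(i)}$ in either topology. For $i<r$, the closure condition on $S^{(i)}={S'}^{(i)}$ is inherited from the inductive conclusion applied to $S'$.

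The whole argument rests on Lemma \ref{greq}, which is already established; the remaining work is bookkeeping. The only subtle point is the purity of the top stratum when $S$ is reducible: if $S$ has irreducible components of differing dimensions (the components exist by the DCC proved in Claim \ref{DCC}), the shadow caster of $S$ is not irreducible and the map $h$ produced in Lemma \ref{greq} may fail to witness an $r$-dimensional submanifold on the lower-dimensional components. This is handled by first splitting $S$ into its irreducible components, stratifying each separately by the induction above, and then taking the union of the resulting strata of equal dimension.
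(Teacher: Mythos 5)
The paper states Proposition \ref{stra} with no written proof; it is evidently meant to be read off by iterating Lemma \ref{greq}, and that is exactly the route you take, so your argument supplies the intended proof rather than an alternative one. Your induction on the shadow-caster dimension is legitimate: the quantity is well defined by Claim \ref{alun}, and the strict drop $\dim(S\cap P_S)<r$ that makes the induction terminate follows, for irreducible $S$, from the fact that the penumbra is a holomorphic shadow properly contained in $S$ together with Claim \ref{di} (whose proof, via Claim \ref{irred} and the corresponding fact for subvarieties, does not rely on this proposition, so there is no circularity). The base case and the verification of the closure condition on each stratum are correct.

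The one step that needs repair is your treatment of reducible $S$: after stratifying the irreducible components $S_1,\ldots,S_m$ (which exist by Claim \ref{DCC}) you propose to ``take the union of the resulting strata of equal dimension,'' but a union of two $i$-dimensional integrable almost complex submanifolds is in general not an $i$-dimensional submanifold at points where they meet --- two $J$-holomorphic spheres crossing at a point already show this. The fix stays entirely inside your machinery. Let $r=\max_j \dim S_j$ and set $T=\bigcup_{\dim S_j<r} S_j \;\cup\; \bigcup_j P_{S_j} \;\cup\; \bigcup_{j\neq k}(S_j\cap S_k)$; each constituent is a holomorphic shadow in $\H$ (the intersections by the Corollary to Claim \ref{inc}) that is a proper subset of some irreducible $S_j$, hence of dimension $<r$ by Claim \ref{di}, so $T$ is a shadow of dimension $<r$. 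Define $S^{(r)}=\bigcup_{\dim S_j=r}\bigl(U_{S_j}\smallsetminus T\bigr)$: each piece is open in $U_{S_j}$, hence still an $r$-dimensional integrable submanifold, and the pieces are separated from one another and from the other components because the pairwise intersections lie in $T$ and the $S_k$ are closed; thus $S^{(r)}$ is an honest $r$-dimensional submanifold. Since $S\smallsetminus S^{(r)}\subseteq T$, you may recurse on $T$ to produce the lower strata, and the closure condition holds because $\cl(S^{(r)})\subseteq S=S^{(r)}\cup T$ with $T$ exhausted by strata of index $<r$. With this adjustment your proof is complete.
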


\begin{Definition}\label{dim}

 If $S^{(r)} \neq \emptyset$, 
we say that $r$ is the \emph{dimension} of $S$, and denote it by $\dim S$. 
\end{Definition}

The dimension of a holomorphic shadow $C \in \H$, 
equals the dimension as a complex manifold of a 
shadow umbra $U_C$ of $C$, which equals the dimension of a related shadow caster $N_C$.  
By Claim \ref{alun}, the dimension of a holomorphic shadow is well defined.

The following claim is clear from our definition of dimension.

\begin{Claim} \label{basicdim}
Let $C_1$ and $C_{2}$ be holomorphic
shadows in $\H$, then
 \begin{description}
  \item [(DP)] The dimension of a point is $0$;
        \item [(DU)] $\dim(C_{1} \cup  C_{2}) = \max\{\dim C_{1},\dim C_{2}\};$
  \end{description}
\end{Claim}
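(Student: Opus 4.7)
My plan is to deduce both parts from Proposition \ref{stra} and Definition \ref{dim}, preceded by a monotonicity observation: if $A\subseteq B$ are holomorphic shadows in $\H$, then $\dim A\leq\dim B$. This follows because, by Lemma \ref{greq}, $A$ contains an umbra $U_A\subseteq A\subseteq B$ that is an integrable almost complex submanifold of complex dimension $\dim A$; in any stratification of $B$ supplied by Proposition \ref{stra}, the non-top strata have strictly smaller complex (hence real) dimension, so a submanifold of real dimension $2\dim A$ contained in $B$ forces the top stratum to have complex dimension at least $\dim A$ (a submanifold of real dimension $2r$ cannot be covered by finitely many manifolds of strictly smaller real dimension, by a standard measure-theoretic or Baire-category argument).

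For (DP), the point $\{p\}$ is a holomorphic shadow (the image of the constant map from a point), with trivial stratification $S^{(0)}=\{p\}$; so $\dim(\{p\})=0$ by Definition \ref{dim}.

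For (DU), set $r:=\max\{\dim C_1,\dim C_2\}$, and without loss of generality $r=\dim C_1$. Monotonicity immediately gives $\dim(C_1\cup C_2)\geq\dim C_1=r$. For the reverse inequality I would combine the stratifications of $C_1$ and $C_2$ produced by Proposition \ref{stra}. Note that $C_1\cap C_2$ is itself a holomorphic shadow in $\H$, by the corollary after Claim \ref{inc} on closure under finite intersections; by monotonicity, $\dim(C_1\cap C_2)\leq\min(\dim C_1,\dim C_2)\leq r$. I would then take the candidate top stratum to be $(C_1^{(r_1)}\cup C_2^{(r_2)})\setminus(C_1\cap C_2)$ (with $r_i=\dim C_i$), possibly further restricted to a Zariski-open dense subset where the two components either coincide or are disjoint, which yields an $r$-dimensional integrable almost complex submanifold of $X^n$. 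The excluded pieces, together with the lower strata $C_i^{(j)}$ for $j<r_i$, are reassembled inductively by dimension into strata of dimension strictly less than $r$.

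The main technical obstacle is this combining step: verifying that the proposed top stratum is genuinely locally an integrable almost complex submanifold of dimension $r$, and that the closure condition $\cl(T^{(i)})\subseteq T^{(i)}\cup\ldots\cup T^{(0)}$ of Proposition \ref{stra} survives the combination. This reduces to the closure conditions for the individual stratifications of $C_1$ and $C_2$ together with the strict dimension bound on $C_1\cap C_2$, plus a local analysis at each point of the meeting locus to absorb non-submanifold behaviour into a strictly lower-dimensional stratum.
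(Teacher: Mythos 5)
The paper offers no argument for this claim---it simply records that it ``is clear from our definition of dimension'', the definition being that $\dim S$ is the complex dimension of an umbra $U_S$ (equivalently, of a shadow caster $N_S$) as in Lemma \ref{greq}, Proposition \ref{stra} and Definition \ref{dim}. Your monotonicity lemma and your treatment of (DP) are fine, and your covering argument (a manifold of real dimension $2r$ cannot be covered by finitely many submanifolds of strictly smaller dimension) is exactly the right tool. The gap is in the upper bound of (DU): the combined stratification you propose breaks down precisely when $\dim(C_1\cap C_2)=r$, e.g.\ when $C_1=C_2$ or when $C_1$ and $C_2$ share an $r$-dimensional irreducible component. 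In that case your candidate top stratum $(C_1^{(r_1)}\cup C_2^{(r_2)})\smallsetminus(C_1\cap C_2)$ discards an $r$-dimensional piece, and your closing claim that the excluded pieces are ``reassembled inductively\ldots into strata of dimension strictly less than $r$'' is supported only by $\dim(C_1\cap C_2)\le r$, which is not the strict inequality the reassembly requires; taken literally, the construction would then produce a ``stratification'' of $C_1\cup C_2$ with no $r$-dimensional stratum, which your own covering argument shows cannot satisfy the conditions of Proposition \ref{stra}. (A further, smaller, slip: when $r_1\neq r_2$ your proposed top stratum mixes manifolds of two different dimensions.)

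The fix is to bypass the combining step entirely. Since $C_1\cup C_2$ is itself a holomorphic shadow in $\H$ (Claim \ref{123}), Lemma \ref{greq} provides an umbra $U_{C_1\cup C_2}$, a complex manifold of complex dimension $\dim(C_1\cup C_2)$ contained in $C_1\cup C_2$. Covering it by the finitely many strata of $C_1$ and of $C_2$---all of complex dimension at most $r=\max\{\dim C_1,\dim C_2\}$---and applying your measure/category argument gives $\dim(C_1\cup C_2)\le r$; your monotonicity lemma gives the reverse inequality. This uses only tools you have already set up, requires no local analysis along $C_1\cap C_2$, and is presumably the kind of argument the paper has in mind when it calls the claim clear.
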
 

Claim \ref{irred} and axiom (DI) for subvarieties of a compact complex manifold imply the following claim.
\begin{Claim} \label{di}
(DI) If $C_{2}$ is irreducible and $C_{1} \subseteq C_2, C_{1} \neq C_2$,    
              then $\dim C_{1} < \dim C_{2}$;
\end{Claim}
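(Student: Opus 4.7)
The plan is to reduce Claim \ref{di} to axiom (DI) for complex subvarieties of compact complex manifolds, by realising a caster of $C_1$ as a proper subvariety of a caster of $C_2$.

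Since $C_2$ is irreducible, Claim \ref{irred} provides a shadow caster $N_{C_2}$ that is irreducible as a complex analytic variety, with $\dim C_2 = \dim N_{C_2}$ by Definition \ref{dim}. Fix a representative $f_2 \colon M_2 \to X^n$ of $C_2$, and let $g_2 \colon M_2 \to N_{C_2}$ be the map induced by the equivalence relation $\sim_{f_2}$, extended to all of $M_2$ via Remark \ref{resres}; let $h_2 \colon V_{C_2} \to X^n$ be the associated J-holomorphic map from a Zariski-open dense $V_{C_2} \subseteq N_{C_2}$, so that $U_{C_2} = h_2(V_{C_2})$ is a $C^\infty$-dense umbra of $C_2$.

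By Claim \ref{inc}, $W = f_2^{-1}(C_1) = f_2^{-1}(C_1 \cap C_2)$ is a complex subvariety of $M_2$. Since $M_2$ is compact the map $g_2$ is proper, so by the proper mapping theorem $D := g_2(W)$ is a complex subvariety of $N_{C_2}$. The critical step is to verify $D \subsetneq N_{C_2}$. Otherwise, every $v \in V_{C_2}$ would have a preimage $m \in W$, which may be taken in $U = M_2 \smallsetminus P$ (with $P$ as in Lemma \ref{greq}), so that $h_2(v) = f_2(m) \in C_1$. This would force $U_{C_2} \subseteq C_1$; but $U_{C_2}$ is $C^\infty$-dense in $C_2$ (Lemma \ref{greq}) and $C_1$ is $C^\infty$-closed (being compact in a Hausdorff space), so we would get $C_2 \subseteq C_1$, contradicting $C_1 \neq C_2$.

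With $D \subsetneq N_{C_2}$ and $N_{C_2}$ irreducible, (DI) for complex subvarieties yields $\dim D < \dim N_{C_2} = \dim C_2$. To finish, resolve $W$ to a compact complex manifold $\widetilde{W} \xrightarrow{\phi} W$ and set $f_1 = f_2|_W \circ \phi \colon \widetilde{W} \to X^n$, whose image is $C_1$. Because $f_1$ factors through $f_2|_W$, the relation $\sim_{f_1}$ on $\widetilde{W}$ coincides with the pullback of $\sim_{f_2}$, so the composition $g_2|_W \circ \phi$ realises a caster for $C_1$ with image (birational to) $D$. By Claim \ref{alun} any two casters of $C_1$ share a common open dense subset, hence the same dimension, giving $\dim C_1 = \dim D$. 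Combining yields $\dim C_1 < \dim C_2$, as required.

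The main obstacle is the final identification $\dim C_1 = \dim D$: one must check that the construction of Lemma \ref{greq} applied to $f_1$ produces a caster birational to $D$, i.e.\ that $\sim_{f_1}$ and the restriction of $\sim_{f_2}$ to $\widetilde{W}$ define the same quotient up to the equivalences of Claim \ref{alun}. Once this is in hand, the dimension comparison is purely formal from (DI) on $N_{C_2}$.
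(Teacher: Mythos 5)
Your proof is correct and follows exactly the route the paper intends: the paper offers no written proof of (DI), merely asserting that it follows from Claim \ref{irred} together with axiom (DI) for complex subvarieties of a compact complex manifold, and your argument (realising a caster of $C_1$ as the proper subvariety $D=g_2(f_2^{-1}(C_1))$ of the irreducible caster $N_{C_2}$, using density of the umbra and $C^\infty$-closedness of $C_1$ to rule out $D=N_{C_2}$) is a faithful and more detailed implementation of that one-line sketch. The residual point you flag, that the caster of $C_1$ produced from $\sim_{f_1}$ agrees up to birational equivalence with $D$, is of the same nature as the well-definedness issues the paper already delegates to Claim \ref{alun}, so no further action is needed.
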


\begin{Claim} \label{FC}
        (FC) Let 
             $S \in \H$ be a holomorphic shadow in $X^{n+m}$.
             Let $pr$ stand for the projection $X^{n+m} \rightarrow X^{n}.$
             Then, 
\begin{equation} \label{psk}
p(S,k)=\{a \in X^{n} \mid \dim(S \cap {pr}^{-1}(a)) > k \}
\end{equation}
is h-constructible. 
If $k \geq \min \dim({pr}^{-1}a  \cap S)$, then $p(S,k)$ is contained in a holomorphic shadow that is  a proper subset of $pr(S)$.
\end{Claim}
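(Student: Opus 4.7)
The plan is to reduce the fiber-dimension question to the classical semicontinuity of fiber dimension for proper holomorphic maps between compact complex analytic spaces, and then transport the result back to the category of holomorphic shadows via the shadow casters. I would argue by induction on $\dim S$; the base case $\dim S = 0$ (so $S$ is a finite set of points) is immediate.

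Let $f \colon M \to X^{n+m}$ be a J-holomorphic map from a compact complex manifold with $f(M) = S$. I would first invoke Remark \ref{psi} applied to $f$ and $pr$, obtaining compact complex analytic spaces $\tilde{N}_1$ and $N_2$, a proper holomorphic map $\tilde{\psi} \colon \tilde{N}_1 \to N_2$, and J-holomorphic biholomorphisms $h_1 \colon V_1 \to U_S$, $h_2 \colon V_2 \to U_{pr(S)}$ on Zariski-open dense subsets, with $pr \circ h_1 = h_2 \circ \tilde{\psi}$ on $V_1$ and $\tilde{\psi}(V_1) = V_2$. Then I would invoke the classical Cartan--Remmert semicontinuity of fiber dimension: the set $W_k = \{b \in N_2 \mid \dim \tilde{\psi}^{-1}(b) > k\}$ is a complex analytic subvariety of $N_2$, and is a proper subvariety precisely when $k$ is at least the minimum fiber dimension of $\tilde{\psi}$.

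The translation step is this: for $a \in U_{pr(S)}$ with $b = h_2^{-1}(a) \in V_2$, the biholomorphism $h_1$ carries $\tilde{\psi}^{-1}(b) \cap V_1$ onto $U_S \cap pr^{-1}(a)$, identifying the fiber on the umbra side with a fiber of $\tilde{\psi}$. By Claim \ref{subim} applied in $N_2$, $W_k$ is a holomorphic shadow in $N_2$; extending $h_2$ to a J-holomorphic map from a resolution of $N_2$ into $X^n$ via Remark \ref{resres} and applying Claim \ref{123}, the image $\tilde{W}_k \subseteq pr(S)$ is a holomorphic shadow in $X^n$. To complete the description of $p(S,k)$, I would use Lemma \ref{greq} to produce a penumbra $P_S \subseteq S$ with $\dim P_S < \dim S$ and $S \setminus U_S \subseteq P_S$; then since $(S \setminus U_S) \cap pr^{-1}(a) \subseteq P_S \cap pr^{-1}(a)$, the identity $p(S,k) = \tilde{W}_k \cup p(P_S, k)$ holds, and $p(P_S,k)$ is h-constructible by the inductive hypothesis, so the Boolean combination gives h-constructibility of $p(S,k)$.

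For the second assertion, when $k \geq \min_{a \in pr(S)} \dim(pr^{-1}(a) \cap S)$, the value $k$ is at least the minimum fiber dimension of $\tilde{\psi}$, so $W_k$ is a proper subvariety of $N_2$ and $\tilde{W}_k \subsetneq pr(S)$; the inductive assertion applied to $P_S$ (whose projection is already contained in a proper holomorphic shadow subset of $pr(S)$) supplies the corresponding proper containment for $p(P_S, k)$, yielding a proper holomorphic shadow subset of $pr(S)$ containing $p(S,k)$. The main obstacle will be justifying the fiber-dimension identity at umbra points, comparing the complex dimension of $\tilde{\psi}^{-1}(b)$ on the caster side with the shadow-dimension of $S \cap pr^{-1}(a)$, and maintaining clean bookkeeping of the penumbra contribution through the induction.
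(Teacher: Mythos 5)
Your proposal follows the same skeleton as the paper's proof: pass to the umbras via Remark \ref{psi}, detect the locus of jumping fiber dimension as a complex analytic subset on the shadow-caster side, push it forward to a holomorphic shadow in $X^n$, and absorb everything supported over the penumbra into an induction on dimension. The one substantive difference is where the analytic ``bad set'' comes from. The paper works upstairs: it writes $pr|_{U_S}$ in local holomorphic coordinates as $(p_1,\dots,p_l)$, defines $Z\subseteq U_S$ by the vanishing of the relevant minors of the Jacobian, takes the Zariski closure $W$ of $Z$ in the source caster $N_S$, and pushes forward by $f$. You instead work downstairs, invoking Cartan--Remmert upper semicontinuity of fiber dimension for the proper extension $\tilde\psi\colon\tilde N_1\to N_2$ to get $W_k$ directly in the target caster. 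Your route is arguably cleaner, since Remark \ref{psi} arranges $\tilde\psi^{-1}(V_2)=V_1$, so for $b\in V_2$ the map $h_1$ identifies $\tilde\psi^{-1}(b)$ with $U_S\cap pr^{-1}(h_2(b))$ and the fiber-dimension identity you worry about is exactly available on the umbra. Two points of bookkeeping do need repair (the paper handles the analogous issues, not entirely cleanly, by intersecting with $pr(S)\smallsetminus(pr(S))'$ and by first reducing to irreducible $S$): your identity $p(S,k)=\tilde W_k\cup p(P_S,k)$ is not an equality as written, because $\tilde W_k$ is the image of all of $W_k$ under the \emph{extension} of $h_2$ to a resolution of $N_2$, and points of $W_k\smallsetminus V_2$ may map to points $a$ of $U_{pr(S)}$ where neither $U_S\cap pr^{-1}(a)$ nor $P_S\cap pr^{-1}(a)$ has dimension $>k$; you must restrict the $\tilde W_k$ contribution to $pr(S)\smallsetminus(pr(S))'$ (equivalently, use only $h_2(W_k\cap V_2)$ and sweep the discrepancy into the h-constructible correction terms). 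You should also record the preliminary reduction to irreducible components, since both Remark \ref{psi} and the second assertion (properness of the containing shadow in $pr(S)$) are stated and used for irreducible $S$.
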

 
Notice that by Corollary \ref{inv}, there is a meaning to \eqref{psk}.

{\bf Notation:}
We say that a set is \emph{h-constructible} if it is constructible from holomorphic shadows, i.e., of the form $\cup_{i \leq k} A_i \smallsetminus B_i$, where $k$ is a natural number, $A_i$, $B_i$ are holomorphic shadows, and $B_i \subseteq A_i$.

\begin{proof}
First, we notice that for the decomposition $S = {\cup}_{i=1}^{n}{S_i}$ to irreducible components, 
$\dim({pr}^{-1}(a) \cap S)=\max \dim({pr}^{-1}(a) \cap S_i)$.
Hence    
$p(S,k) = {\cup}_{i=1}^{n}{p({S_i},{pr}|_{{S_i}},k)}$. So we may assume that $S$ is irreducible. 
Hence, by Claim \ref{prirred} and Claim \ref{irred},  so are $pr(S)$, and the shadow casters $N_S$ and $N_{pr(S)}$.

By Remark \ref{psi}, we may assume that the restriction of $pr$ to the umbra $U_S$ is a 
holomorphic and proper projection onto the umbra $U_{pr(S)}$. We identify $U_S$ and $U_{pr(S)}$ with the corresponding isomorphic Zariski-open sets in the shadow casters $N_S$ and $N_{pr(S)}$. For every $a \in pr(U_S)(=U_{pr(S)})$, we have that ${pr}^{-1}(a) \cap U_S$ is an umbra in $p^{-1}(a) \cap S$ hence $\dim ({pr}^{-1}(a) \cap S) = \dim  ({pr}^{-1}(a) \cap U_S)$.
Assume that $pr|_{U_S}$ is given by holomorphic functions
$p_1,...,p_l$. For a point in the fiber $U_S \cap pr^{-1}a$, the fact that  $\dim( pr^{-1}a \cap U_S) >k$ 
implies that 

(*) the complex rank of the Jacobian of $(p_1,...,p_l)$ at $b$ is $\leq \dim{U_S} - k$. 

This condition is equivalent to vanishing of all $\dim{U_S}-k$
minors of the Jacobian. Let $Z$ be defined by (*). Let $W$ be the Zariski closure in $N_S$ of $Z$.
Let $S'$ be a penumbra in $S$ such that $S \smallsetminus U_S \subset S'$, and $(pr(S))'$ be a penumbra in $pr(S)$ such that $pr(S) \smallsetminus U_{pr(S)} \subset (pr(S))'$. Then $p(S,k)=p(S',k) \cup ((pr(S) \smallsetminus (pr(S))') \cap f(W))$, (where $f \colon M_S \to X$ is the map giving the holomorphic shadow $S=f(M_S)$). By induction $p(S',k)$ is h-constructible, hence so is $p(S,k)$.

\end{proof}

\begin{Claim} \label{ADF}
 (ADF) Let 
$S \in \H$ be an irreducible  
  holomorphic shadow in $X^{n+m}$.
 Let ${pr}$ denote the canonical projection $X^{n+m} \rightarrow X^{n}.$ 
 Then  
\begin{equation} \label{adfeq}
\dim S = \dim pr(S) + \min_{a\in pr(S)}\{\dim ({pr}^{-1}(a) \cap S)\}.
\end{equation}
 \end{Claim}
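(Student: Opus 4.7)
I plan to reduce the additive dimension formula for shadows to its classical counterpart for proper surjective holomorphic maps between irreducible compact complex analytic varieties, via the shadow caster machinery of Lemma \ref{greq} and Remark \ref{psi}.

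Since $S$ is irreducible, Claim \ref{prirred} gives that $pr(S)$ is also irreducible, and Claim \ref{irred} then yields that their shadow casters $\tilde N_S$ and $N_{pr(S)}$ are irreducible as complex analytic varieties. Applying Remark \ref{psi} to the pair $f \colon M \to X^{n+m}$ (representing $S$) and $pr \circ f \colon M \to X^n$ (representing $pr(S)$), one obtains a proper holomorphic map $\tilde\psi \colon \tilde N_S \to N_{pr(S)}$ between irreducible compact complex analytic varieties that is compatible with $pr$ under the almost-complex isomorphisms between Zariski-open dense umbras $V_S \to U_S$ and $V_{pr(S)} \to U_{pr(S)}$. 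By Definition \ref{dim}, $\dim S = \dim \tilde N_S$ and $\dim pr(S) = \dim N_{pr(S)}$.

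The classical additive dimension formula for $\tilde\psi$ gives
\[
\dim \tilde N_S = \dim N_{pr(S)} + d, \qquad d := \min_{b \in N_{pr(S)}} \dim \tilde\psi^{-1}(b),
\]
where $d$ is the generic fiber dimension, achieved on a Zariski-open dense subset $W \subset N_{pr(S)}$; moreover by semicontinuity of fiber dimension for proper holomorphic maps, $\dim \tilde\psi^{-1}(b) \geq d$ for every $b$. For $a \in U_{pr(S)}$ corresponding to $b \in W \cap V_{pr(S)}$, the shadow $pr^{-1}(a) \cap S$ (which is in $\H$ by Claim \ref{inv}) has intersection with $U_S$ isomorphic as an almost complex manifold to $\tilde\psi^{-1}(b) \cap V_S$, so for generic such $a$ its dimension equals $d$, giving the upper bound $\min_{a\in pr(S)} \dim(pr^{-1}(a) \cap S) \leq d$.

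To establish the opposite inequality $\dim(pr^{-1}(a) \cap S) \geq d$ for every $a \in pr(S)$, I lift the problem to $M$: the preimage $(pr \circ f)^{-1}(a) \subset M$ is a non-empty complex subvariety of dimension at least $\dim M - \dim pr(S)$ by the classical fiber dimension theorem, and $f$ maps it onto $pr^{-1}(a) \cap S$ while losing at most the generic fiber dimension of $f \colon M \to S$, which equals $\dim M - \dim S$. Subtracting yields $\dim(pr^{-1}(a) \cap S) \geq \dim S - \dim pr(S) = d$. The main obstacle is making this last step rigorous in the penumbra: one must carefully compare the equivalence-relation quotient of Lemma \ref{greq} with Chevalley semicontinuity applied to $\tilde\psi$, ensuring that the fibers of $f$ restricted to $(pr \circ f)^{-1}(a)$ have the expected generic dimension even above penumbral points $a \in pr(S) \smallsetminus U_{pr(S)}$, where no direct caster fiber is available. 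Once this lower bound holds, combining with the previous paragraph gives $\min_{a \in pr(S)} \dim(pr^{-1}(a) \cap S) = d$, and hence the formula \eqref{adfeq}.
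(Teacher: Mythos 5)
Your reduction to the shadow casters via Remark \ref{psi} and the classical additive formula for the proper holomorphic map $\tilde\psi$ is the same first half as the paper's proof, and your upper bound $\min_{a\in pr(S)}\dim(pr^{-1}(a)\cap S)\leq d$ is fine. The gap is exactly where you flag it: the lower bound over the penumbra. Your lifting argument asserts that $f$ maps $(pr\circ f)^{-1}(a)$ onto $pr^{-1}(a)\cap S$ ``while losing at most the generic fiber dimension of $f\colon M\to S$,'' but that step is false in general: the fibers of $f$ restricted to the subvariety $(pr\circ f)^{-1}(a)$ need not have the generic dimension $\dim M-\dim S$. Since $f^{-1}(x)\subseteq (pr\circ f)^{-1}(a)$ for every $x\in pr^{-1}(a)\cap S$, a special fiber of $f$ of excess dimension (e.g.\ a subvariety contracted by $f$) sitting over $a$ inflates $(pr\circ f)^{-1}(a)$ without contributing to $pr^{-1}(a)\cap S$, so the subtraction $\dim(pr\circ f)^{-1}(a)-(\dim M-\dim S)$ does not bound $\dim(pr^{-1}(a)\cap S)$ from below. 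This is precisely the penumbral degeneration you name as ``the main obstacle,'' and as written it is not resolved.

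The paper avoids the per-fiber lower bound altogether. It first notes that for $a$ in the umbra $U_{pr(S)}$ the set $pr^{-1}(a)\cap U_S$ is an umbra of $pr^{-1}(a)\cap S$, so the two dimensions agree there, and then invokes the second assertion of Claim \ref{FC}: for $k=\min_{a\in pr(S)}\dim(pr^{-1}(a)\cap S)$, the locus $p(S,k)$ where the fiber dimension exceeds this minimum is contained in a holomorphic shadow that is a \emph{proper} subset of the irreducible shadow $pr(S)$. Hence the minimum is attained on the complement of a proper Z-closed subset, which (by irreducibility and (DI)) must meet the dense umbra, giving $\min_{a\in U_{pr(S)}}\dim(pr^{-1}(a)\cap U_S)=\min_{a\in pr(S)}\dim(pr^{-1}(a)\cap S)$ directly. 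If you want to keep your route, you would need to prove upper semicontinuity of fiber dimension for the shadow $S$ itself over penumbral points; the cleaner fix is to replace your final paragraph by an appeal to Claim \ref{FC}.
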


\begin{proof}
By Remark \ref{psi}, we may assume that the restriction of $pr$ to $U_S$ is a 
proper holomorphic projection onto $U_{pr(S)}$.
By the corresponding claim for complex analytic subvarieties in a complex manifold, 
this is true for shadow umbras $U_S$ and $U_{pr(S)}$. 
So, $$\dim U_S= \dim U_{pr(S)} + \min_{a\in U_{pr(S)}}\{\dim ({pr}^{-1}(a) \cap U_S)\}.$$
For every $a \in pr(U_S)(=U_{pr(S)})$, we have that ${pr}^{-1}(a) \cap U_S$ is an umbra in $p^{-1}(a) \cap S$ hence $\dim ({pr}^{-1}(a) \cap S) = \dim  ({pr}^{-1}(a) \cap U_S)$.
By Claim \ref{FC}, the minimal dimension of a fiber ${pr}^{-1}(a) \cap S$ cannot be attained on a holomorphic shadow that is a proper subset of the irreducible shadow $pr(S)$. Therefore,
 $$\min_{a\in U_{pr(S)}}\{\dim ({pr}^{-1}(a) \cap U_S)\}=
\min_{a\in pr(S)}\{\dim ({pr}^{-1}(a) \cap S)\}.$$
Since $\dim S = \dim U_S$, and $\dim pr(S) = \dim U_{pr(S)}$, 
we get \eqref{adfeq}. 

\end{proof}

\begin{Remark}
It follows from the axioms that for any holomorphic shadow $S \subseteq X^{n+m}$,
$$\dim S \geq \dim pr(S) + \min_{a\in pr(S)}\{\dim ({pr}^{-1}(a) \cap S)\}.$$ See \cite{imam1} - Fact 2.2. 
\end{Remark}

{\bf Notation:}
If $X$ is a holomorphic shadow, its dimension is already defined. Otherwise,
we assign $\dim X$ to be half the dimension of $X$ as a real manifold.
  The dimension of $ {{\Delta}^{n}}_{(i_{1},\ldots,i_{k})}$ is $(n-k +1) {\dim X}$.

 For a shadow $S \in \H$ in $X^l$ and a diagonal $D$ in $X^{n-l}$, 
the \emph{dimension} of $S \times D$ is the sum of $\dim S$ (as a holomorphic shadow)
and $\dim D$ (as above);
the \emph{dimension} of the image of $S \times D$ under permutations of the coordinates is 
the dimension of $S \times D$. The dimension of a finite union of such sets is the maximum of 
the dimensions of the sets in the union. 

It is easy to check that the dimension axioms still hold in ${\s}_{(X,J,\H)}$.  This completes the proof of Theorem \ref{genz}.

\begin{Remark}
By the arguments in the proof of Theorem \ref{genz}, we get that  ${\s}_{(X,J,\H)}$ with dimension as in Definition \ref{dim}  is a Zariski-type structure, with the (EU) property, whenever $\H$ consists of the images of a collection of J-holomorphic maps $\F$ such that $\F$ satisfies the pulling back diagonals property, $\F$ is closed under compositions and inverses, and $\F$ contains all the coordinate projections.  
\end{Remark}

\subsection*{Almost complex manifolds that are 
ample as Z-structures are of real dimension two}

We say that a Zariski-type structure on $X$ is \emph{very ample} if
there exists a family $Y \subset X^2 \times X^n$ of  irreducible one-dimensional Z-closed subsets of
 $X^2$, parametrized by a Z-closed irreducible set in $X^n$, 
such that 
\begin{itemize}
\item through any  two points in $X^2$, there is a curve in the family passing through both, and 
\item for any  two points in $X^2$, there is a curve in the family passing through exactly one of the points.
\end{itemize} 
 If only the first condition is satisfied, the structure is called \emph{ample}. 

\begin{Claim} \label{genamp}

Let $X$ be a  manifold of real dimension $2n$, and $J \in {\J(X)}_{\gen}$. If $(X,{\s}_{(X,J)})$ is ample, then $\dim_{\R} X \leq 2$, in particular $J$ is integrable.
\end{Claim}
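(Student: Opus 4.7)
The plan is to derive a contradiction from $\dim_{\R} X \geq 4$, using the fact that $J \in \J_{\gen}$ constrains holomorphic shadows in $(X,J)$ to be points or one-dimensional $J$-holomorphic curves; consequently, a shadow in $X^l$ has dimension at most $l$, because each of its coordinate projections is a shadow in $X$ and hence has dimension at most one.

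First, I unpack ampleness: it provides a Z-closed $Y \subset X^2 \times X^k$ whose fibers $Y_s$ over an irreducible Z-closed parameter set $P \subset X^k$ are irreducible one-dimensional Z-closed subsets of $X^2$, such that any two points of $X^2$ lie on a common fiber. An immediate consequence is $\pi_{X^2}(Y) = X^2$: for any $p \in X^2$, pair $p$ with a distinct $q$ and take the fiber through both.

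Second, I observe that $X^2$ is irreducible as a Z-closed set in $\s_{(X,J)}$ when $n \geq 2$. Indeed, by Definition \ref{str} and the shadow dimension bound, every proper Z-closed subset of $X^2$ is a finite union of pieces of one of three types: a holomorphic shadow (dimension $\leq 2$), a diagonal (dimension $\leq n$), or a shadow--diagonal product $S \times D$ (dimension $\leq n+1$). Each of these is $< 2n = \dim X^2$ for $n \geq 2$, so $X^2$ is irreducible. Decomposing $Y = \bigcup_i Y_i$ into irreducible components and applying irreducibility of $X^2$ to the union of the Z-closed sets $\pi_{X^2}(Y_i)$ then produces an irreducible component $Y_i$ with $\pi_{X^2}(Y_i) = X^2$.

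Third, I perform a case analysis on the building-block type of $Y_i \subset X^{2+k}$, which is either an irreducible holomorphic shadow or a permuted product $\sigma(S \times D_1 \times \cdots \times D_r)$ of an irreducible shadow with diagonals. If both of the first two coordinates of $X^{2+k}$ come from the shadow factor (in particular if $Y_i$ is itself a shadow), then $\pi_{X^2}(Y_i)$ is a holomorphic shadow in $X^2$, of dimension at most $2$, and cannot equal $X^2$ of dimension $2n \geq 4$. If exactly one first coordinate is contributed by the shadow factor and the other by a diagonal, then $\pi_{X^2}(Y_i)$ is of the form $(\text{shadow in }X) \times X$, of dimension at most $n+1$, still not $X^2$. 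If both first coordinates come from diagonal factors, then $\pi_{X^2}(Y_i)$ may equal $X^2$, but the fibers of $Y_i$ over $X^k$ are cut out by equality constraints pinning coordinates to fixed elements of $X$, so have dimensions only in $\{0, n, 2n\}$ and are not one-dimensional for $n \geq 2$, contradicting the ample hypothesis on the fibers $Y_s$. In every case the assumption $n \geq 2$ leads to a contradiction.

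Therefore $n \leq 1$. When $n = 1$, $X$ is a real surface and every almost complex structure on it is integrable, by the classical result cited in Section \ref{defalmost}. The main obstacle is the case analysis of the third step; the conceptual core is the dimension bound on holomorphic shadows coming from $\J_{\gen}$, which simultaneously restricts shadow projections to dimension $\leq 2$ and forces diagonal constraints to produce only fiber dimensions in $\{0, n, 2n\}$.
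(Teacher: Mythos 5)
Your proposal takes essentially the same route as the paper's proof: use $J\in\J_{\gen}$ to force every coordinate projection of a holomorphic shadow to be a point or a $J$-holomorphic curve, and then check the building blocks of Definition \ref{str} one by one to show that no Z-closed $Y\subset X^2\times X^k$ can serve as an ample family once $\dim_{\R}X>2$. The paper's proof is only a few lines and flatly asserts that none of the building blocks projects onto $X^2$; as your case (c) correctly observes, that assertion fails for pure products of diagonals (the permuted product $\{(x,y,x,y)\}\subset X^2\times X^2$ does project onto $X^2$), so your preliminary observation that $X^2$ is irreducible of dimension $2n$ and your three-way case split are genuinely needed refinements rather than detours.

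The one loose end is the conclusion of case (c). You derive that the fibers of the \emph{component} $Y_i$ have dimension in $\{0,n,2n\}$ and declare this to contradict ``the ample hypothesis on the fibers $Y_s$''; but ampleness constrains the fibers of $Y$, not of $Y_i$, and a component all of whose fibers are single points (again $\{(x,y,x,y)\}$) coexists happily with $Y$ having one-dimensional fibers. To close the gap: since $(Y_i)_s\subseteq Y_s$ with $Y_s$ irreducible of dimension $1<n$, axiom (DI) forces every nonempty fiber of a case-(c) component to be $0$-dimensional; hence the irreducible curve $Y_s=\bigcup_j (Y_j)_s$ must equal $(Y_j)_s$ for some component of type (a) or (b), and therefore $Y_s\subseteq\pi_{X^2}(Y_j)$, one of finitely many proper Z-closed subsets of $X^2$, each of dimension $<2n$. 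Taking the union over $s$ puts $X^2=\bigcup_s Y_s$ inside a finite union of proper closed sets, contradicting the irreducibility of $X^2$ established in your second step. With that patch the argument is complete; the $n=1$ endgame (integrability in real dimension two) is fine.
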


\begin{proof}
Assume $\dim_{\R}X >2$.
Since $J \in {\J(X)}_{\gen}$, the projection of a
holomorphic shadow $Y \subset X^k$ on every coordinate is either a point or a
$J$-holomorphic curve, hence, since $\dim_{\R}X >2$, a holomorphic shadow $Y \subset X^2 \times X^n$ does not project onto $X^2$. Similarly, if $Y$ is a diagonal, a Cartesian product of holomorphic shadows and diagonals, or the image of such a set under coordinate-permutation, it does not project onto $X^2$. Thus there is no $Y \subset X^2 \times X^n$ that can serve as a family demonstrating ampleness.
\end{proof}

To generalize this claim we use results on Zariski geometries.

\emph{Zariski geometry} is defined by \cite{zilber}, \cite{ezilber}.
A set $X$ with a collection of compatible Noetherian topologies, one on each $X^n$, $n \in \N$, and the Noetherian dimension as dimension, such that $X$ is irreducible and of Noetherian dimension one, and 
the Pre-smoothness (PS) property is satisfied, is a (one-dimensional) Zariski geometry. 
(A topological space has 
           \emph{Noetherian dimension} 
            $n$ if $n$ is the maximal length of a chain of closed irreducible sets
             $C_n \supset C_{n-1} \supset \ldots \supset C_0$.)

Any smooth algebraic curve $X=C$ can be viewed as a Zariski geometry; 
the Z-closed subsets are taken to 
be the Zariski closed subsets of $C^n$ for each $n$; this Zariski geometry is very ample.

In \cite[Theorem 1]{ezilber}, Hrushovski and Zilber show that if $X$ is a very ample Zariski geometry, 
then there exists a smooth curve $C$ over an algebraically 
closed field $F$, such that $X$ and $C$ are isomorphic as Zariski geometries. 
 In \cite[Theorem 2]{ezilber}, they show that if $X$ is an ample Zariski geometry, then there exists an algebraically 
closed field $F$, and a surjective map $f \colon X \to \oCP(F)$, such that off a finite set $f$ induces a closed continuous maps on each Cartesian power.

\begin{noTitle}
Let $X$ be a  $2n$-manifold, and $J$ an almost complex structure on $X$. Assume that the pulling back diagonals property is satisfied for $J$-holomorphic maps from compact complex manifolds to Cartesian products of $X$.  Thus the Z-axioms (L1)--(L5), (P), and (DCC), and the (EU) property, are satisfied in the holomorphic shadows structures. Assume also that $X$ is of Noetherian dimension one in this structure, i.e., 
any proper Z-closed subset of $X$ is a finite set of points.

If $X$ itself is a holomorphic shadow, then 
outside the penumbra, a proper Z-closed subset hence a finite set, it is a complex manifold (the umbra). 

If $X$ is not a holomorphic shadow, then it does not contain holomorphic shadows except for finite sets of points. In this case, 
for any $n \in \N$, the space $X^n$ contains no holomorphic shadows (that are not finite) either, since 
for an infinite holomorphic shadow in $X^n$ its image under one of the coordinate projections
$X^n \to X$ is an infinite holomorphic shadow in $X$;
  thus the holomorphic shadows structure is trivial, i.e., consists only of diagonals and points.  

In both cases we have the Pre-smoothness (PS) property, with Noetherian dimension. 
\end{noTitle}

\begin{Corollary} \label{zarone}
Let $X$ be a    manifold, and $J$ an almost complex structure on $X$. Assume that the pulling back diagonals property is satisfied for $J$, and that $X$ is of irreducible and of Noetherian dimension one in the holomorphic shadows structure.
Then this structure, with Noetherian dimension, is a Zariski geometry that also satisfies the (EU) property.
\end{Corollary}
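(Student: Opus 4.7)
The plan is to assemble the axioms of a one-dimensional Zariski geometry from the structural results already proved in the paper, with the only new work being the pre-smoothness property (PS).

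First, I would invoke the pulling back diagonals hypothesis. The proof of Theorem~\ref{genz} uses exactly this property (through Claims~\ref{inc}, \ref{inv}, \ref{DCC}, and \ref{eu}) to establish the topological axioms (L1)--(L5), (P), (DCC), and the essential uncountability property (EU) for the holomorphic shadows structure ${\s}_{(X,J)}$. Thus each $X^n$ carries a Noetherian topology on which the coordinate projections behave correctly, and on which Noetherian dimension is well-defined. Irreducibility of $X$ and its Noetherian dimension one are granted by hypothesis, so it remains to verify (PS).

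For (PS) I would split into the dichotomy sketched in the paragraph preceding the statement. If $X$ is not a holomorphic shadow, then any infinite holomorphic shadow in some $X^n$ would, by coordinate projection, produce an infinite holomorphic shadow in $X$, contradicting the Noetherian dimension one hypothesis. Hence the only Z-closed sets are finite unions of points, diagonals, and coordinate-permuted products thereof, and (PS) follows by an elementary dimension count on diagonals, since the dimension of such sets is additive in the number of free coordinates. If instead $X$ is a holomorphic shadow, Lemma~\ref{greq} provides an umbra $U_X$, dense in $X$ with finite complement, and $U_X$ is integrable of complex dimension one. For each $n$, Claim~\ref{alun} together with Lemma~\ref{greq} identifies every irreducible holomorphic shadow in $X^n$ with a complex analytic subvariety of a compact complex manifold (its shadow caster), up to a lower-dimensional penumbra; pre-smoothness is classical for complex analytic subvarieties of such manifolds.

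The main obstacle I anticipate is the transport of (PS) from the caster side to the shadow side in the integrable case: when intersecting two irreducible Z-closed subsets $S_1, S_2$ of $X^n$, one has to argue that each irreducible component of $S_1 \cap S_2$ corresponds dimensionally to an irreducible component of the intersection of the corresponding shadow casters, so that the inequality $\dim(S_1 \cap S_2) \geq \dim S_1 + \dim S_2 - \dim X^n$ inherited from complex geometry survives the passage back. This is handled by working on the umbra, where the map from the caster is a holomorphic isomorphism (Lemma~\ref{invtheo}), and by applying the additive dimension formula of Claim~\ref{ADF} to compare fiber dimensions; the penumbra, being contained in a proper holomorphic shadow of strictly lower dimension, cannot produce a component of $S_1 \cap S_2$ that violates the bound. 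Combining this with (EU) already established gives a one-dimensional Zariski geometry satisfying (EU), as required.
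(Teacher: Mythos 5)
Your proposal follows essentially the same route as the paper: the paper's justification is exactly the dichotomy in the paragraph preceding the corollary (pulling back diagonals gives (L1)--(L5), (P), (DCC), (EU); then either $X$ is a holomorphic shadow, so off a finite penumbra everything reduces to complex analytic subvarieties of compact complex manifolds where (PS) is classical, or $X$ contains no infinite shadows and the structure is trivial, where (PS) is a diagonal count). You in fact supply more detail than the paper does on transporting (PS) past the penumbra, a point the paper simply asserts.
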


\begin{Corollary} \label{compl}
Let $X$ be a    manifold, and $J$ an almost complex structure on $X$. Assume that the pulling back diagonals property is satisfied for $J$, and that $X$ is of irreducible and of Noetherian dimension one in the holomorphic shadows structure. If this structure is ample, then
$(X,J)$ is of real dimension two, in particular $J$ is integrable.
\end{Corollary}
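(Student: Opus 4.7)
The plan is to use the dichotomy established in the discussion preceding Corollary \ref{zarone}: either $X$ is itself a holomorphic shadow, or the holomorphic shadows structure on each $X^n$ is trivial, consisting only of diagonals and finite sets of points. I will show the second alternative rules out ampleness, so $X$ must be a holomorphic shadow, and then $\dim_\R X = 2$ follows from the umbra description.

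Suppose first that $X$ is not a holomorphic shadow. Unfolding Definition \ref{str} with $\H$ restricted to finite sets, the only one-dimensional irreducible Z-closed subsets of $X^2$ are the horizontal lines $\{a\}\times X$, the vertical lines $X\times\{a\}$, and the small diagonal $\Delta^{2}_{1,2}$. For a family $Y\subset X^2\times X^n$ to witness ampleness, for every pair of points in $X^2$ some fiber $Y_a$ must contain both. But if we choose $(p_1,p_2)$ and $(q_1,q_2)$ with $p_1\neq q_1$, $p_2\neq q_2$, $p_1\neq p_2$, and $q_1\neq q_2$, none of the three forms above contains both points. Hence the structure cannot be ample, a contradiction.

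Therefore $X$ is itself a holomorphic shadow. By Lemma \ref{greq}, $X$ contains a dense umbra $U_X$ that is a complex manifold in the almost complex sense, and $X\smallsetminus U_X$ is contained in a penumbra $P_X\subsetneq X$ which is itself a holomorphic shadow. The Noetherian-dimension-one assumption forces every proper Z-closed subset of $X$ to be zero-dimensional, i.e., finite. So $P_X$ is finite, $U_X$ is an open cofinite subset of $X$, and $U_X$ is a Riemann surface (complex dimension one). This forces $\dim_\R X=2$, and then $J$ is integrable because every almost complex structure on a real surface is (see \cite[Theorem 4.16]{MS:intro}).

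The delicate step is the Case~2 enumeration: one must carefully inspect Definition \ref{str} applied to the trivial $\H$ to confirm that products and permutations of diagonals and finite sets produce nothing one-dimensional and irreducible beyond the three types listed above.
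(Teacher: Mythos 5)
Your Case 1 (if $X$ is not a holomorphic shadow the structure is trivial and cannot be ample) is sound and matches the discussion preceding Corollary \ref{zarone}. The gap is in Case 2: there you never use the ampleness hypothesis, and the conclusion you draw without it is false. From ``$X$ is a holomorphic shadow of Noetherian dimension one'' you infer that the umbra $U_X$ is a Riemann surface, but Noetherian dimension one only says that every proper Z-closed subset of $X$ is a finite set of points; it says nothing about the complex dimension of $U_X$. What Lemma \ref{greq} gives is that $U_X$ is a cofinite, hence open, subset of $X$ carrying an integrable complex structure, so its complex dimension is $\tfrac12\dim_\R X$ --- whatever that is. A concrete counterexample to your Case 2 reasoning: let $X$ be a generic complex $2$-torus with no positive-dimensional proper analytic subvarieties. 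It is a holomorphic shadow (of itself), the pulling back diagonals property holds since $J$ is integrable, every proper Z-closed subset is finite (so $X$ is irreducible of Noetherian dimension one), yet $\dim_\R X=4$. The corollary is not contradicted, because this structure is not ample; but your argument does not invoke ampleness at this stage, so it would ``prove'' $\dim_\R X=2$ for the torus.

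The paper's proof is entirely different, and ampleness is where all the work happens. It first notes (Corollary \ref{zarone}) that the structure is an ample one-dimensional Zariski geometry satisfying (EU), and then applies Hrushovski--Zilber's Theorem 2 to obtain an algebraically closed field $F$ interpretable in the structure together with a surjection $\pi\colon X\to P^1(F)$ that, off a finite set, is closed, continuous, and finite-to-one. Using (EU) it shows $\pi$ is generically $n$-to-one for a fixed $n$, transports the manifold topology of $X$ down to $F$ so that the field operations become continuous, concludes that $F$ is a locally compact algebraically closed field, hence $F=\C$, and finally that $X$, being finite-to-one over an open subset of $P^1(\C)$, has real dimension $2$. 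Some genuine use of ampleness of this kind is needed to close your Case 2; the umbra argument alone cannot do it.
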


\begin{proof}

By Theorem 2 in \cite{ezilber}, there exists an algebraically closed field $\F$ and a map 
${\pi} \colon X \rightarrow {P}^{1}(\F)$. The map $\pi$ maps constructible sets to
algebraically constructible sets; 
off a certain finite set, $\pi$ is surjective and induces a closed 
continuous map on each Cartesian power.
$\F$ is interpretable on $X$, i.e., there is an equivalence relation $\sim _{\pi}$ 
on $X$, such that for some finite subset $A'$, the quotient by $\sim _{\pi}$   
of $\bar {X} \times \bar {X}$, where
 $\bar {X}=X - A'$,  is a closed subset of $\bar {X} \times \bar {X}$. 
There are definable subsets $A, M \subset \bar {X} \times \bar {X} \times \bar {X}$ such
that their quotient 
by $\sim _{\pi}$, restricted to products of coordinate neighbourhood,
give the graphs of the field operations (addition and multiplication) in $\F$.


By removing finite sets from $\bar{X}$ and $\F \subseteq P^{1}(\F)$, we have 
$\pi \colon \tilde{X}\rightarrow \tilde{\F}$ that is surjective, continuous, 
maps constructible sets to algebraically constructible sets, and finite to one.

``Finite to one'' follows from the fact that 
the Zariski geometries $X$ and $P^1(\F)$ are both of Noetherian dimension 1,  
and in a generic point 
$y$ in $P^1(\F)$, 
$\dim_{X}(X)= \dim_{P^{1}(\F)}({\pi}(X))+\dim_X({\pi}^{-1}y)$.

For $k \in \N$, let $E_k=\{f \in \tilde{\F} |{\pi}^{-1}(f) \cap \tilde{X}| = k \}$.
$E_k$ is a definable set in $P^{1}(\F)$ as interpreted in $X$.
Since $P^{1}(\F)$ is a strongly minimal set, either $E_k$ is finite 
or $\F-E_k$ is finite.
 If for every $k \in \N$,
$E_k$ is finite we get that $\F$ is countable, (recall that $\pi$ is finite to one),
 contradicting  axiom (EU).
($\F$ is infinite since it is algebraically closed.)
Thus there exists $n \in \N$ such that for $f \in \tilde{\F}- \text{ a finite set }$
(to be denoted also $\tilde{\F}$),  
$|({\pi}^{-1}(f) \cap \tilde{\F}) |= n$.

For $p \in \tilde{\F}$, take a coordinate neighbourhood $U \subset \tilde{X}$ 
around of a point in the fiber 
$\pi^{-1}p$, and define its image in $\tilde{F}$ by $\pi$ to be a coordinate neighbourhood. 
This gives $\tilde{\F}$ a manifold structure.   
The map $\pi$ is continuous 
with respect to this topology on $\tilde{\F}$ and the given topology on $\tilde{X}$ (induced from 
the topology on $X$). 

Moreover, covering $\F$ by translates of 
$\tilde{\F}$, (by the translations $f \to f+b$, $f \to f \ast b$, 
induced from the addition and multiplication in $\F$),
we obtain $\F$ as a manifold. 
The field operations $+ \colon \F \times \F \to \F$ and $\ast \colon \F \times \F \to \F$ 
are continuous with respect to this topology, as the graph of the operations restricted to 
products of coordinate neighbourhoods are given by the quotient 
by $\sim _{\pi}$ of  constructible subsets 
$A, M \subset \tilde {X} \times \tilde{X} \times \tilde {X}$ , as above. 
(By elimination of quantifiers, a set is constructible iff it is definable.)
So the manifold structure on $\F$ is consistent with the field operations.

In particular $\F$ is locally compact. Since it is also algebraically closed, $\F=\C$.
Since $\pi \colon \tilde{X} \rightarrow \tilde{F}$ is finite to one 
and ${\pi}^{-1}{(\F -\text{ finite set})}$ is open in $X$, 
the almost complex manifold $X$ must be of real dimension $2$.

 \end{proof}

\section{Holomorphic shadows in symplectic geometry} \label{sympl}

The theory of $J$-holomorphic curves has been an active study area and a powerful tool in symplectic geometry, 
since the pioneering paper of Gromov \cite{gromovcurves}.

A \emph{symplectic} structure on a smooth $2n$-dimensional manifold $X$ is a closed $2$-form $\omega$ which is 
non-degenerate (i.e., ${\omega}^n$ does not vanish anywhere). Two symplectic manifolds $(X_1,\omega_1)$ and $(X_2,\omega_2)$ 
are called \emph{symplectomorphic} if there is a diffeomorphism $\phi \colon X_1 \to X_2$ such that $\phi^{*}{\omega_2}=\omega_1$.  
A symplectic form $\omega$ is said to \emph{tame} an almost complex structure $J$ 
if $\omega$ is $J$-positive, i.e., $$\omega(v,Jv) > 0$$ for all non-zero $v \in TX$.
This implies that for every embedded submanifold $C \subset M$,
if $J(TC) = TC$ then $\omega|_{TC}$ is non-degenerate.
 Given $\omega$, we denote (in this section) by $$\J=\J(X,\omega)$$ the space of almost complex structures $J$ on $X$ that are tamed by $\omega$. 
The space $\J$ is nonempty and contractible \cite[Proposition~2.50(iii)]{MS:intro}; in particular, it is path connected.  As a result, the first Chern class of the complex vector bundle $(TX,J)$ 
is independent of the choice of $J \in \J$.

We say that $A \in H_2(M;\Z)$ is \emph{$J$-indecomposable} if it does not split as a sum 
$A_1 + \ldots + A_k$ of classes all of which can be represented by non-constant $J$-holomorphic curves. 
The class $A$ is called \emph{indecomposable} if it is 
$J$-indecomposable for all $\omega$-tame $J$. Notice that if $A$ cannot be written 
as a sum $A = A_1 + A_2$ where $A_i \in H_2(M;\Z)$
and $\int_{A_i}{\omega} > 0$,   then it is indecomposable.

\subsection*{Restating claims in the language of Shadows structures}

We now restate claims of Gromov \cite{gromovcurves} and McDuff 
\cite{McDuff-Structure} in the language of Zariski structures; 
we will sketch some of the ideas of the (geometric) proofs.

{\bf Notation:}
An \emph{isomorphism} of two Zariski structures $\z_{1}(X)$, $\z_{2}(Y)$ is a map $\z_{1}(X) \to \z_{2}(Y)$ which is an isomorphism 
of topologies between $X^n$ and $Y^n$ for all $n \in \N$, that commutes with coordinate projections, Cartesian products and dimension assigning.
An \emph{embedding} of one Zariski structure into the other is a one-to-one map that is an isomorphism with its image.

We will say that a Zariski structure $\z$ is embedded into a structure (not necessarily a Zariski structure) $\s$ with a (partial) 
dimension function if there is a one-to-one map from the closed sets in $\z$ to the closed sets in $\s$, that preserves the subset relation between closed sets as well as projections, such that the restriction of $\s$ to its image is 
a Zariski structure, with which the map is an isomorphism.

\begin{Example}
Consider $(S^2 \times S^2, J_0 \oplus J_0)$, where $J_0$ is the standard complex structure on the sphere $S^2=\oCP$.
Denote by $\omega_0$ an area form on the sphere $S^2$, whose orientation agrees with the orientation induced 
by $J_0$. 
The form $\omega_0 \oplus \omega_0$, defined as the sum of the pullbacks of $\omega_0$ to $S^2 \times S^2$ via the coordinate projections, 
is a symplectic form on $S^2 \times S^2$ that tames $J_0 \oplus J_0$.

Denote by $\Striv (S^2 \times S^2)$ the structure generated by finite unions and Cartesian products from 
\begin{enumerate}
\item \label{1a} points $(s,r) \in S^2 \times S^2$,
\item \label{1b} the set $S^2 \times S^2$, 
\item \label{1c} the sets $\{s\} \times S^2$, $S^2 \times \{s\}$, for any $s \in S^2$,
\item \label{1d} the diagonals $${\Delta}^{n}_{i_1, \ldots, i_k}=\{(x_1,\ldots,x_n) \vert x_{i_1}=\ldots=x_{i_k}, x_i \in S^2 \times S^2\};$$
\end{enumerate}  
with the natural dimension assigned: 
\begin{itemize}
\item the dimension of a set in (\ref{1a}) is $0$, 
 \item the dimension of a set in (\ref{1b}) is $2$, 
 \item the dimension of a set in (\ref{1c}) is  $1$, 
\item $\dim \Delta^{n}_{i_1,\ldots,i_k}=2(n-k+1)$,
\item $\dim(S_1 \cup S_2) = \max \{\dim S_2 , \dim S_2\}$,
\item $\dim S \times T = \dim S + \dim T$.
\end{itemize}
Then $\Striv (S^2 \times S^2)$ is a Zariski structure. 

For any $s \in S^2$, the sphere $\{s\} \times S^2$ is embedded as a symplectic sphere in 
$(S^2 \times S^2, \omega_0 \oplus \omega_0)$, and as a $J_0 \oplus J_0$-sphere. This implies that
\begin{Claim} 
 $\Striv (S^2 \times S^2)$ is embedded as a Zariski structure in the holomorphic shadows structure $\s_{(S^2 \times S^2, J_0 \oplus J_0)}$. 
\end{Claim}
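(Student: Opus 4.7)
The plan is to exhibit a natural injection from $\Striv(S^2 \times S^2)$ into $\s_{(S^2 \times S^2, J_0 \oplus J_0)}$, and verify that it preserves all the Zariski-structure data: closed sets, finite unions, Cartesian products, coordinate projections, and dimension. The point is that every generator of $\Striv$ is literally the same subset of some Cartesian power of $S^2 \times S^2$ when viewed in the holomorphic shadows structure, so the identification is tautological on the level of subsets, and what remains is to verify each piece.

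First, I would check the four types of generators. A point $(s,r) \in S^2 \times S^2$ is the image of a constant map from a one-point compact complex manifold, hence a holomorphic shadow. The whole $S^2 \times S^2$ is the image of its identity map, which is $J$-holomorphic because $J_0 \oplus J_0$ is integrable and $S^2 \times S^2$ is itself a compact complex manifold. Each slice $\{s\} \times S^2$ (resp.\ $S^2 \times \{s\}$) is the image of the $J$-holomorphic embedding $\CP^1 \hookrightarrow S^2 \times S^2$ sending $t \mapsto (s,t)$ (resp.\ $t \mapsto (t,s)$), so it is a $J$-holomorphic curve and in particular a holomorphic shadow. Diagonals in Cartesian powers of $S^2 \times S^2$ are included directly by Definition \ref{str}.

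Next I would invoke Claim \ref{123} together with Definition \ref{str}: $\s_{(S^2 \times S^2, J_0 \oplus J_0)}$ is closed under finite unions and contains all products of holomorphic shadows with diagonals, and is closed under permutations of coordinates. Hence any set built from the $\Striv$ generators via finite unions and Cartesian products corresponds to a set in $\s_{(S^2 \times S^2, J_0 \oplus J_0)}$. The identification is one-to-one because it sends a subset of $(S^2 \times S^2)^n$ to itself, and by construction it commutes with Cartesian products, unions, and coordinate projections. The dimension assignments also agree: complex dimension $0$ for a point, $1$ for a slice, $2$ for $S^2 \times S^2$, and $2(n-k+1)$ for $\Delta^n_{i_1,\ldots,i_k}$, matching the prescription in the statement.

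Finally, the restriction of $\s_{(S^2 \times S^2, J_0 \oplus J_0)}$ to the image of this identification is exactly $\Striv(S^2 \times S^2)$: the image contains the generators, is closed under the relevant operations, and inherits the dimension function; conversely, nothing outside $\Striv$ gets pulled back. The main (modest) obstacle is simply the bookkeeping of matching each structural operation on the $\Striv$ side with its counterpart on the $\s$ side; this uses only integrability of $J_0 \oplus J_0$ and the $J$-holomorphicity of constant maps, the identity, and the obvious slice embeddings, so no substantial geometric input is needed beyond what is already established.
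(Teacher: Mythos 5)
Your proposal is correct and matches the paper's approach: the paper justifies this claim with essentially one sentence (the slices $\{s\}\times S^2$ and $S^2\times\{s\}$ are $J_0\oplus J_0$-holomorphic spheres, hence holomorphic shadows), and your write-up simply fills in the routine verifications of the remaining generators, the closure under unions and products via Claim \ref{123} and Definition \ref{str}, and the matching of dimensions. No gap.
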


We show a similar claim for $J \in \J(S^2 \times S^2, \omega_0 \oplus \omega_0)$ that does
not necessarily split as a product of almost complex structures on $S^2$. 
This follows from known results of Gromov \cite[2.4 A1]{gromovcurves} and \cite[Lemma 4.1, Lemma 4.2]{McDuff-Structure}. I prove Claim \ref{wdf} here to give the reader the flavor of the arguments.

\begin{Claim} \label{wdf}
For every almost complex structure $J$ that is tamed by $\omega_0 \oplus \omega_0$, for each point in  $S^2 \times S^2$ there exists and unique an embedded $J$-holomorphic sphere in 
$$A=[S^2 \times \pt]$$
$$ (B=[\pt \times S^2]).$$
In addition, any $J$-holomorphic sphere in $A$ and any $J$-holomorphic sphere in $B$ intersect once and transversally.

\end{Claim}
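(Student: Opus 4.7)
The plan is to transport the statement from the obvious case $J = J_0 \oplus J_0$ to an arbitrary $\omega_0 \oplus \omega_0$-tame $J$ via Gromov compactness and a cobordism through $\J(S^2 \times S^2, \omega_0 \oplus \omega_0)$, and to extract uniqueness, transverse intersection, and embeddedness from positivity of intersections and the adjunction formula.

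First I would establish \emph{indecomposability} of $A$ and $B$. Normalize $\omega_0$ so that each factor has area $1$; then in $H_2(S^2 \times S^2;\Z) \cong \Z\langle A\rangle \oplus \Z\langle B\rangle$ any nontrivial decomposition $A = A_1 + A_2$ with both $\int_{A_i}(\omega_0 \oplus \omega_0) > 0$ forces integers $a,b$ with $0 < a + b < 1$, which is impossible. Hence $A$ (and symmetrically $B$) is indecomposable, and Gromov compactness forbids bubbling in any sequence of $J$-holomorphic $A$-spheres: a subsequence converges modulo reparametrization to a simple $J$-holomorphic $A$-sphere.

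Next I would analyze the moduli space $\M(A,\oCP,J)/\mathrm{PSL}_2(\C)$ of unparametrized simple $J$-holomorphic $A$-spheres. With $c_1(TM)\cdot A = 2$ and genus $0$, its expected dimension is $2c_1(A) + 2n(1-g) - 6 = 2$. The crucial input is \emph{automatic regularity} for embedded spheres with $c_1 = 2$ in a $4$-manifold (see \cite[Chapter 3]{MS:JCurves}): the linearized Cauchy--Riemann operator $D_u$ is surjective at every such representative, regardless of the tame $J$. Combined with no-bubbling this makes the quotient moduli space a compact smooth $2$-manifold for \emph{every} $J \in \J(S^2 \times S^2, \omega_0 \oplus \omega_0)$. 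For $J_0 \oplus J_0$ it is exactly $S^2$, and the evaluation map $\mathrm{ev} \colon \M(A,\oCP,J)/\mathrm{PSL}_2(\C) \to S^2 \times S^2$ has degree $1$. Path-connectedness of $\J(S^2 \times S^2, \omega_0 \oplus \omega_0)$ then provides a cobordism between the moduli spaces for different tame $J$'s, preserving the degree of $\mathrm{ev}$; hence through every point of $S^2 \times S^2$ passes at least one $J$-holomorphic $A$-sphere, and similarly a $J$-holomorphic $B$-sphere.

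Uniqueness and transverse intersection then follow from \emph{positivity of intersections} \cite[Appendix E]{MS:JCurves}: two distinct simple $J$-holomorphic curves in a $4$-manifold meet with each common point contributing a strictly positive integer to the algebraic intersection number. Since $A \cdot A = 0$, two distinct simple $A$-spheres are disjoint, forcing uniqueness through a point; since $A \cdot B = 1$, an $A$-sphere and a $B$-sphere meet in a single point which must be transverse, as any non-transverse contribution would be $\geq 2$. Embeddedness is immediate from the adjunction formula applied to a simple $J$-holomorphic sphere $C$ in class $A$: the singularity count satisfies $2\delta(C) = A \cdot A - c_1(A) + 2 = 0$. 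The main obstacle in executing this plan is ensuring the moduli space is regular for \emph{every} $\omega_0 \oplus \omega_0$-tame $J$ rather than only for a generic one, and automatic regularity in classes with $c_1 = 2$ on a $4$-manifold is precisely what supplies this, making the cobordism argument go through without the usual transversality subtleties present in Section \ref{defalmost}.
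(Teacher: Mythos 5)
Your proposal is essentially correct and rests on the same pillars as the paper's proof: indecomposability of $A$ from the area normalization, Gromov compactness to rule out bubbling, the adjunction inequality to force embeddedness of every simple $J$-holomorphic $A$-sphere, automatic regularity in dimension four (this is exactly the Hofer--Lizan--Sikorav criterion the paper cites), and positivity of intersections for uniqueness and for the transversality of the $A$--$B$ intersection. Where you diverge is in how the existence statement is transported from $J_0 \oplus J_0$ to an arbitrary tame $J$. The paper argues by connectedness of $\J(S^2\times S^2,\omega_0\oplus\omega_0)$: the set of $J$ admitting an embedded $A$-sphere through a given point is nonempty, open (implicit function theorem plus regularity), and closed (Gromov compactness plus indecomposability), hence is all of $\J$. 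You instead run a parametrized cobordism of moduli spaces along a path of tame structures and appeal to invariance of the degree of the evaluation map. The two arguments are interchangeable here and use identical inputs; the open--closed formulation is slightly leaner because it never needs the moduli space to carry a fundamental class, only pointwise persistence and compactness.

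Two small repairs you should make. First, the evaluation map you assign degree $1$ is stated on $\M(A,\oCP,J)/\mathrm{PSL}_2(\C)$, which is $2$-dimensional, while the target $S^2\times S^2$ is $4$-dimensional, so that map has no degree; the degree argument must be run on $\M(A,\oCP,J)\times_{\mathrm{PSL}_2(\C)}\oCP \to S^2\times S^2$, whose domain has dimension $8-6+2=4$. Second, the logical order matters: automatic regularity requires the representative to be immersed, so you must first deduce embeddedness of every simple $A$-sphere from the adjunction identity $A\cdot A - c_1(A) + 2 = 0$ (computed once from the standard $J_0\oplus J_0$-sphere) and only then invoke regularity; in your write-up embeddedness appears after the regularity step, which reads as circular even though the underlying argument is not.
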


\begin{proof}

Through each point in $S^2 \times S^2$ there is an embedded $J_0$-holomorphic sphere 
$f_0 \colon S^2 \to S^2 \times S^2$ in $A=[S^2 \times \pt]$
$ (B=[\pt \times S^2]).$ In particular, $A$ is the homology class of an embedded $\omega_0 \oplus \omega_0$-sphere of minimal symplectic area, hence 
$A$ is indecomposable,
and every $J$-holomorphic sphere $f \colon S^2 \to S^2 \times S^2$ in $A$ is simple.

By the adjunction inequality (in a four-dimensional manifold), 
if $A$ is represented by a simple 
J-holomorphic curve $f$, then
$$ A \cdot A - c_1(A) + 2 \geq 0,$$
with equality if and only if $f$ is an embedding;
see~\cite[Cor.~E.1.7]{MS:JCurves}.
Applying this to $(f_0,J_0)$, we get that the homology class $A$
satisfies $A \cdot A - c_1(A) + 2 = 0$.
Applying the adjunction inequality to any $(f,J) \in \M(A,S^2,\J)$, 
we get that $f$ is an embedding, 
hence, by the Hofer-Lizan-Sikorav regularity criterion, $(f,J)$ is regular for $p_{A}$. 
(The Hofer-Lizan-Sikorav regularity criterion asserts that if $f$ is an 
immersed $J$-holomorphic curve in a four-dimensional manifold 
and $c_{1}([f])\geq 1$, then $(f,J)$ is a regular point 
for the projection $p_{A}$ \cite{HLS}.)
By the implicit function theorem, every $p_{A}$-regular sphere $(f,J)$ persists when $J$ is perturbed, see e.g., \cite[Remark 3.2.8]{MS:JCurves}.
On the other hand, since
 $A$ is indecomposable, 
Gromov's compactness theorem \cite[1.5.B.]{gromovcurves} implies that  
if $J_n$ converges in $\J$, then every sequence $(f_n,J_n)$ in
$\M(A,S^2,\J)$
has a $(C^{\infty}-)$convergent 
subsequence.
  We conclude that for each point $\pt \in S^2 \times S^2$, the set of $J \in \J(=\J(S^2 \times S^2,\omega_0))$ for which there is an embedded 
$J$-holomorphic sphere through $\pt$ in $A=[S^2 \times \pt]$ ($B=[\pt \times S^2]$) is nonempty open and closed in the connected space $\J$, hence it equals $\J(=\J(S^2 \times S^2,\omega_0 \oplus \omega_0))$.

Let $J \in \J$. If there are two different simple $J$-holomorphic spheres $f_1,f_2$ through a point in $S^2 \times S^2$, then, by positivity of intersections in almost complex four-manifolds \cite[ Theorem E.1.5]{MS:JCurves},  the intersection number $[{f_1}(S^2)] \cdot [{f_2}(S^2)]$ is positive. Thus, since, $A \cdot A=0=B \cdot B$, there cannot be two different $J$-holomorphic spheres in $A$ ($B$) through a point in $S^2 \times S^2$. 
Also, again by positivity of intersections, for simple $J$-holomorphic spheres $f_1,f_2$, the intersection number $[{f_1}(S^2)] \cdot [{f_2}(S^2)]$ equals $1$ if and only if the spheres meet exactly once and transversally.  Hence a $J$-holomorphic sphere in $A$ and a $J$-holomorphic sphere in $B$ intersect once and transversally.

\end{proof}

In the language of shadows structures, this claim have the following translation.

\begin{Claim} \label{clemb}
$\Striv (S^2 \times S^2)$ can be embedded into the shadows structure $\s_{(S^2 \times S^2,J)}$ for every
$J \in \J(S^2 \times S^2,\omega_0 \oplus \omega_0)$. 

\end{Claim}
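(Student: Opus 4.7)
The plan is to exploit Claim \ref{wdf} to transport the product structure of $S^2 \times S^2$ to the $J$-holomorphic ruling in classes $A$ and $B$. Fix a $J$-holomorphic sphere $\alpha_0$ in $A$ and a $J$-holomorphic sphere $\beta_0$ in $B$, which by Claim \ref{wdf} meet transversally at a single point $p_0$. Use the smooth identifications $\alpha_0 \cong S^2$ and $\beta_0 \cong S^2$ to label their points. For each $a \in \beta_0$, let $\alpha_a$ denote the unique embedded $J$-holomorphic $A$-sphere through $a$; similarly for each $b \in \alpha_0$ let $\beta_b$ be the unique embedded $J$-holomorphic $B$-sphere through $b$. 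Define
\[ \Phi \colon S^2 \times S^2 \to S^2 \times S^2, \qquad \Phi(a,b) = \alpha_a \cap \beta_b, \]
which is well defined and single-valued because an $A$-sphere and a $B$-sphere meet exactly once and transversally.

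Next I would check that $\Phi$ is a bijection. Given $p \in S^2 \times S^2$, Claim \ref{wdf} yields a unique $A$-sphere through $p$, which meets $\beta_0$ at a unique point $a(p) \in \beta_0$, and a unique $B$-sphere through $p$ meeting $\alpha_0$ at $b(p) \in \alpha_0$; the assignment $p \mapsto (a(p), b(p))$ inverts $\Phi$. By construction $\Phi$ sends the horizontal slice $\{a\} \times S^2$ onto $\alpha_a$ and the vertical slice $S^2 \times \{b\}$ onto $\beta_b$, both of which are holomorphic shadows in $\s_{(S^2 \times S^2, J)}$ of dimension one.

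Now define $\phi$ on closed sets of $\Striv(S^2 \times S^2)$ by $\phi(C) = \Phi^n(C)$ for $C \subseteq (S^2 \times S^2)^n$. On the generators of $\Striv$ listed in the definition preceding Claim \ref{wdf}, the map $\phi$ sends points to points, horizontal and vertical spheres to $J$-holomorphic $A$- and $B$-spheres, $S^2 \times S^2$ to itself, and each diagonal ${\Delta}^{n}_{i_{1},\ldots,i_{k}}$ to itself, since bijections on the factor preserve the defining equations $x_{i_1}=\cdots=x_{i_k}$. Because $\Phi^n$ acts factor-by-factor, finite unions and Cartesian products are respected, so $\phi(C) \in \s_{(S^2 \times S^2, J)}$ for every $C$. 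The same factor-wise action shows that $\phi$ commutes with every coordinate projection $\pr_{i_1,\ldots,i_m}$, and since $\Phi^n$ is a bijection, $\phi$ is injective and preserves the subset relation. Comparing the two dimension assignments (zero on points, one on each sphere, two on $S^2 \times S^2$, and $2(n-k+1)$ on ${\Delta}^{n}_{i_{1},\ldots,i_{k}}$, extended additively on products and by maximum on unions) shows that $\phi$ is dimension preserving; hence the image of $\Striv(S^2 \times S^2)$ under $\phi$ is a Zariski-type sub-structure of $\s_{(S^2 \times S^2, J)}$ isomorphic to $\Striv(S^2 \times S^2)$.

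The only nontrivial input is Claim \ref{wdf}, which carries all the symplectic and Gromov-theoretic content (existence and uniqueness of the $A$- and $B$-rulings, together with transverse single intersection). Once that is in hand, both the construction of $\Phi$ and the verification of the embedding axioms reduce to routine bookkeeping on the list of generators of $\Striv$.
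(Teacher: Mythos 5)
Your proposal is correct and follows essentially the same route as the paper: fix reference spheres in the classes $A$ and $B$, use Claim \ref{wdf} to parametrize the two $J$-holomorphic rulings by points on these reference spheres, define the point map as the unique transverse intersection of an $A$-sphere with a $B$-sphere, and verify the embedding axioms generator by generator. The only cosmetic difference is that the paper identifies the reference spheres with $S^2$ via $J$-holomorphic diffeomorphisms rather than arbitrary smooth ones, which changes nothing in the argument.
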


\begin{proof}
Fix $J \in \J(S^2 \times S^2, \omega_0 \oplus \omega_0)$. Choose $s_0 \in S^2$.   By Claim \ref{wdf}, there is a unique $J$-holomorphic curve $C_1$ in $A$ ($C_2$ in $B$) through $(s_0,s_0)$. Choose a $J|_{C_1}$-holomorphic diffeomorphism $a_1$ of $C_1$ onto $S^2$ such that $a_1(s_0,s_0)=s_0$, and a $J|_{C_2}$-holomorphic diffeomorphism $a_2$ of $C_2$ onto $S^2$ such that $a_2(s_0,s_0)=s_0$.  
Now, send $S^2 \times \{s\}$ to the unique curve in $A$ that intersects $C_2$ in $v$ such that $a_2(v)=s$, and send $\{s\} \times S^2$ to the unique curve in $B$ that intersects $C_1$ in $v$ such that $a_1(v)=s$. This is a well defined and one-to-one map; it maps the two  families $\{s\} \times S^2$, $S^2 \times \{s\}$, $s \in S^2$ to two families
of
 $J$-holomorphic spheres such that each member of one family intersects each member of the other exactly once and transversally. 
This induces an embedding of $\Striv (S^2 \times S^2)$ into the shadows structure $\s_{(S^2 \times S^2,J)}$; (a point $(r,t)$ is sent to the intersection point of the $J$-holomorphic sphere in $A$ that is the image of $S^2 \times \{t\}$ and the $J$-holomorphic sphere in $B$ that is the image of $\{r\}  \times S^2$, $S^2 \times S^2$ is sent to $S^2 \times S^2$, each diagonal is sent to itself).

\end{proof}

\begin{Remark}
The above embedding is not onto: for almost complex structures in $\J(S^2 \times S^2,\omega_0 \oplus \omega_0)$, there are $J$-holomorphic curves in $S^2 \times S^2$ in homology classes other than $[S^2 \times \{\pt\}]$ or $[\{\pt\} \times S^2]$, e.g., in $[\{s,s\}_{s \in S^2}]$.

\end{Remark}

Similar arguments  to the ones in Claim \ref{clemb} give the following claim. See \cite[Lemma 4.2, Lemma 4.6]{McDuff-Structure}.
\begin{Claim}
Let $\omega$ be a symplectic form on $S^2 \times S^2$ such that there exist
symplectically embedded spheres in $A=[S^2 \times \pt]$ and $B=[\pt \times S^2]$ that intersect exactly once and transversally.      
Then $\Striv (S^2 \times S^2)$ can be embedded into the shadows structure $\s_{(S^2 \times S^2,J)}$ for every
$J \in \J(S^2 \times S^2,\omega)$. 
 \end{Claim}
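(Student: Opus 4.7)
The plan is to reduce the claim to the two geometric ingredients that drove the proof of Claim~\ref{clemb}: namely, (i)~for every $J\in\J(S^{2}\times S^{2},\omega)$ and every point $p\in S^{2}\times S^{2}$ there is a \emph{unique} embedded $J$-holomorphic sphere through $p$ in each of the classes $A$ and $B$, and (ii)~any such pair of spheres intersects exactly once and transversally. Once (i) and (ii) are in place, the one-to-one map from $\Striv(S^{2}\times S^{2})$ into $\s_{(S^{2}\times S^{2},J)}$ is built verbatim as in Claim~\ref{clemb}: fix a base point $(s_{0},s_{0})$, normalize the unique $J$-holomorphic spheres $C_{1}\in A$ and $C_{2}\in B$ through it by $J$-holomorphic parametrizations $a_{i}\colon C_{i}\to S^{2}$ with $a_{i}(s_{0},s_{0})=s_{0}$, and send $\{s\}\times S^{2}$ (resp.\ $S^{2}\times\{s\}$) to the unique $J$-holomorphic $B$-sphere (resp.\ $A$-sphere) meeting $C_{1}$ (resp.\ $C_{2}$) at the preimage of $s$.

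To start an open--closed argument on $\J(S^{2}\times S^{2},\omega)$, I would first produce a distinguished $J_{0}\in\J(S^{2}\times S^{2},\omega)$ for which both of the hypothesized symplectically embedded spheres in $A$ and $B$ are $J_{0}$-holomorphic. This is a standard extension lemma for a transversally intersecting pair of symplectic surfaces: pick an $\omega$-tame almost complex structure on a tubular neighbourhood of $\Sigma_{A}\cup\Sigma_{B}$ that preserves their tangent spaces at the intersection point and along each surface, then extend to the whole of $S^{2}\times S^{2}$ using convexity of the space of $\omega$-tame structures.

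The main obstacle, and the only substantive departure from the product case, is the indecomposability of the classes $A$ and $B$ for a \emph{general} $\omega$. For $\omega_{0}\oplus\omega_{0}$ this followed at once from minimality of symplectic area, but no such comparison is available here. Instead, if $A=\sum_{i}A_{i}$ were a decomposition into classes represented by non-constant $J$-holomorphic curves, then $A\cdot A=0$ and $A\cdot B=1$ combined with positivity of intersections in almost-complex $4$-manifolds, the adjunction inequality for simple $J$-holomorphic spheres, and $c_{1}(A)=2$ would yield a contradiction as in \cite[Lemma~4.2]{McDuff-Structure}. The class $B$ is handled symmetrically. This is the step I expect to be the most delicate to write out in detail.

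With $J_{0}$ available and $A$, $B$ indecomposable, the argument of Claim~\ref{wdf} applies essentially verbatim. The set of $J\in\J(S^{2}\times S^{2},\omega)$ for which every point lies on an embedded $J$-holomorphic $A$-sphere is nonempty (it contains $J_{0}$); open, by the Hofer--Lizan--Sikorav regularity criterion (applicable since $c_{1}(A)=2\geq 1$) and the implicit function theorem; and closed, by Gromov compactness combined with the indecomposability of $A$. Since $\J(S^{2}\times S^{2},\omega)$ is path-connected, this set exhausts $\J(S^{2}\times S^{2},\omega)$. Uniqueness of the $J$-holomorphic $A$-sphere through a point follows from positivity of intersections together with $A\cdot A=0$, and the once-transverse intersection of $A$-spheres with $B$-spheres follows from $A\cdot B=1$. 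Substituting (i) and (ii) into the construction of Claim~\ref{clemb} yields the embedding of $\Striv(S^{2}\times S^{2})$ into $\s_{(S^{2}\times S^{2},J)}$.
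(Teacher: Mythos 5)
Your overall architecture is exactly the one the paper intends: the paper's own ``proof'' is a one-line deferral to the arguments of Claim~\ref{clemb} (hence Claim~\ref{wdf}) and to McDuff's Lemmas~4.2 and~4.6, and your steps --- constructing a starting $J_0$ that makes the two given symplectic spheres holomorphic, running the open--closed argument on the connected space $\J(S^2\times S^2,\omega)$ via Hofer--Lizan--Sikorav regularity and Gromov compactness, and deducing uniqueness and the once-transverse intersection pattern from positivity of intersections with $A\cdot A=B\cdot B=0$ and $A\cdot B=1$ --- are the right skeleton. The construction of the embedding itself, and the openness, uniqueness, and intersection steps, are fine.

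The genuine gap is precisely at the step you flagged: indecomposability of $A$ and $B$ for a general $\omega$ satisfying the hypothesis. The contradiction you propose does not materialize, because the decomposition $A=(A-B)+B$ is consistent with everything you invoke: $c_1(A-B)=0$, $(A-B)^2=-2$, so the virtual genus of $A-B$ is $1+\tfrac12\bigl((A-B)^2-c_1(A-B)\bigr)=0$ and the adjunction inequality permits an \emph{embedded} $J$-holomorphic $(-2)$-sphere in this class, while positivity of intersections raises no objection. The hypothesis of the Claim does not exclude this: a product form with $\omega(A)>\omega(B)$ has symplectically embedded spheres in both classes meeting once transversally, yet then $A-B$ has positive symplectic area and there do exist $\omega$-tame $J$ (e.g.\ an integrable structure modeled on the Hirzebruch surface $\mathbb{F}_2$) carrying an embedded $J$-holomorphic sphere $E$ in class $A-B$. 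For such $J$ the class $A$ is not $J$-indecomposable, the closedness half of your open--closed argument breaks down, and indeed no embedded $J$-holomorphic $A$-sphere can pass through any point of $E$, since $A\cdot(A-B)=-1<0$ would violate positivity of intersections. So your argument is complete only when $\omega(A)=\omega(B)$ (where the area comparison does give indecomposability of both classes); in the unequal-area case one must either restrict attention to the smaller-area class and work with cusp curves as in McDuff's Lemma~4.2, or add a hypothesis --- and this difficulty is inherited by the Claim as stated, not introduced by you.
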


\end{Example}
\eoe

\begin{Example}
We denote by $\omega_{\FS}$ the Fubini-study form on $\tCP$.

Denote by $\Striv (\tCP)$ the structure generated by finite unions and Cartesian products from 
\begin{enumerate}
\item \label{2a} the points of $\tCP$, with dimension assigned $0$,
\item \label{2b} the set $\tCP$, with dimension assigned $2$,
\item \label{2c} a family $\F=\{C(p_0,p)\}_{p \in \oCP}$, where $p_0$ is a fixed point in $\tCP$ and $p$ is a point on a $\oCP$-line $L$ in $\tCP$ such that $p_0$ is not on $L$,  of (spheres) $C(p_0,p)$ in $\tCP$ such that for $p \neq q$, the intersection
$C(p_0,p) \cap C(p_0,q)$ is the point $p_0 \in \tCP$; each $C(p_0,p)$ is assigned dimension $1$, 
\item  \label{2d} diagonals $\Delta^{n}_{i_1, \ldots, i_k}$ in $(\tCP)^n$, with dimension assigned $2(n-k+1)$.
\end{enumerate}  
Then $\Striv (\tCP)$ is a Zariski structure, embedded in the holomorphic shadows structure $\s_{(\tCP, J_0 \oplus J_0)}$, where $J_0$ is the standard complex structure on $\tCP$. 

\begin{Claim}
Let an almost complex structure $J$ on $\tCP$ be tamed by the standard symplectic form $\omega_{\FS}$ on $\tCP$. Then there is a $J$-holomorphic sphere $C \subset \tCP$ through two given points $v$ and $v'$ in $\tCP$ which  is homologous to the projective line $\CP^{1} \subset \tCP$. Since the algebraic intersection number between two such spheres equals one, any two of them, say $C$ and $C'$ in $\tCP$, necessarily meet at a single point, say at $v \in C \cap C'$, unless $C=C'$. Furthermore, the spheres $C$ and $C'$ are regular at $v$ and meet transversally. Hence, $C$ is regular at all points $v \in C$ and is uniquely determined by $v$ and $v' \neq v$. Moreover, $C=C(v,v')$ smoothly depends on $(v,v')$.
\end{Claim}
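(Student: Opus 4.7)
The plan is to replicate the strategy used in Claim \ref{wdf}, now with the class $L=[\mathbb{CP}^1]\in H_2(\mathbb{CP}^2;\mathbb{Z})$, exploiting the two numerical facts $L\cdot L=1$ and $c_1(L)=3$. First, for the standard integrable $J_0$, any two points of $\mathbb{CP}^2$ lie on a unique projective line, which is an embedded $J_0$-holomorphic sphere in $L$. This pins down a starting point $(f_0,J_0)\in\M(L,\mathbb{CP}^1,\J)$, where $\J=\J(\mathbb{CP}^2,\omega_{\FS})$, which is path-connected.

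Next I would show indecomposability of $L$: as $L$ generates $H_2(\mathbb{CP}^2;\mathbb{Z})\cong\mathbb{Z}$ and realizes the minimal positive $\omega_{\FS}$-area, it cannot be written as a sum of classes each with positive $\omega_{\FS}$-area, hence (by the remark in the excerpt) it is indecomposable. Consequently every $J$-holomorphic sphere $f\colon S^2\to\mathbb{CP}^2$ in $L$ is simple for every $\omega_{\FS}$-tame $J$. Applying the adjunction inequality yields
\[
L\cdot L-c_1(L)+2=1-3+2=0,
\]
so equality holds and every such $f$ is an embedding. Since $c_1(L)=3\geq 1$, the Hofer--Lizan--Sikorav criterion makes every $(f,J)\in\M(L,S^2,\J)$ a regular point of $p_L$. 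Then, exactly as in Claim \ref{wdf}, combining the implicit function theorem (openness) with Gromov compactness available thanks to indecomposability (closedness), the set of $J\in\J$ for which there exists an embedded $J$-holomorphic sphere in $L$ through two prescribed points $v,v'$ is open and closed in the connected space $\J$; since $J_0$ lies in it, it equals $\J$.

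For uniqueness, given two distinct $J$-holomorphic spheres $C,C'\subset\mathbb{CP}^2$ in $L$ that share a point, positivity of intersections in an almost complex four-manifold forces $[C]\cdot[C']\geq 1$, with strict inequality $\geq 2$ unless the intersection consists of a single transverse point. Since $L\cdot L=1$, the intersection must be a single transverse point, ruling out a second shared point and thus establishing that $C(v,v')$ is uniquely determined by the pair $(v,v')$ with $v\neq v'$; the same transversality argument shows that $C$ is regular at each of its points. Finally, smooth dependence of $C(v,v')$ on $(v,v')$ follows from the $p_L$-regularity of every $(f,J)$ together with the implicit function theorem applied to the evaluation map $\M(L,S^2,J)/\mathrm{Aut}(\mathbb{CP}^1)\times\mathbb{CP}^2\times\mathbb{CP}^2\ni([f],v,v')\mapsto(f(0)-v,f(\infty)-v')$, which is transverse to zero. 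The main technical obstacle is ensuring the openness/closedness step survives without excluding bubbling in the Gromov limit, which is precisely what indecomposability of $L$ prevents; the rest is a direct transcription of the $S^2\times S^2$ argument.
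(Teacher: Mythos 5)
Your proposal is correct in outline, but note that the paper does not actually prove this claim: its entire ``proof'' is the citation ``This is [Gr, 2.4 A],'' deferring to Gromov. What you have done is transcribe the paper's own proof of Claim \ref{wdf} (the $S^2\times S^2$ case) to $\tCP$ with $L=[\CP^1]$, which is indeed the standard Gromov--McDuff argument and is what underlies the cited result: $L$ is primitive and of minimal area, hence indecomposable and every representative is simple; adjunction with $L\cdot L - c_1(L)+2 = 1-3+2 = 0$ gives embeddedness; $c_1(L)=3\geq 1$ gives automatic (Hofer--Lizan--Sikorav) regularity; indecomposability rules out bubbling in the Gromov limit; and positivity of intersections with $L\cdot L=1$ gives uniqueness and transversality. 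The one place where the transcription needs slightly more care than the one-point case is the incidence condition: for fixed $(v,v')$ the openness of ``there is a $J$-sphere in $L$ through $v$ and $v'$'' in $J$ is not a direct consequence of $p_L$-regularity alone; you need the evaluation map $\M(L,S^2,J)\times_{\mathrm{PSL}(2,\C)}(S^2)^2\to(\tCP)^2$ (a proper map between manifolds of equal dimension $10+4-6=8$) to have well-defined degree, computed to be $1$ at $J_0$ and invariant along paths of tame structures by automatic regularity; surjectivity and uniqueness then both follow from degree $1$ together with positivity of intersections. You gesture at this evaluation-map argument at the end, so the gap is one of arrangement rather than substance; the expression $f(0)-v$ should of course be read as the graph condition $f(0)=v$, since subtraction is undefined on $\tCP$.
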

This is \cite[2.4 A]{gromovcurves}.
In the language of shadows structures this claim has the following translation.
\begin{Claim}
For every  $J \in \J(\tCP, \omega_{\FS})$, the structure $\Striv (\tCP)$ is embedded into $\s_{(\tCP,J)}$.
\end{Claim}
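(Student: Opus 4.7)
My plan is to mirror the construction in the proof of Claim \ref{clemb}. Fix $J \in \J(\tCP, \omega_{\FS})$, and let $p_0 \in \tCP$ and $L_0 \subset \tCP$ be the base point and the reference $\oCP$-line (with $p_0 \notin L_0$) used in item (\ref{2c}) of the definition of $\Striv(\tCP)$. The geometric content is already supplied by the preceding Gromov claim \cite[2.4~A]{gromovcurves}: through any two distinct points of $\tCP$ there passes a unique embedded $J$-holomorphic sphere in the class $[\oCP]$, any two such spheres meet in a single transverse point, and the family depends smoothly on the pair.

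First I would pick, via the Gromov claim, any $J$-holomorphic sphere $L_0^J \subset \tCP$ in the class $[\oCP]$ with $p_0 \notin L_0^J$; such a sphere exists since through two generic points away from $p_0$ there is a unique one, and this one generically misses $p_0$ (or we may choose the two points so it does). Then fix a $J|_{L_0^J}$-holomorphic diffeomorphism $a \colon L_0^J \to L_0$; one exists because both $L_0^J$ and $L_0$ are two-spheres equipped with integrable almost complex structures in the sense of the induced structure on an embedded $J$-curve, hence biholomorphic to $\oCP$.

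Next, define the embedding $\Phi$ on generators of $\Striv(\tCP)$ by: sending each point $q \in \tCP$ to itself, the ambient $\tCP$ to itself, each diagonal $\Delta^n_{i_1,\ldots,i_k}$ to itself, and each sphere $C(p_0, p) \in \F$ to the unique $J$-holomorphic sphere in $[\oCP]$ through $p_0$ and $a^{-1}(p)$. The map is well-defined and one-to-one: for $p \neq q$ in $L_0$, the images $\Phi(C(p_0,p))$ and $\Phi(C(p_0,q))$ are distinct $J$-holomorphic spheres (uniquely determined by two of their points), both passing through $p_0$, and by the Gromov claim they meet in exactly one transverse point, which must therefore be $p_0$. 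Each image sphere is a holomorphic shadow since it is the image of $\oCP$ under a $J$-holomorphic map, and the remaining closed sets are already in $\s_{(\tCP, J)}$ by definition. Extending $\Phi$ to finite unions and Cartesian products is automatic.

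To finish, one checks that $\Phi$ preserves the subset relation, commutes with coordinate projections and Cartesian products, and preserves the assigned dimensions: points (dim $0$), spheres $C(p_0,p)$ (dim $1$), $\tCP$ (dim $2$), and diagonals $\Delta^n_{i_1,\ldots,i_k}$ (dim $2(n-k+1)$) on both sides. The one substantive step is the well-definedness and injectivity of the parametrization of the family, which rests entirely on the existence-uniqueness-and-transverse-single-intersection statement from the preceding Gromov claim; once that is in hand, the rest of the embedding is a formal translation of the construction in Claim \ref{clemb}.
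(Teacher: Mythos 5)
Your proposal is correct and is essentially the paper's intended argument: the paper states this claim as a direct translation of Gromov's result \cite[2.4~A]{gromovcurves} and omits the proof, expecting exactly the adaptation of the construction in Claim~\ref{clemb} that you carry out (reference $J$-line $L_0^J$ missing $p_0$, a biholomorphism $a\colon L_0^J \to L_0$, and the unique $J$-sphere through $p_0$ and $a^{-1}(p)$ as the image of $C(p_0,p)$). The only spot worth tightening is the existence of $L_0^J$ with $p_0 \notin L_0^J$: rather than appealing to genericity, note that if $C_v$ is the unique $J$-sphere through $p_0$ and $v$, then for any $v' \notin C_v$ the sphere $C(v,v')$ cannot contain $p_0$, since otherwise uniqueness would force $C(v,v') = C_v$.
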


\end{Example}
\eoe

\newpage

\begin{thebibliography}{moosa2}




\bibitem[Ca]{calabi}
E.\ Calabi, \emph{Constructions and properties of some $6$-dimensional almost complex manifolds}, 
Trans.\ Amer.\ Math.\ Soc.\ {\bf 87}, 407--438.


\bibitem[Gr]{gromovcurves}
M.\ Gromov, {\em Pseudo holomorphic curves in symplectic manifolds}, 
Inv. Math. {\bf 82} (1985), 307--347.  


\bibitem[GR]{rossi}
R.\ C.\ Gunning and H. Rossi, {\em Analytic functions of several
complex variables}, Prentice-Hall Inc., New Jersey, 1965.


\bibitem[Ha]{ha2}
R.\ Hardt, {\em Topological properties of subanalytic sets}, 
Trans.\ Amer.\ Soc.\ {\bf 211} (1975), 57--70

\bibitem[Hi]{hiro}
H.\ Hironaka, \emph{Resolution of singularities of an algebraic variety over a field of characteristic zero}, 
Ann.\ of Math. (2) {\bf 79} (1964), 109--326.

\bibitem[HLS]{HLS}
H.\ Hofer, V.\ Lizan, and J-C.\ Sikorav,
\emph{On genericity for holomorphic curves in four-dimensional 
almost-complex manifolds}, J.\ Geom.\ Anal.\ \textbf{7} (1997), 
no.~1, 149--159.

\bibitem[HZ1]{zilber}
E.\ Hrushovski and B.\ Zilber, {\em Zariski geometries}, 
Amer.\ Math.\ Soc.\ {\bf 28}(1993), no. 2, 315--323.

\bibitem[HZ2]{ezilber}
E.\ Hrushovski and B.\ Zilber, \emph{Zariski geometries}, 
Amer.\ Math.\ Soc.\ {\bf 9}(1996), no.\ 1, 1--56.



\bibitem[McD]{McDuff-Structure} 
D. McDuff, \emph{The structure of rational and ruled symplectic
$4$-manifolds}, J.\ Amer.\ Math.\ Soc.\ \textbf{3} (1990), no.~3, 679--712.
MR1049697


\bibitem[MS1]{MS:intro}
D.\ McDuff and D.\ Salamon,
\emph{Introduction to symplectic topology}, 
Second Edition, Oxford University Press, 1998.

\bibitem[MS2]{MS:JCurves}
D.\ McDuff and D.\ Salamon, \emph{J-holomorphic curves and symplectic 
topology}, Amer.\ Math.\ Soc.\ 2004.



\bibitem[MW]{malta}
M.\ J.\ Micallef and B.\ White, {\em The structure of branch points in minimal surfaces and in pseudoholomorphic curves}, 
Annals of
Math. {\bf 139}(1994), 35--85.

\bibitem[Mo1]{moosa}
R.\ N.\ Moosa, {\em The model theory of compact complex spaces}. 

\bibitem[Mo2]{moosa2}
R.\ N.\ Moosa, {\em Contributions to the model theory of fields and compact complex spaces, Ph.D thesis}, 
Univ.\ of Illinois, Urbana-Champaign, 2001. 






\bibitem[Mu]{mum}
D.\ Mumford, {\em Algebraic geometry I, complex projective varieties}, Springer-Verlag 1976.


 
\bibitem[Zi1]{imam}
B.\ Zilber, {\em Model theory and algebraic geometry}, 
Proceedings of 10th Easter conference at Berlin, 1993.

\bibitem[Zi2]{zbook}
B.\ Zilber, {\em Zariski Geometries}, http://people.maths.ox.ac.uk/~zilber/

\bibitem[ZP]{imam1}
B.\ Zilber and Y.\ Peterzil, {\em Lecture notes on Zariski-type structure}, preprint, 1994.



\end{thebibliography}

\end{document}